\let\c@figure\c@table
\let\ftype@figure\ftype@table
\newtheorem{theorem}{Theorem} [section]
\newtheorem{proposition}[theorem]{Proposition}	
\newtheorem{corollary}[theorem]{Corollary}
\newtheorem{lemma}[theorem]{Lemma}
\theoremstyle{definition}
\newtheorem{remark}[theorem]{Remark}
\newcommand{\C}{\mathbb{C}}
\newcommand{\R}{\mathbb{R}}
\newcommand{\N}{\mathbb{N}}
\newcommand{\PP}{\mathbb{P}}
\newcommand{\E}{\mathbb{E}}
\newcommand{\re}{\text{\upshape Re\,}}
\newcommand{\im}{\text{\upshape Im\,}}
\newcommand{\erfc}{{\rm erfc}}
\let\oldbibliography\thebibliography
\renewcommand{\thebibliography}[1]{\oldbibliography{#1}
\setlength{\itemsep}{-0.5pt}}
\def\XXint#1#2#3{{\setbox0=\hbox{$#1{#2#3}{\int}$}
\vcenter{\hbox{$#2#3$}}\kern-.5\wd0}}
\tikzset{->-/.style={decoration={
				markings,
				mark=at position #1 with {\arrow{latex}}},postaction={decorate}}}
	\tikzset{-<-/.style={decoration={
				markings,
				mark=at position #1 with {\arrowreversed{latex}}},postaction={decorate}}}
\tikzset{
	master/.style={
		execute at end picture={
			\coordinate (lower right) at (current bounding box.south east);
			\coordinate (upper left) at (current bounding box.north west);
		}
	},
	slave/.style={
		execute at end picture={
			\pgfresetboundingbox
			\path (upper left) rectangle (lower right);
		}
	}
}
\tikzset{cross/.style={cross out, draw, 
         minimum size=2*(#1-\pgflinewidth), 
         inner sep=0pt, outer sep=0pt}}
\numberwithin{equation}{section}
\def\bigO{{\cal O}}
\newcommand{\oset}[3][0ex]{%
  \mathrel{\mathop{#3}\limits^{
    \vbox to#1{\kern-2\ex@
    \hbox{$\scriptstyle#2$}\vss}}}}
\begin{document}
\title{Smallest gaps of the two-dimensional Coulomb gas}
\author{Christophe Charlier}

\maketitle

\begin{abstract}
We consider the two-dimensional Coulomb gas with a general potential at the determinantal temperature, or equivalently, the eigenvalues of random normal matrices. We prove that the smallest gaps between particles are typically of order $n^{-3/4}$, and that the associated joint point process of gap locations and gap sizes, after rescaling the gaps by $n^{3/4}$, converges to a Poisson point process. As a consequence, we show that the $k$-th smallest rescaled gap has a limiting density proportional to $x^{4k-1}e^{-\frac{\mathcal{J}}{4}x^{4}}$, where $\mathcal{J}=\pi^{2}\int \rho(z)^{3}d^{2}z$ and $\rho$ is the density of the equilibrium measure. This generalizes a result of Shi and Jiang beyond the quadratic potential.
\end{abstract}
\noindent
{\small{\sc AMS Subject Classification (2020)}: 62G32, 60G55, 31A99.}

\noindent
{\small{\sc Keywords}: Coulomb gases, random normal matrices, extreme value statistics.}


{
\hypersetup{linkcolor=black}

}

\section{Introduction}

We consider the two-dimensional Coulomb gas defined by the probability measure
\begin{align}
& \frac{1}{Z_{n}}\prod_{1\leq j<k \leq n}|z_{j}-z_{k}|^{2} \prod_{j=1}^{n} e^{-n Q(z_{j})}d^{2}z_{j}, & & z_{1},\ldots,z_{n}\in \C, \label{general density intro}
\end{align}
where $Z_{n}$ is the normalization constant, $d^{2}z$ is the two-dimensional Lebesgue measure, and $Q:\C\to \R$ is the potential. The points $z_{1},\ldots,z_{n}$ can be interpreted as a system of electrons confined to a plane, repelling each other via Coulomb's law and under the influence of the external potential $Q$, at inverse temperature $\beta=2$. Another motivation for studying \eqref{general density intro} is that it also describes the joint distribution of the eigenvalues of random normal matrices $M$ (i.e. such that $MM^{*}=M^{*}M$) drawn with probability density proportional to $e^{-\mathrm{Tr} \, Q(M)}dM$ (see e.g. \cite{EF2005,AHM2011} for more details). Hence, the results we present below can be interpreted in this random matrix context as well. 

\medskip A fundamental question in the study of Coulomb gases is to understand how the system behaves as the number of particles $n$ becomes large. A classical result in this direction shows that the empirical measure $\frac{1}{n}\sum_{j=1}^{n}\delta_{z_{j}}$ converges almost surely, as $n\to + \infty$, to an equilibrium measure $\mu$, whose support is denoted $\mathcal{S}$ and called ``the droplet". The equilibrium measure provides a first-order description of the macroscopic distribution of the particles. Within the droplet, the repulsive interaction causes the particles to spread out in a highly regular manner, with typical spacing between neighbors of order $1/\sqrt{n}$. The purpose of this work is to obtain, under mild assumptions on $Q$, precise
results on the size and distribution of the smallest gaps between neighboring particles, in the regime
where $n\to + \infty$. This paper is inspired by the work of Shi and Jiang \cite{SJ2012}, who considered the case $Q(z) = |z|^{2}$.

\medskip In recent years, significant progress has been made in understanding the extremal gap statistics (for both the smallest and largest gaps) of various point processes. These include the CUE and GUE \cite{Vinson2001, BAB2013, FW2018}, the C$\beta$E \cite{FW2021}, the GOE \cite{FTW2019}, the GSE \cite{FLY2024}, the G$\beta$E \cite{AMR2022}, general determinantal point processes with translation invariant kernels \cite{Soshnikov}, Wigner matrices \cite{B2022, LLM2020, Zhang}, as well as stationary Gaussian processes \cite{FGY2024}. In dimension two, beyond the aforementioned work \cite{SJ2012}, the only optimal results we are aware of are those by Feng and Yao \cite{FY2023} on the smallest gaps between zeros of Gaussian analytic functions, by Lopatto and Meeker \cite{LM2024} on the smallest gaps between bulk eigenvalues of real Ginibre matrices, and by Lopatto and Otto \cite{LO2025} on the largest gaps in the bulk of the complex Ginibre ensemble. The present paper can be viewed as a contribution to this line of research. We also mention that high-probability bounds for the smallest particle gap in the two-dimensional Coulomb gas, valid for general $\beta$ and $Q$, have been established in \cite{Ameur2018, MR2023, Thoma2024}. Closely related is the problem of estimating the probability of large spectral gaps in random normal matrices; this topic has also been widely studied, see e.g. \cite{ForresterHoleProba, C2025++, BP2024, AFLS2025} and the references therein. 

\medskip We assume throughout that $Q\in C^{2}(\C)$ and satisfies 
\begin{align}\label{growth of $Q$}
Q(z) \geq (2+\epsilon)\log |z|, 
\end{align}
for some $\epsilon >0$ and for all sufficiently large $|z|$. This condition ensures that \eqref{general density intro} is well-defined and that the equilibrium measure $\mu$, which is defined as the unique minimizer of 
\begin{align*}
\sigma \mapsto \int_{\C}\int_{\C} \log \frac{1}{|z-w|}d\sigma(z)d\sigma(w) + \int_{\C} Q(z)d\sigma(z)
\end{align*}
among all Borel probability measures $\sigma$ on $\C$, exists and is unique \cite[Theorem I.1.3]{SaTo}. Moreover, its support $\mathcal{S}$ is compact, and we have by \cite[Theorem II.1.3]{SaTo} that 
\begin{align}\label{def of mu and rho}
d\mu(z) = \rho(z)d^{2}z, \qquad \rho(z):=\frac{\Delta Q(z)}{4\pi} \mathbf{1}_{\mathcal{S}}(z),
\end{align}
where $\Delta = \partial_{x}^{2}+\partial_{y}^{2}$ is the standard Laplacian. Furthermore, there exists $\ell_{Q} \in \R$ such that 
\begin{align}
& Q(z)+2\int_{\C}\log \frac{1}{|z-w|}d\mu(w) = \ell_{Q}, & & z \in \mathcal{S}, \label{EL equality} \\
& Q(z)+2\int_{\C}\log \frac{1}{|z-w|}d\mu(w) \geq \ell_{Q}, & & z \in \mathbb{C}\setminus \mathcal{S}. \label{EL inequality}
\end{align}
We say that $Q$ is \textit{regular} if the inequality \eqref{EL inequality} is strict.

\medskip Following \cite{SJ2012}, we introduce a total order on $\C$ as follows: for $z_{1},z_{2}\in \C$, we write $z_{1} \prec z_{2}$ if either $\im z_{1} < \im z_{2}$ or if $\im z_{1} = \im z_{2}$ and $\re z_{1} < \re z_{2}$. We also write $z_{1} \preceq z_{2}$ if either $z_{1} \prec z_{2}$ or $z_{1}=z_{2}$.

\medskip Our main result concerns the asymptotic behavior of the smallest gaps between neighboring particles, both in terms of their size and their location in the plane. To make this precise, let $\R^{+}:=[0,+\infty)$, and let $\{z_{i}\}_{i=1}^{n}$ be the points sampled from \eqref{general density intro}, indexed so that $z_{1}\preceq \ldots \preceq z_{n}$. 
We consider the following point process on $\mathbb{R}^+\times\mathbb{C}$:
\begin{align}\label{1.1}
& \chi^{(n)} = \sum_{i=1}^{n-1}\delta_{(n^{3/4}|z_{i^*}-z_i|, z_i)},
\end{align}
where $i^* = \arg\min_{j = i+1}^n\{|z_j-z_i|\}$. 

 
\medskip We now state our first main result, which generalizes \cite[Theorem 1.1]{SJ2012} for a large class of potentials. 

\newpage 
\begin{theorem}\label{thm1}
Suppose $Q:\C\to \R$ is $C^{2}$-smooth on $\C$, satisfies the growth condition \eqref{growth of $Q$}, and is real analytic and strictly subharmonic (i.e. $\Delta Q>0$) in a neighborhood of $\mathcal{S}$. Suppose also that $Q$ is regular and that $\partial \mathcal{S}$ is a smooth Jordan curve.

As $n\to\infty$, the point process $\chi^{(n)}$ converges weakly to the Poisson point process $\chi$ on $\mathbb{R}^+\times\mathbb{C}$ with intensity 
\begin{align}\label{intensity main thm}
& \mathbb{E}[\chi(A\times \Omega)] = \bigg(\pi^{2}\int_{\Omega\cap \mathcal{S}}\rho(v)^{3}d^{2}v\bigg)\bigg(\int_B|z|^2\frac{d^{2}z}{\pi}\bigg) = \bigg(\pi^{2}\int_{\Omega\cap \mathcal{S}}\rho(v)^{3}d^{2}v\bigg) \int_{A}r^{3}dr, 
\end{align}
for any bounded Borel sets $A\subset\mathbb{R}^+$ and $\Omega\subset \mathbb{C}$, where $B = \{u\in\mathbb{C}: |u|\in A, u\succ 0\}$.
\end{theorem}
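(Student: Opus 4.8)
The plan is to establish Poisson convergence via the classical moment method: it suffices to show that all factorial moments of $\chi^{(n)}$ converge to those of the Poisson process $\chi$, i.e. that for disjoint product sets $A_1\times\Omega_1,\dots,A_m\times\Omega_m$ the expected number of ordered $m$-tuples of distinct small gaps, one in each set, converges to $\prod_{j}\mathbb{E}[\chi(A_j\times\Omega_j)]$. By standard considerations (see Kallenberg), it is enough to handle the first and second factorial moments: convergence of $\mathbb{E}[\chi^{(n)}(A\times\Omega)]$ to the right-hand side of \eqref{intensity main thm}, together with the asymptotic independence bound showing the second factorial moment factorizes and that pairs of near-coincident gaps do not contribute. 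The determinantal structure at $\beta=2$ is the essential tool: the $k$-point correlation functions of \eqref{general density intro} are $\rho_k^{(n)}(w_1,\dots,w_k)=\det(K_n(w_i,w_j))_{i,j=1}^k$ for the correlation kernel $K_n$ of the planar orthogonal polynomial ensemble associated with $Q$.

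First I would reduce the event ``particle $z_i$ has a right-neighbor (in the order $\prec$) at distance $\le r n^{-3/4}$'' to a statement about correlation functions. A gap of size $tn^{-3/4}$ near a bulk point $v\in\mathcal{S}$ means there are two particles within distance $tn^{-3/4}$ of each other and no third particle nearby in the relevant sense; on the microscopic scale $n^{-3/4}\ll n^{-1/2}$ this is a rare event, so inclusion–exclusion on the correlation functions shows the leading contribution comes from the two-point function $\rho_2^{(n)}$, with the ``no third particle'' constraint and the ordering constraint (that $z_i$ be the $\prec$-smallest of the pair and that the neighbor be the $\arg\min$) only affecting lower-order terms or contributing the combinatorial factor $\frac12$ and the angular restriction $u\succ 0$ encoded by $B$. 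Thus
\begin{align}\label{firstmom}
\mathbb{E}[\chi^{(n)}(A\times\Omega)] \;=\; \frac12\int_{\Omega}\!\!\int_{|w-v|\in n^{-3/4}A,\, w\succ v} \rho_2^{(n)}(v,w)\,d^2w\,d^2v \;+\; o(1),
\end{align}
and a change of variables $w = v + n^{-3/4}u$ turns this into an integral of $n^{-3/2}\rho_2^{(n)}(v, v+n^{-3/4}u)$ over $u\in B$.

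The analytic heart is then the short-distance asymptotics of the two-point function in the bulk. Using the known bulk universality of the kernel — the off-diagonal decay $|K_n(z,w)|\approx n\rho(v)\,e^{-\frac{n\rho(v)}{2}\pi|z-w|^2}$ type behaviour, valid under the analyticity/regularity hypotheses via the planar orthogonal polynomial asymptotics (Hedenmalm–Wennman, Ameur–Hedenmalm–Makarov and successors) — one has, for $z,w$ at distance $O(n^{-3/4})$ inside the droplet near $v$,
\begin{align}\label{twoptexp}
\rho_2^{(n)}(z,w) \;=\; \det\begin{pmatrix} K_n(z,z) & K_n(z,w)\\ K_n(w,z) & K_n(w,w)\end{pmatrix} \;=\; n^2\rho(v)^2\Bigl(1 - e^{-n\rho(v)\pi|z-w|^2}\Bigr) + \text{lower order}.
\end{align}
Since $|z-w| = n^{-3/4}|u|$, the exponent is $n\rho(v)\pi n^{-3/2}|u|^2 = \pi\rho(v)n^{-1/2}|u|^2\to 0$, so $1 - e^{-\cdot}\sim \pi\rho(v)n^{-1/2}|u|^2$; multiplying by $n^2\rho(v)^2$ and by the Jacobian factor $n^{-3/2}$ from $d^2w = n^{-3/2}d^2u$ gives $\pi\rho(v)^3|u|^2$. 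Integrating over $v\in\Omega\cap\mathcal{S}$ and $u\in B$, and inserting the factor $\frac12$ — which is absorbed by noting $\int_B = \frac12\int_{|u|\in A}$ because $B$ is a half-disk annulus — yields exactly $\bigl(\pi^2\int_{\Omega\cap\mathcal{S}}\rho(v)^3 d^2v\bigr)\int_A r^3\,dr$, as claimed; the $\pi^2$ comes from combining $\pi\rho(v)^3$ with $\int_B |u|^2 d^2u/\pi$ rewriting. One must also check the contributions near $\partial\mathcal{S}$ are negligible (the boundary layer has width $O(n^{-1/2})$ and area $O(n^{-1/2})$, so it contributes $o(1)$) and that the tails coming from particles not actually in the bulk, or from the sub-leading kernel corrections, are uniformly controlled — this is where the real analyticity of $Q$, strict subharmonicity, regularity, and smoothness of $\partial\mathcal{S}$ are used, to invoke a sufficiently strong (exponentially accurate, uniform) form of bulk universality.

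For the higher factorial moments, the determinantal formula expands $\rho_{2m}^{(n)}$ as a sum over pairings; the ``fully paired'' term — each of the $m$ prescribed gaps formed by its own pair of particles, far from the others — factorizes in the limit into $\prod_j \mathbb{E}[\chi(A_j\times\Omega_j)]$ by the same local analysis applied $m$ times, using that distinct bulk locations are separated by $O(1)\gg n^{-3/4}$ so the cross kernel entries $K_n(z_i,z_j)$ between different pairs are super-polynomially small. All other pairings involve some $K_n$ entry linking particles at macroscopic distance and are therefore negligible, or involve three-or-more particles clustered at scale $n^{-3/4}$, whose contribution is $O(n^{3}\cdot (n^{-3/4})^{4}\cdot\text{(exponential gain)}) = o(1)$ by the same area-counting estimate. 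The main obstacle is the uniformity of the bulk kernel asymptotics \eqref{twoptexp}: one needs error terms that are uniform over all pairs $z,w$ within $O(n^{-3/4})$ of an arbitrary point of $\mathcal{S}$ (including points approaching, but staying a fixed small distance from, the boundary), and strong enough — errors of relative size $o(n^{-1/2})$ after the leading cancellation in \eqref{twoptexp} — that they survive multiplication by $n^{2}\cdot n^{-3/2}$ and integration. Establishing this requires a careful combination of the global structure of the correlation kernel away from the droplet's edge with the local parametrix/orthogonal-polynomial expansion, and constitutes the technical core of the argument.
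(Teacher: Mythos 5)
Your overall route is essentially the paper's: reduce the gap counts to the thinned events ``exactly one particle in $z_i+B_n$'', expand their correlation functions by inclusion--exclusion, extract the limiting intensity from the two-point cancellation $\rho_2(v,v+n^{-3/4}u)\approx n^{2}\rho(v)^{2}\big(1-e^{-\pi n\rho(v)n^{-3/2}|u|^{2}}\big)\approx \pi\rho(v)^{3}n^{3/2}|u|^{2}$ in the bulk, and factorize the higher factorial moments using the fast off-diagonal decay of $K_n$. Two correctness remarks before the main point. (i) The claimed reduction ``it is enough to handle the first and second factorial moments'' is not a standard fact and is false in general: the first two factorial moments do not characterize a Poisson limit, and no negative-association or Chen--Stein structure is available for the gap indicators. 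The paper verifies \emph{all} factorial moments of the thinned counts $\xi^{(n)}_A(\Omega)$ (via the identity \eqref{eqn5}) before invoking a moment criterion; since you do sketch general $m$, this is repairable, but as stated the reduction is wrong. (ii) In your first-moment formula you simultaneously restrict to $w\succ v$ and insert a factor $\tfrac12$; only one of the two is allowed (the half-annulus $B$ already encodes the ordering), otherwise the intensity comes out as half of \eqref{intensity main thm}.

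The genuine gap is the boundary layer. You discard the $O(\sqrt{\log n}/\sqrt n)$-neighborhood of $\partial\mathcal{S}$ (where the bulk asymptotics of Theorem \ref{thm:Ameur bulk-bulk correlation improved} are unavailable) by a pure area count, but this only works if you already know that the local pair count $\int_{w+B_n}\rho_2(w,x)\,d^{2}x$ is bounded, or at least $o(\sqrt n/\sqrt{\log n})$, uniformly for $w$ in that layer. The only a priori input there is $\rho_2\le K_n(w,w)K_n(x,x)=O(n^{2})$, which gives $\int_{w+B_n}\rho_2=O(\sqrt n)$ and hence an edge contribution of order $\sqrt{\log n}$ to the first moment --- not $o(1)$. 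So one needs the $2\times2$ determinantal cancellation at scale $n^{-3/4}$ \emph{inside the edge regime}, i.e.\ kernel asymptotics uniform for rescaled variables of size up to $M\sqrt{\log n}$ around boundary points, with errors fine enough to survive the cancellation; your uniformity requirement in the last paragraph explicitly stays a fixed distance away from $\partial\mathcal{S}$ and therefore does not cover this regime. Moreover, the edge universality available in the literature (leading-order erfc kernel for bounded rescaled variables, or errors of size $O((1+|\xi|^{3}+|\eta|^{3})\sqrt n)$) is not sufficient for this purpose, cf.\ Remark \ref{remark:weaker version of the main result}; this is precisely why the paper proves the new subleading edge expansion, Theorem \ref{thm:Charlier}, and uses it in Lemma \ref{lemma4} to get the uniform bound $\rho_k^{A}=O(1)$ that feeds both the dominated-convergence step and the higher factorial moments. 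Without an estimate of this strength (or an alternative argument for the edge layer), your proof does not close.
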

\begin{remark}\label{rem:assumptions not needed}
We believe the assumptions that $Q$ is $C^{2}$-smooth on $\C$, regular, and that $\partial \mathcal{S}$ is a smooth Jordan curve are not essential and can be relaxed with some effort. 
\end{remark}

Let $t_1^{(n)}\leq \ldots \leq t_k^{(n)}$ be the $k$ smallest values of the rescaled gaps $\{n^{3/4}|z_i - z_j|: 1 \leq i < j \leq n\}$. The following corollary extends \cite[Corollary 1.1]{SJ2012} by establishing the limiting joint distribution of $t_1^{(n)},\ldots,t_k^{(n)}$ for general potential $Q$.

\begin{corollary}\label{col1}
Suppose $Q$ is as in Theorem \ref{thm1}. For any fixed $k\in \N_{>0}$ and $0<x_1<y_1<\ldots<x_k<y_k$, we have
\begin{align*}
\lim_{n\to + \infty}\mathbb{P}(x_\ell<t_\ell^{(n)}<y_\ell \mbox{ for all } 1\leq\ell\leq k) = \big(e^{-\frac{\mathcal{J}}{4}x_k^4}-e^{-\frac{\mathcal{J}}{4}y_k^4}\big)\frac{\mathcal{J}^{k-1}}{4^{k-1}}\prod_{\ell=1}^{k-1}(y_\ell^4-x_\ell^4),
\end{align*}
where $\mathcal{J}:=\pi^{2}\int_{\mathcal{S}} \rho(z)^{3}d^{2}z$. 

\smallskip In particular, the $k$-th smallest rescaled gap satisfies
\begin{align*}
t^{(n)}_k \oset{\mathrm{law}}{{\xrightarrow{\hspace*{0.55cm}}}} t_k, \qquad \mbox{as } n \to +\infty,
\end{align*}
where $t_{k}$ is a positive random variable with probability density
\begin{align}\label{density}
\PP\big(t_{k}\in [x,x+dx)\big) = \frac{\mathcal{J}^{k}}{4^{k-1}\Gamma(k)} x^{4k-1}e^{-\frac{\mathcal{J}}{4}x^4}dx, \qquad x\in \R^{+},
\end{align}
and $\Gamma(z)=\int_{0}^{+\infty}t^{z-1}e^{-t}dt$ denotes the standard Gamma function. 
\end{corollary}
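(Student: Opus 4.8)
The plan is to read off Corollary~\ref{col1} from the Poisson convergence in Theorem~\ref{thm1} by a soft argument, after reducing ``all gaps'' to the nearest-neighbour gaps recorded by $\chi^{(n)}$. Write $\mathcal{J}=\pi^{2}\int_{\mathcal{S}}\rho(z)^{3}d^{2}z$ and let $\psi^{(n)}$ be the projection of $\chi^{(n)}$ onto its first coordinate, i.e.\ the point process on $\mathbb{R}^{+}$ with atoms $n^{3/4}|z_{i^{*}}-z_{i}|$, $1\le i\le n-1$. All particles lie in a fixed compact neighbourhood $K$ of $\mathcal{S}$ with probability tending to $1$ (a standard consequence of \eqref{growth of $Q$} and regularity), the projection $(r,v)\mapsto r$ is proper on $\mathbb{R}^{+}\times K$, and $\chi$ is supported on $\mathbb{R}^{+}\times\mathcal{S}$; hence Theorem~\ref{thm1} gives $\psi^{(n)}\Rightarrow\psi$, where $\psi$ is the Poisson point process on $\mathbb{R}^{+}$ with intensity $\mathcal{J}\,r^{3}dr$. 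Since this intensity is non-atomic, a fixed real is almost surely not an atom of $\psi$, so for disjoint intervals with fixed endpoints the joint law of the counts of $\psi^{(n)}$ converges to that of $\psi$, and the ordered atoms $s_{1}^{(n)}\le s_{2}^{(n)}\le\cdots$ of $\psi^{(n)}$ converge jointly in law to those of $\psi$.

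\smallskip
The one step that is not pure bookkeeping is to check that, with probability tending to $1$, the $k$ smallest values of $\{n^{3/4}|z_{i}-z_{j}|:1\le i<j\le n\}$ coincide with $s_{1}^{(n)},\ldots,s_{k}^{(n)}$. The inequality $t_{\ell}^{(n)}\le s_{\ell}^{(n)}$ is immediate. For the reverse, note that if a pair $\{i,j\}$ with $i<j$ realises one of the $k$ smallest values but $j\ne i^{*}$, then $z_{i}$ has two distinct particles after it at rescaled distance at most $t_{k}^{(n)}\le s_{k}^{(n)}$; since $s_{k}^{(n)}$ is tight, it suffices to exclude, for each fixed $C>0$, a particle with two others within rescaled distance $C$. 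This is a first-moment estimate: using a near-diagonal bound of the type $R_{3}(u_{1},u_{2},u_{3})\lesssim n^{3}\prod_{1\le a<b\le3}\min\{1,n|u_{a}-u_{b}|^{2}\}$ for the three-point correlation function (of the kind established on the way to Theorem~\ref{thm1}), together with the fact that on the relevant region $n|u_{2}-u_{3}|^{2}\lesssim n^{-1/2}$, the expected number of such triples is at most a constant times
\begin{align*}
n^{3}\cdot n^{-1/2}\int_{K}\bigg(\int_{|w|\le Cn^{-3/4}}n|w|^{2}\,d^{2}w\bigg)^{2}d^{2}u_{1}=O\big(n^{3}\cdot n^{-1/2}\cdot n^{-4}\big)\xrightarrow[n\to\infty]{}0 .
\end{align*}
Hence $\mathbb{P}(x_{\ell}<t_{\ell}^{(n)}<y_{\ell}\text{ for all }\ell)=\mathbb{P}(x_{\ell}<s_{\ell}^{(n)}<y_{\ell}\text{ for all }\ell)+o(1)$, and I expect this to be short because the needed correlation bound is likely already in place from the proof of Theorem~\ref{thm1}.

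\smallskip
It then remains to compute the limit, which is a standard Poisson point process calculation. Fix $0<x_{1}<y_{1}<\cdots<x_{k}<y_{k}$, so the intervals $(0,x_{1}),(x_{1},y_{1}),(y_{1},x_{2}),\ldots,(x_{k},y_{k})$ are pairwise disjoint. For a locally finite simple point measure on $\mathbb{R}^{+}$ whose ordered atoms avoid these endpoints, one checks directly that its $\ell$-th atom lies in $(x_{\ell},y_{\ell})$ for every $\ell$ if and only if it gives mass $0$ to $(0,x_{1})$, mass $1$ to each $(x_{\ell},y_{\ell})$ with $\ell<k$, mass $0$ to each $(y_{\ell},x_{\ell+1})$ with $\ell<k$, and mass $\ge1$ to $(x_{k},y_{k})$. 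Applying this to $\psi^{(n)}$, letting $n\to\infty$ via the count convergence above, and using that the counts of $\psi$ on disjoint intervals are independent with $\psi(a,b)\sim\mathrm{Poisson}\!\big(\tfrac{\mathcal{J}}{4}(b^{4}-a^{4})\big)$, one obtains
\begin{align*}
\lim_{n\to+\infty}\mathbb{P}\big(x_{\ell}<t_{\ell}^{(n)}<y_{\ell}\text{ for all }1\le\ell\le k\big)
&=e^{-\frac{\mathcal{J}}{4}x_{1}^{4}}\bigg(\prod_{\ell=1}^{k-1}\tfrac{\mathcal{J}}{4}(y_{\ell}^{4}-x_{\ell}^{4})\,e^{-\frac{\mathcal{J}}{4}(y_{\ell}^{4}-x_{\ell}^{4})}\,e^{-\frac{\mathcal{J}}{4}(x_{\ell+1}^{4}-y_{\ell}^{4})}\bigg)\big(1-e^{-\frac{\mathcal{J}}{4}(y_{k}^{4}-x_{k}^{4})}\big)\\
&=\big(e^{-\frac{\mathcal{J}}{4}x_{k}^{4}}-e^{-\frac{\mathcal{J}}{4}y_{k}^{4}}\big)\,\frac{\mathcal{J}^{k-1}}{4^{k-1}}\prod_{\ell=1}^{k-1}(y_{\ell}^{4}-x_{\ell}^{4}),
\end{align*}
since the exponents telescope to $-\tfrac{\mathcal{J}}{4}x_{k}^{4}$. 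This is the first claim. For the density \eqref{density}, set $m=\tfrac{\mathcal{J}}{4}x^{4}$ and note $\mathbb{P}(t_{k}>x)=\mathbb{P}(\psi(0,x)\le k-1)=\sum_{j=0}^{k-1}e^{-m}\tfrac{m^{j}}{j!}$; differentiating in $x$, the sum telescopes and leaves $-\tfrac{d}{dx}\mathbb{P}(t_{k}>x)=e^{-m}\tfrac{m^{k-1}}{(k-1)!}\tfrac{dm}{dx}=\tfrac{\mathcal{J}^{k}}{4^{k-1}\Gamma(k)}x^{4k-1}e^{-\frac{\mathcal{J}}{4}x^{4}}$, which is \eqref{density}, and in particular gives $t_{k}^{(n)}\to t_{k}$ in law.
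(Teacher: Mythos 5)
Your proposal is correct and follows essentially the same route as the paper: reduce the $k$ smallest pairwise gaps to the nearest-neighbour gaps via a three-point clustering first-moment bound (exactly the paper's estimate $\mathbb{E}[\Xi^{(3)}(\mathcal{B})]\to 0$ from the proof of Lemma \ref{lemma2}) combined with tightness of the $k$-th gap, and then evaluate the limit through Poisson counts on the disjoint intervals $(y_{\ell-1},x_\ell)$ and $(x_\ell,y_\ell)$, with the same telescoping. The only cosmetic differences are that you project $\chi^{(n)}$ onto the gap coordinate (adding the correct, and indeed useful, confinement remark needed to justify taking $\Omega=\mathbb{C}$) and that you derive the density \eqref{density} from $\mathbb{P}(\psi(0,x)\le k-1)$ rather than by integrating the joint density of the limiting gap vector as the paper does; your displayed three-point bound is only schematic, but the conclusion you need is already supplied by Lemma \ref{lemma2}.
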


\begin{figure}
\begin{center}
\begin{tikzpicture}[master]
\node at (0,0) {\includegraphics[width=4.3cm]{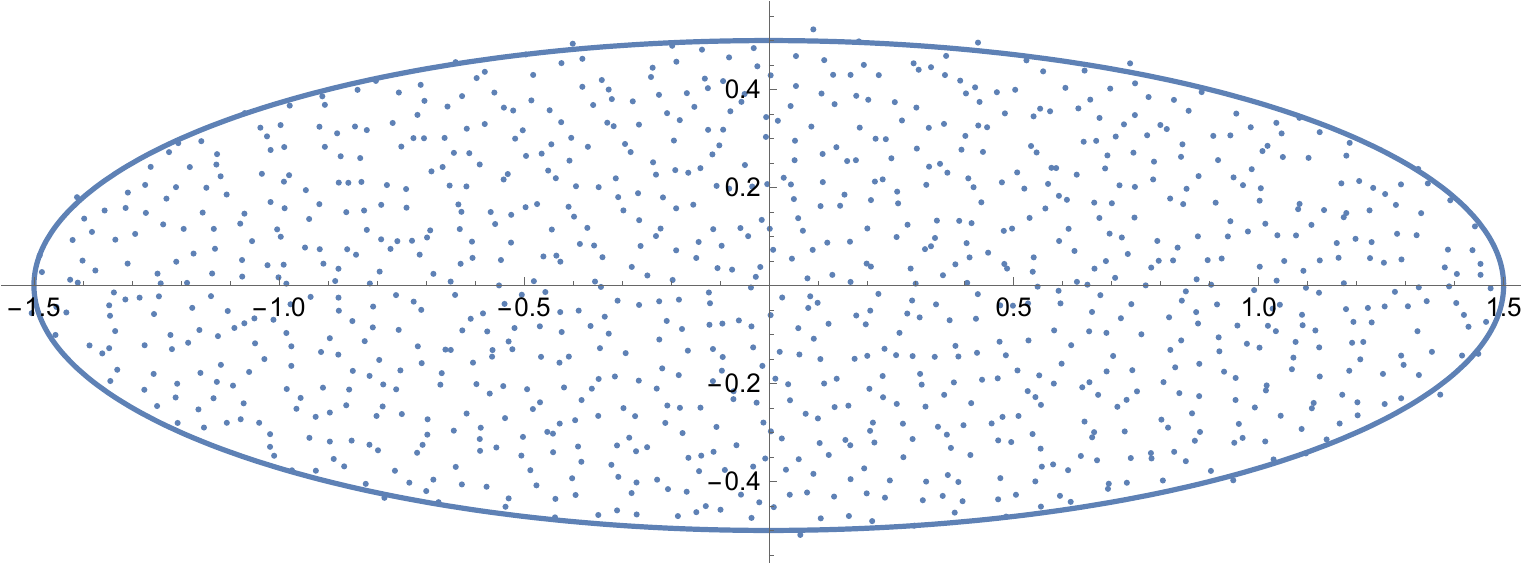}};
\node at (0,2.25) {\footnotesize Elliptic GinUE};
\end{tikzpicture}\hspace{-0.75cm}
\begin{tikzpicture}[slave]
\node at (0,0) {\includegraphics[width=3.4cm]{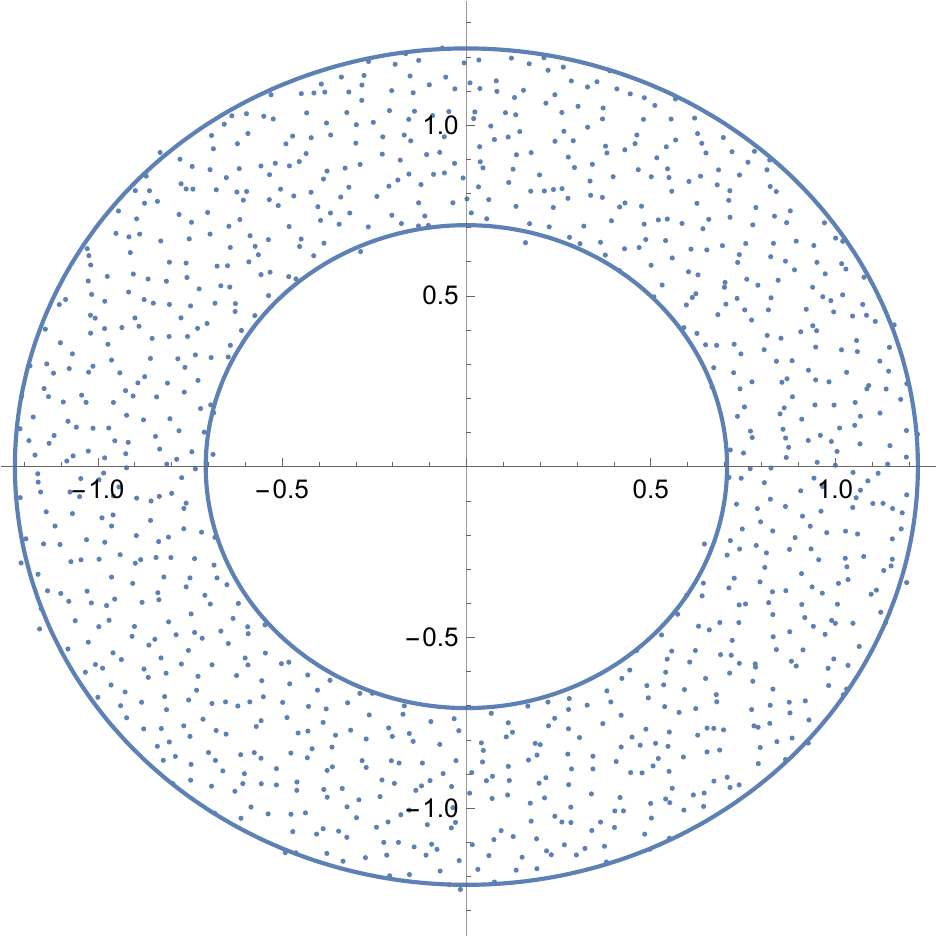}};
\node at (0,2.25) {\footnotesize Induced GinUE};
\end{tikzpicture}\hspace{-1.3cm}
\begin{tikzpicture}[slave]
\node at (0,0) {\includegraphics[width=3.4cm]{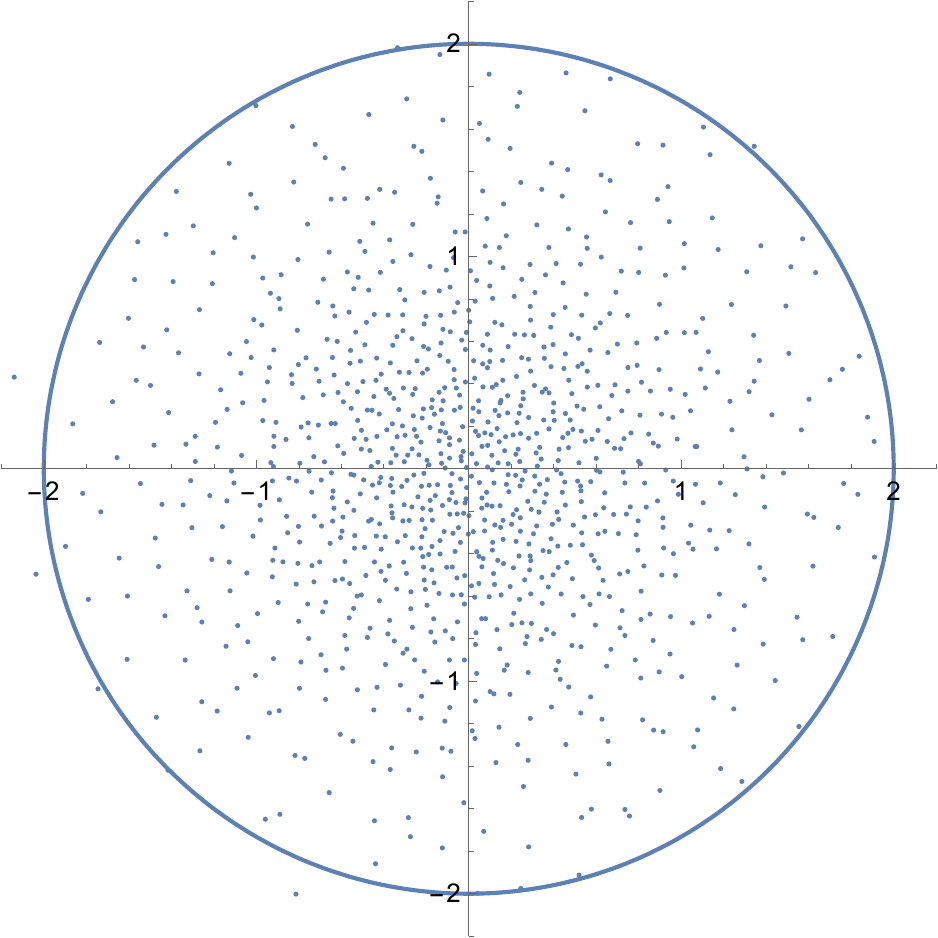}};
\node at (0,2.25) {\footnotesize Induced SrUE};
\end{tikzpicture}\hspace{-1.25cm}
\begin{tikzpicture}[slave]
\node at (0,0) {\includegraphics[width=3.4cm]{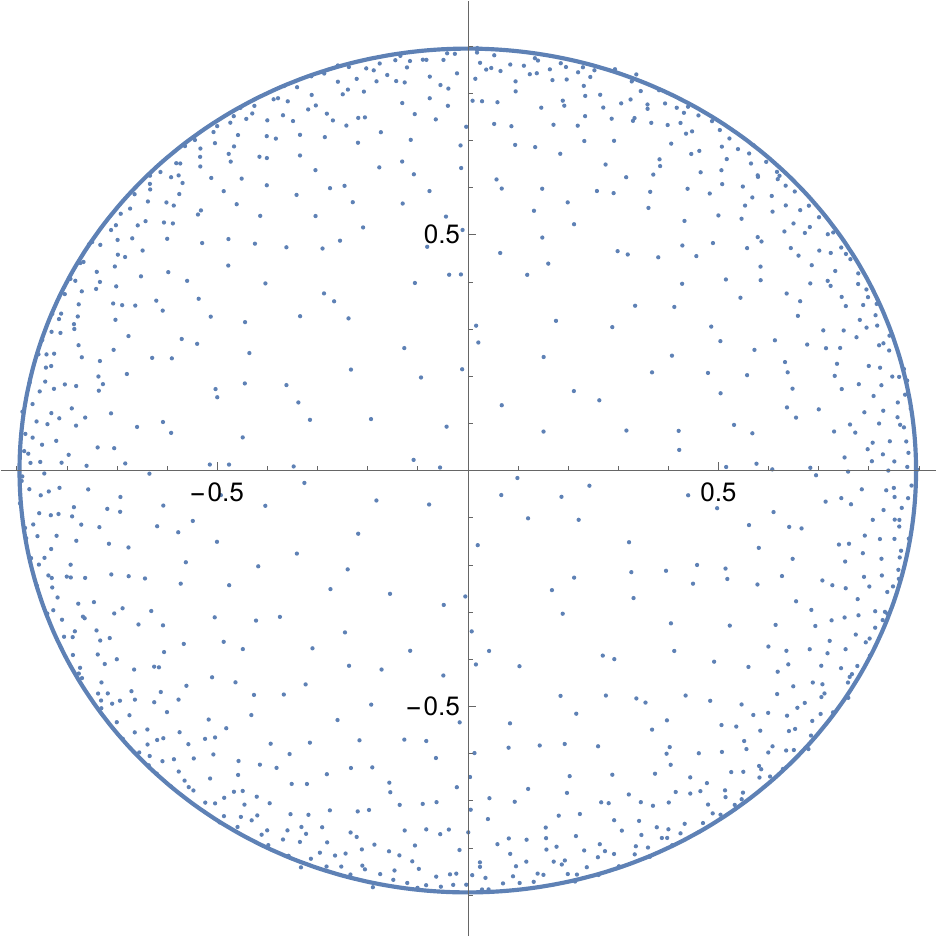}};
\node at (0,2.25) {\footnotesize TUE};
\end{tikzpicture}
\end{center}
\caption{Each plot shows a sample of \eqref{general density intro} with $n = 1000$ and a different potential $Q$: from left to right, $Q(z) = \frac{1}{1-\tau^{2}}(|z|^2-\tau \re z^{2})$ with $\tau=\frac{1}{2}$, $Q(z) = |z|^2 - 2a \log |z|$ with $a = \frac{1}{2}$, $Q(z) = a \log(1 + |z|^2)$ with $a = 1.25$, and $Q$ as in \eqref{Q TUE} with $a = 0.25$. The blue curves represent $\partial \mathcal{S}$. \label{fig:eigenvalues}}
\end{figure}

\subsubsection*{Numerical confirmations of Corollary \ref{col1}.}
In this subsection, we verify numerically Corollary \ref{col1} with $k=1,2$ for four random matrix models for which it is possible to make exact simulations (Figure~\ref{fig:eigenvalues} shows typical eigenvalue configurations for these models). 
\vspace{-0.25cm}\paragraph{Smallest gaps of the elliptic Ginibre ensemble (elliptic GinUE).} A Ginibre matrix is a matrix with identically and independently distributed centered complex Gaussian entries. The elliptic Ginibre ensemble is a one-parameter deformation of the Ginibre ensemble defined as follows (see \cite[Section 2.3]{BFreview}): for $\tau \in [0,1)$, an $n\times n$ elliptic Ginibre matrix $J$ is of the form 
\begin{align*}
J = \sqrt{1+\tau}H_{1} + i \sqrt{1-\tau} H_{2}, \qquad \tau \in [0,1),
\end{align*}
where $H_{1},H_{2}$ are independent Hermitian matrices drawn from the Gaussian unitary ensemble (GUE) that can be constructed according to
\begin{align*}
H_{1} = \frac{1}{2}(G_{1}+G_{1}^{\dagger}), \qquad H_{2} = \frac{1}{2}(G_{2}+G_{2}^{\dagger}),
\end{align*}
with $G_{1}, G_{2}$ independent Ginibre matrices of size $n \times n$ whose entries have variance $\frac{1}{n}$, and $G_{1}^{\dagger},G_{2}^{\dagger}$ denote the conjugate transposes of $G_{1}$ and $G_{2}$, respectively. 
The eigenvalues of $J$ are distributed as \eqref{general density intro} with $Q(z)=\frac{1}{1-\tau^{2}}(|z|^2-\tau \re z^{2})$. In this case, the equilibrium measure has a constant density and is supported on an ellipse:
\begin{align*}
\mathcal{S}=\Big\{z \in \C: \Big( \frac{\re z}{1+\tau} \Big)^{2} + \Big( \frac{\im z}{1-\tau} \Big)^{2} \leq 1\Big\}, \qquad \rho(z) = \frac{1}{\pi(1-\tau^{2})}\mathbf{1}_{\mathcal{S}}(z), \qquad \mathcal{J} = \frac{1}{(1-\tau^{2})^{2}}.
\end{align*}
By Corollary \ref{col1}, $t_{k}$ has density proportional to $x^{4k-1}e^{\frac{-x^{4}}{4(1-\tau^{2})^{2}}}$. 
The histograms in Figure \ref{fig:histo} (left), obtained from the two smallest gaps $t_{1}^{(400)}, t_{2}^{(400)}$ computed across $10^4$ elliptic Ginibre matrices of size $400 \times 400$ with $\tau=\frac{1}{2}$, show convincing agreement with the theoretical predictions of Corollary~\ref{col1}. 

\smallskip When $\tau = 1$, $J$ becomes Hermitian and Corollary \ref{col1} no longer applies. In this case, it is known \cite{Vinson2001,BAB2013} that the typical size of the smallest gaps is of order $n^{-4/3}$, in contrast to the $n^{-3/4}$ scaling obtained for $\tau\in [0,1)$. This transition is consistent with the fact that as $\tau \to 1$, the density of $t_{k}$ becomes increasingly concentrated near $0$. Exploring the \textit{almost-Hermitian regime}, where $n\to + \infty$ and $\tau \to 1$ simultaneously, would therefore be an interesting direction for future research.

\vspace{-0.25cm}\paragraph{Smallest gaps of the induced Ginibre ensemble (induced GinUE).} An induced Ginibre matrix is of the form $(G^{\dagger}G)^{1/2}U$, where $G$ is a Ginibre matrix of size $m\times n$ $(m\geq n)$ whose entries have variance $\frac{1}{n}$, and $U$ is an $n\times n$ Haar distributed unitary matrix that is independent of $G$. Eigenvalues of such matrices are distributed as \eqref{general density intro} with $Q(z) = |z|^{2}-2a\log |z|$ and $a=\frac{m-n}{n}$, see \cite[Proposition 2.8]{BFreview}. In this case, the eigenvalues accumulate uniformly on an annulus:
\begin{align*}
\mathcal{S}=\{z \in \C: \sqrt{a} \leq |z| \leq \sqrt{1+a}\}, \qquad \rho(z) = \frac{1}{\pi}\mathbf{1}_{\mathcal{S}}(z), \qquad \mathcal{J} = 1.
\end{align*}
Even though Corollary \ref{col1} does not apply directly since $\partial \mathcal{S}$ is the union of two smooth Jordan curves, our numerical simulations suggest that the conclusion of Corollary \ref{col1} still holds in this setting (this also supports Remark \ref{rem:assumptions not needed}). Indeed, the histograms in the second column of Figure \ref{fig:histo}, which were made by computing $t_{1}^{(400)},t_{2}^{(400)}$ for $10^4$ induced Ginibre matrices with $n=400$ and $m=600$ (i.e. $a=\frac{1}{2}$), show convincing agreement with the densities of $t_{1},t_{2}$ given by Corollary \ref{col1}.

\begin{figure}
\begin{center}
\begin{tikzpicture}[master]
\node at (0,0) {\includegraphics[width=4cm]{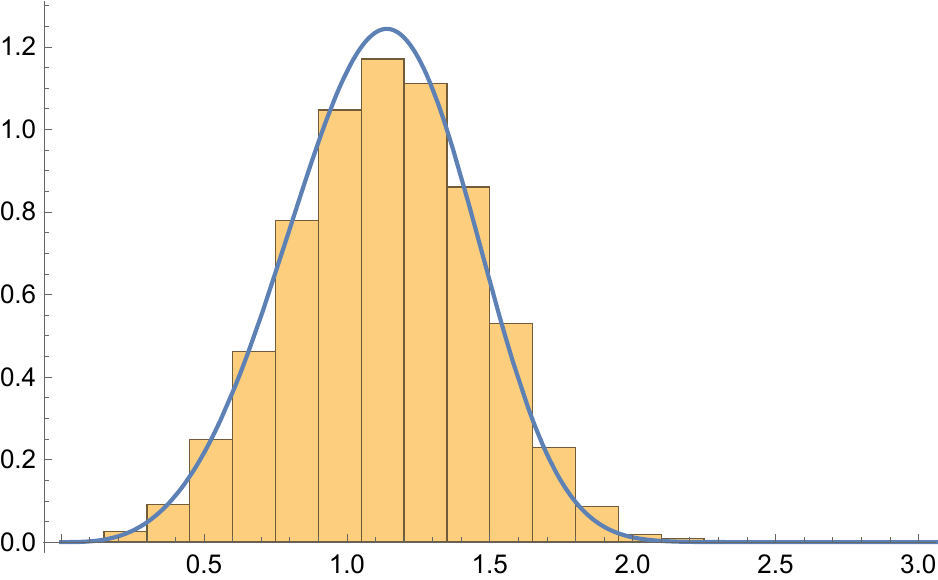}};
\node at (0,2) {\footnotesize Elliptic GinUE with $n=400$};
\node at (0,1.5) {\footnotesize $\mathcal{J}=\frac{16}{9}$};
\node at (-2.5,0) {\footnotesize $k=1$};
\end{tikzpicture}\hspace{-0.5cm}
\begin{tikzpicture}[slave]
\node at (0,0) {\includegraphics[width=4cm]{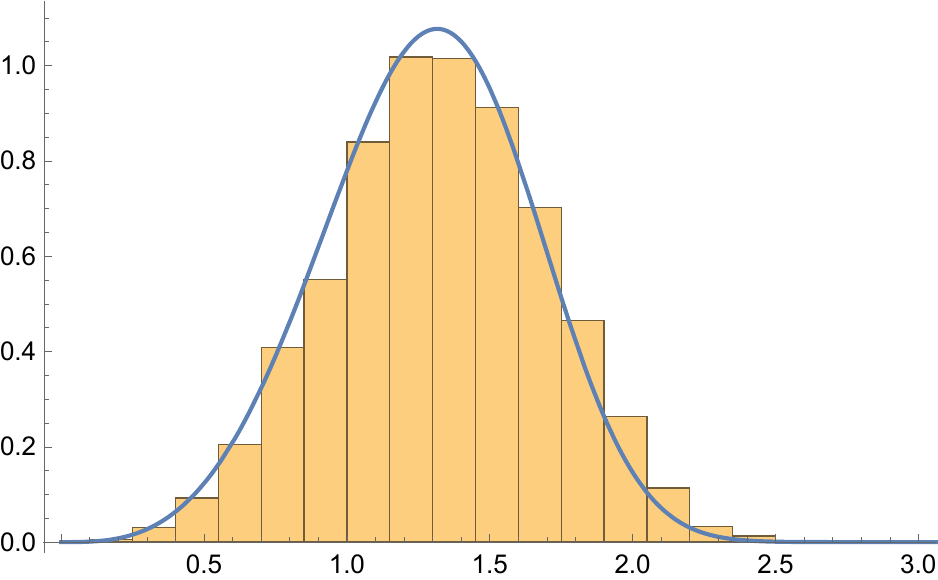}};
\node at (0,2) {\footnotesize Induced GinUE with $n=400$};
\node at (0,1.5) {\footnotesize $\mathcal{J}=1$};
\end{tikzpicture} \hspace{-0.5cm}
\begin{tikzpicture}[slave]
\node at (0,0) {\includegraphics[width=4cm]{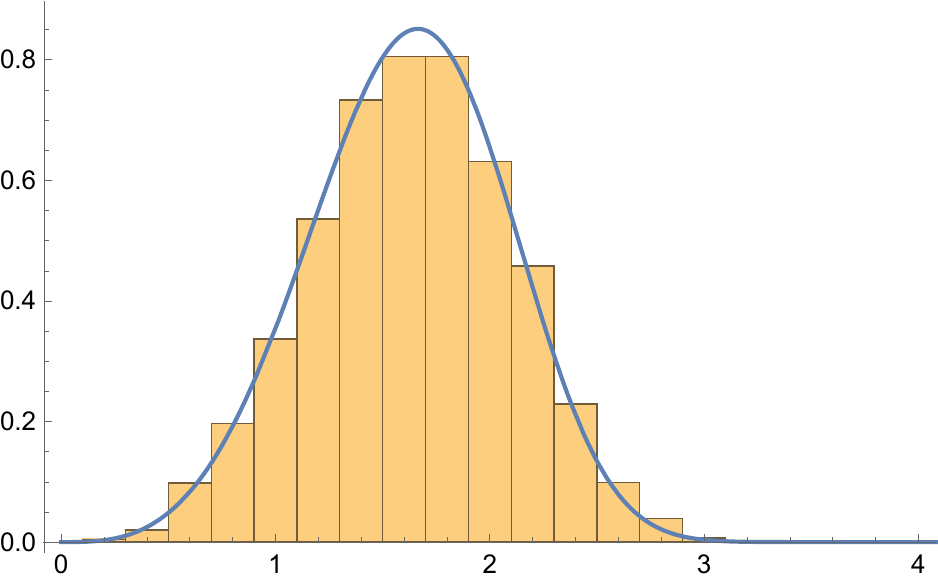}};
\node at (0,2) {\footnotesize Induced SrUE with $n=400$};
\node at (0,1.5) {\footnotesize $\mathcal{J}= 0.3905$};
\end{tikzpicture} 
\begin{tikzpicture}[master]
\node at (0,0) {\includegraphics[width=4cm]{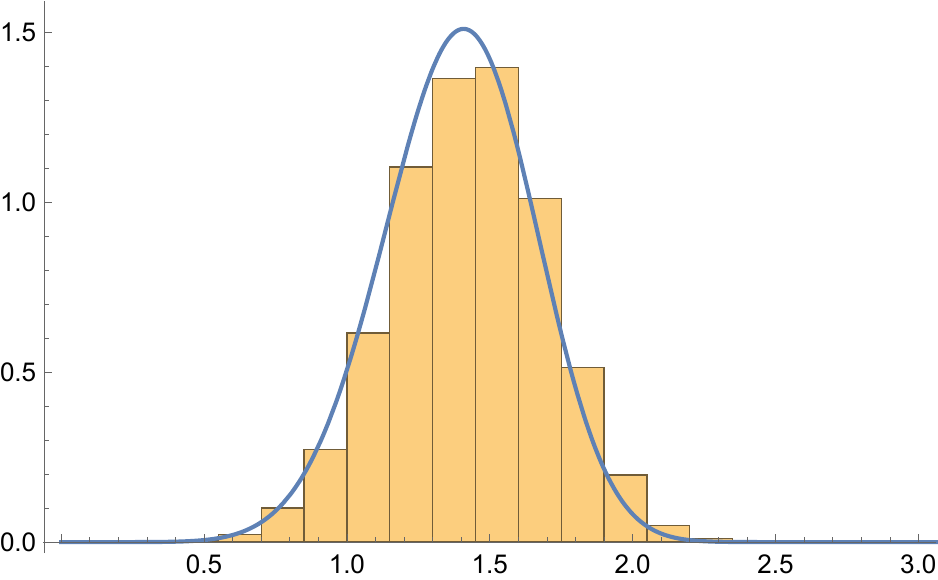}};
\node at (-2.5,0) {\footnotesize $k=2$};
\end{tikzpicture} \hspace{-0.5cm}
\begin{tikzpicture}[slave]
\node at (0,0) {\includegraphics[width=4cm]{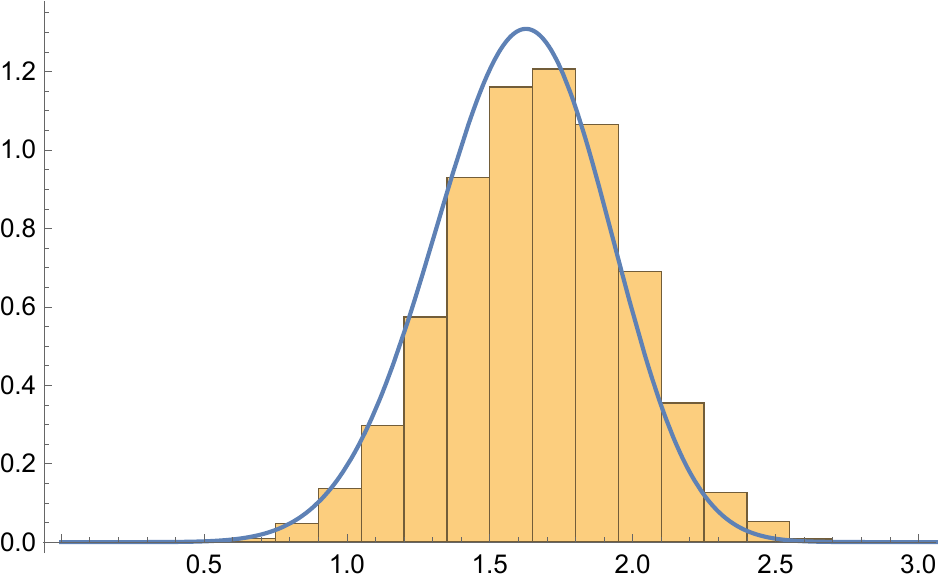}};
\end{tikzpicture} \hspace{-0.5cm}
\begin{tikzpicture}[slave]
\node at (0,0) {\includegraphics[width=4cm]{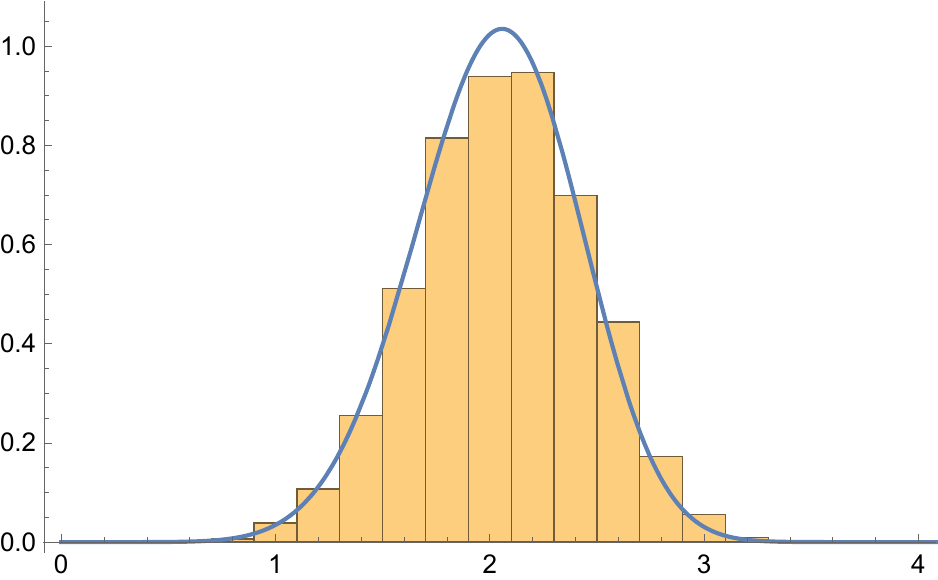}};
\end{tikzpicture} 
\end{center}
\caption{Numerical confirmations of Corollary \ref{col1} for $k=1$ (top) and $k=2$ (bottom), and for the potentials $Q(z) = \frac{1}{1-\tau^{2}}(|z|^2-\tau \re z^{2})$ with $\tau=\frac{1}{2}$ (left), $Q(z) = |z|^{2}-2a\log |z|$ with $a=\frac{1}{2}$ (middle), and $Q(z)=a\log(1+|z|^{2})$ with $a=1.25$ (right). In each diagram, the blue curve corresponds to the density \eqref{density} with either $k=1$ (top) or $k=2$ (bottom). The histograms were produced as described in the text. \label{fig:histo}}
\end{figure}

\vspace{-0.25cm}\paragraph{Smallest gaps of the induced spherical ensemble (induced SrUE).} An induced spherical matrix is of the form $(G_{2}^\dagger G_{2})^{-1/2}G_{1}$, where $G_{1}$ and $G_{2}$ are independent Ginibre matrices of size $n\times n$ and $m\times n$ ($m\geq n$), respectively, and whose entries have variance $1$. It follows from \cite[Exercises 3.6 q.3 with $\beta=2$]{F book} that the eigenvalues of $(G_{2}^\dagger G_{2})^{-1/2}G_{1}$ follow \eqref{general density intro} with $Q(z) = a\log (1+|z|^{2})$ and $a=\frac{m}{n} \geq 1$. If $m/n=a>1$ is fixed, then $Q$ satisfies the growth condition \eqref{growth of $Q$} and we have
\begin{align*}
\mathcal{S} = \{z\in \C:|z|\leq \frac{1}{\sqrt{a-1}}\}, \qquad \rho(z) = \frac{a \; \mathbf{1}_{\mathcal{S}}(z)}{\pi(1+|z|^{2})^{2}}, \qquad \mathcal{J} = \frac{1-5a+10a^{2}-10a^{3}+5a^{4}}{5a^{2}}.
\end{align*}
The histograms in the third column of Figure \ref{fig:histo} were made from $10^4$ induced spherical matrices with $n=400$ and $m=500$, and  support the validity of Corollary \ref{col1}.

\vspace{-0.25cm}\paragraph{Smallest gap of truncated unitary matrices (TUE).} Let $T$ be the upper-left $n\times n$ submatrix of a Haar distributed unitary matrix of size $(n+\alpha)\times (n+\alpha)$ for some $\alpha \in \mathbb{N}_{>0}$. It is shown in \cite{ZS2000} that the eigenvalues of $T$ are distributed according to  \eqref{general density intro} with
\begin{align}\label{Q TUE}
Q(z) = \begin{cases}
-a\log(1-|z|^{2}), & \mbox{if } |z|< 1, \\
+\infty, & \mbox{if } |z| \geq 1,
\end{cases} \qquad \mbox{ and } \qquad a=\frac{\alpha-1}{n}.
\end{align}
If $a>0$ is independent of $n$, then
\begin{align*}
\mathcal{S} = \{z\in \C:|z|\leq \frac{1}{\sqrt{1+a}}\}, \qquad \rho(z) = \frac{a}{\pi(1-|z|^{2})^{2}}, \qquad \mathcal{J} = \frac{1+5a+10a^{2}+10a^{3}+5a^{4}}{5a^{2}}.
\end{align*}
Note that although $Q\notin C^{2}(\C)$, its discontinuity lies outside the droplet $\mathcal{S}$, so we still expect the conclusion of Corollary \ref{col1} to hold when $a>0$. This is confirmed numerically, though the convergence appears significantly slower in this case. For the elliptic GinUE, induced GinUE, and induced SrUE, the histograms are already in good agreement with Corollary~\ref{col1} when $n=400$. In contrast, for the TUE, the convergence is slower: even at $n=2000$, the histogram still shows noticeable deviations, but the progression across increasing values of $n$ clearly supports convergence, as illustrated in Figure~\ref{fig:TUE}. 

\begin{figure}
\begin{center}
\begin{tikzpicture}[master]
\node at (0,0) {\includegraphics[width=4cm]{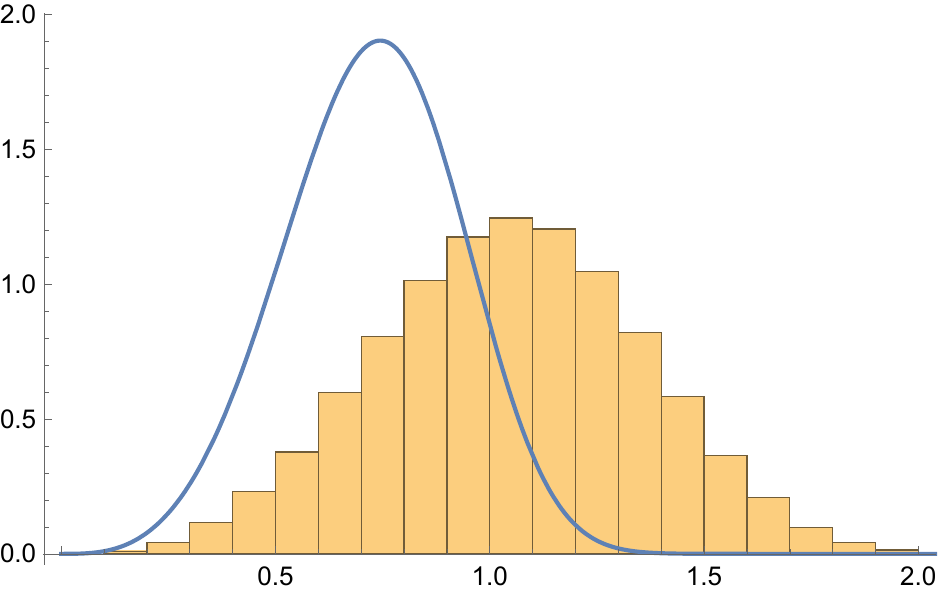}};
\node at (0,2) {\footnotesize TUE with $n=40$};
\node at (0,1.5) {\footnotesize $\mathcal{J}=9.7625$};
\node at (-2.5,0) {\footnotesize $k=1$};
\end{tikzpicture}\hspace{-0.5cm}
\begin{tikzpicture}[slave]
\node at (0,0) {\includegraphics[width=4cm]{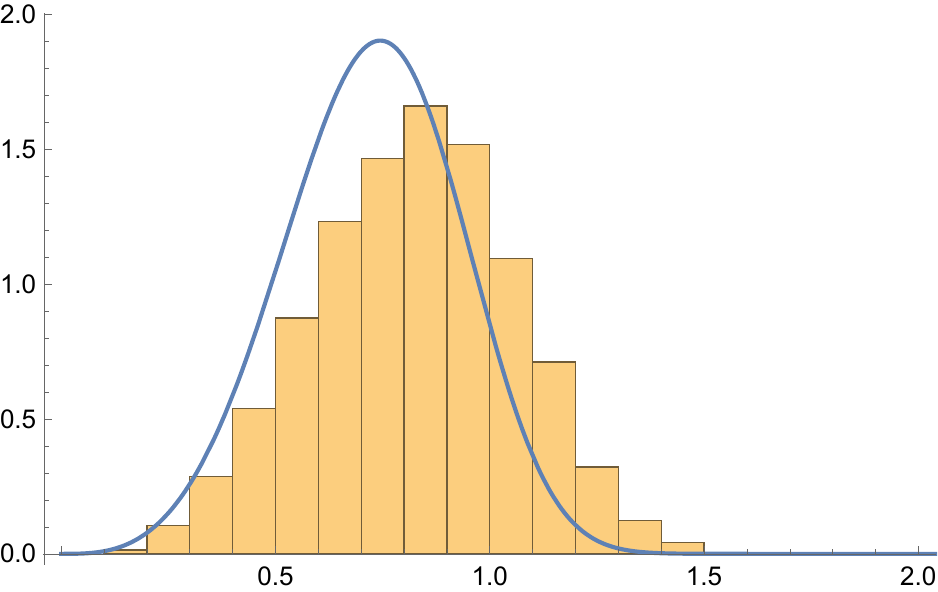}};
\node at (0,2) {\footnotesize TUE with $n=400$};
\node at (0,1.5) {\footnotesize $\mathcal{J}=9.7625$};
\end{tikzpicture} \hspace{-0.5cm}
\begin{tikzpicture}[slave]
\node at (0,0) {\includegraphics[width=4cm]{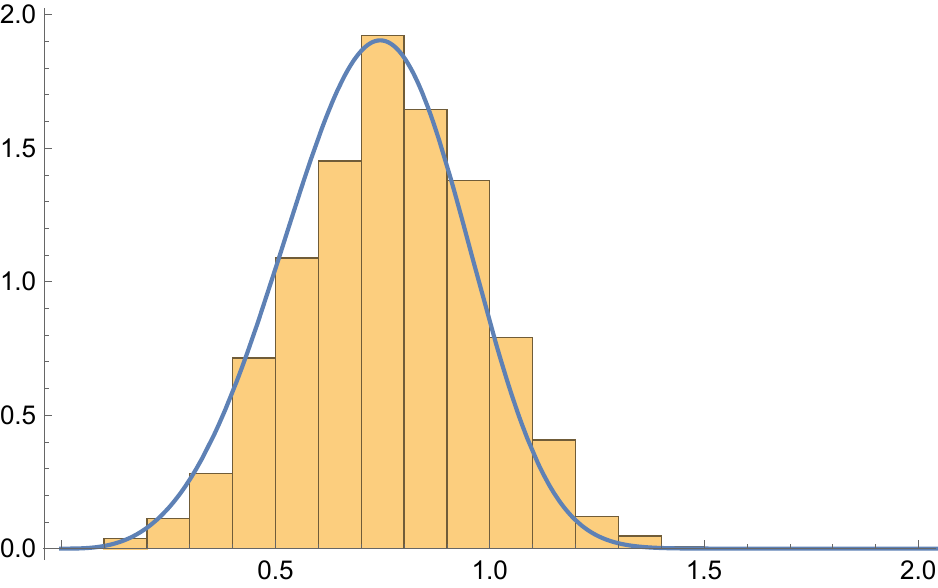}};
\node at (0,2) {\footnotesize TUE with $n=2000$};
\node at (0,1.5) {\footnotesize $\mathcal{J}=9.7625$};
\end{tikzpicture}
\begin{tikzpicture}[master]
\node at (0,0) {\includegraphics[width=4cm]{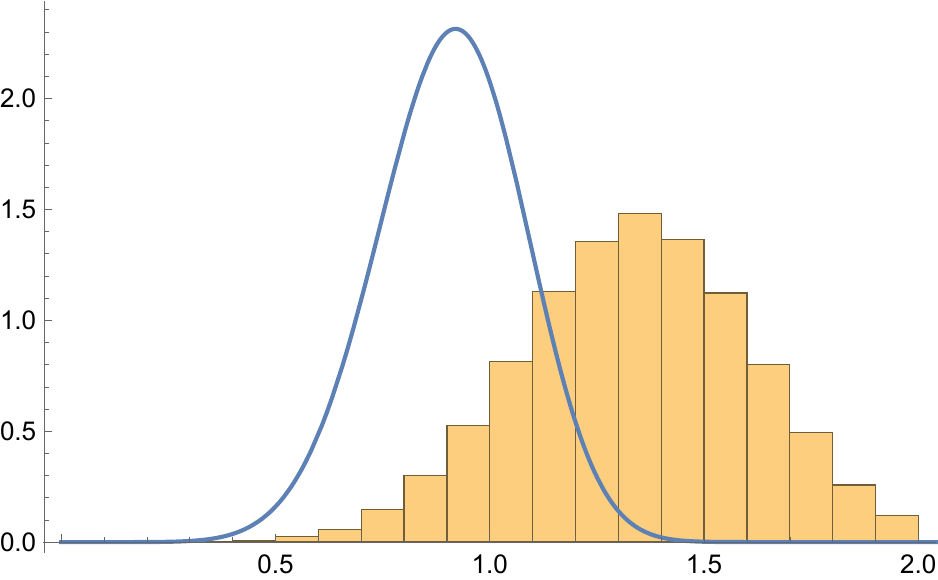}};
\node at (-2.5,0) {\footnotesize $k=2$};
\end{tikzpicture}\hspace{-0.5cm}
\begin{tikzpicture}[slave]
\node at (0,0) {\includegraphics[width=4cm]{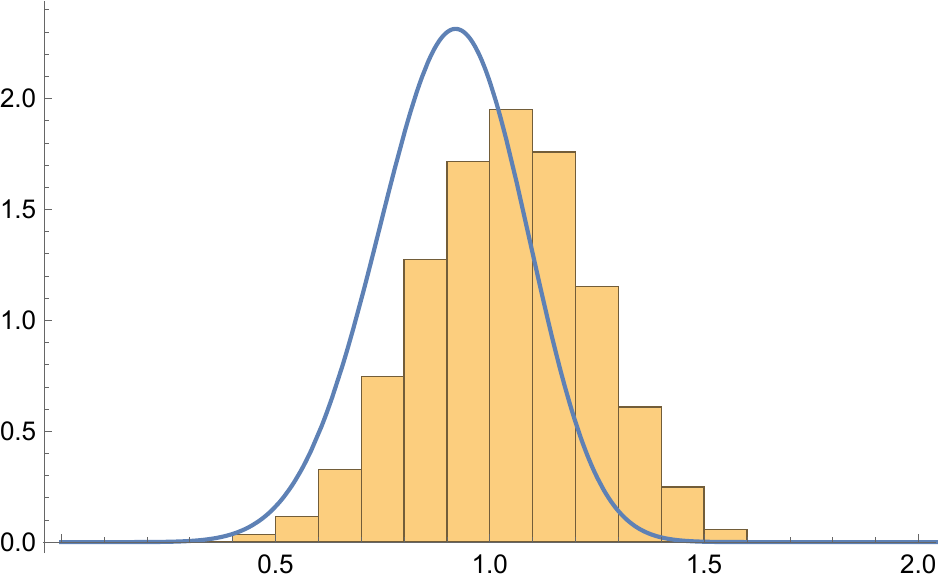}};
\end{tikzpicture} \hspace{-0.5cm}
\begin{tikzpicture}[slave]
\node at (0,0) {\includegraphics[width=4cm]{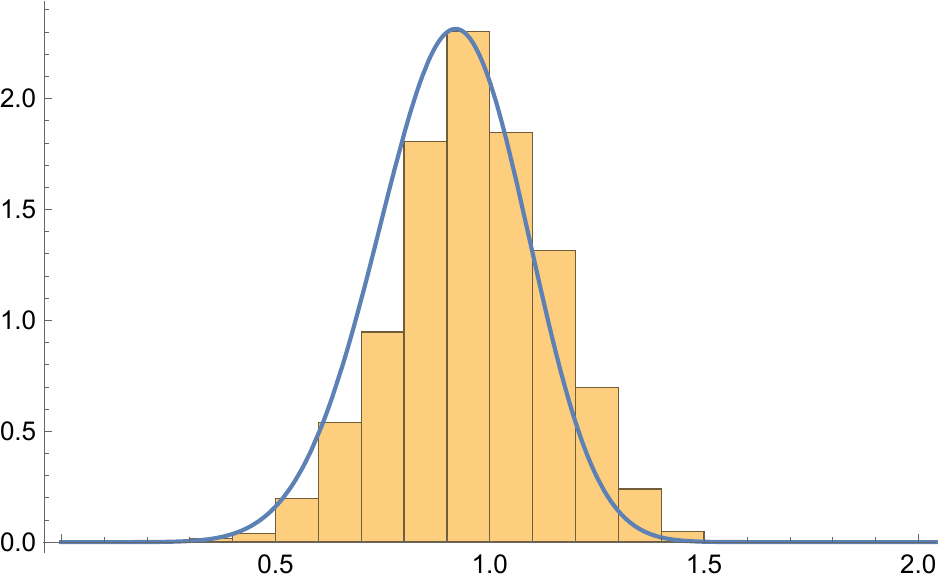}};
\end{tikzpicture}
\end{center}
\caption{\label{fig:TUE}Numerical confirmations of Corollary \ref{col1} for $k=1$ (top) and $k=2$ (bottom), for the potential \eqref{Q TUE} with $a=0.25$. The plots show histograms for increasing values of $n$; as $n$ grows, the empirical distributions seem to converge toward the theoretical prediction of Corollary~\ref{col1}.}
\end{figure}

\medskip This slower convergence may be explained by \textit{where} the smallest gaps are most likely to occur. For the TUE, the limiting density $z\mapsto \rho(z)$ is maximized for $z \in \partial \mathcal{S}$, so the smallest gaps typically arise near the edge of the droplet. In contrast, for the elliptic GinUE, induced GinUE, and even more so for the induced SrUE, the smallest gaps are more likely to occur in the bulk. This distinction is significant because the convergence of the average empirical density $\frac{1}{n}K_{n}(z,z)$ (with $K_{n}$ the correlation kernel) to $\rho(z)$ is uniform on compact subsets of $\mathcal{S}$ (see \cite[Theorem 2.8]{AHM2010}), but breaks down near the edge (see \cite[Corollary 1.5.6]{HW2021}). When the smallest gaps are more likely to occur at the edge (as in the TUE), the convergence is therefore expected to be slower. Making this heuristic more precise would be an interesting direction for future work.

\paragraph{Outline.} Our proof follows the approach of \cite[Section 3]{SJ2012}, which in turn builds on ideas from \cite{BAB2013, Soshnikov}. The analysis relies on the fact that the point process \eqref{general density intro} is determinantal. In \cite{SJ2012}, many estimates on the $k$-point correlation functions were obtained using the explicit form of the Ginibre correlation kernel. To extend their analysis to general potentials $Q$, we rely on several estimates on the correlation kernel due to Ameur, Hedenmalm, and Makarov \cite{AmeurNearBoundary, AHM2010}, together with the new result Theorem \ref{thm:Charlier} that provides precise asymptotics for the correlation kernel near the soft edge. This new theorem extends a result of Hedenmalm and Wennman \cite{HW2021} by going beyond the leading term and by covering a slightly larger neighborhood of $\partial \mathcal{S}$. The relevant results from \cite{AmeurNearBoundary, AHM2010}, together with Theorem \ref{thm:Charlier}, are collected in Section~\ref{sec:background}. Using these estimates, the arguments of \cite[Section 3]{SJ2012} are adapted in Section~\ref{sec:proof}, which also contains the proof of our main results. Theorem \ref{thm:Charlier} is proved in Appendix \ref{appendix:Proof of edge}. 

\section{Estimates on the kernels}\label{sec:background}

Let $\xi^{(n)} := \sum_{i=1}^{n}\delta_{z_i}$, where $z_{1},\ldots,z_{n}$ are drawn from \eqref{general density intro} and indexed so that $z_{1}\preceq \ldots \preceq z_{n}$. The associated $k$-point correlation functions $\{\rho_{k}:\mathbb{C}^{k} \to [0,+\infty)\}_{k \geq 1}$ are defined such that
\begin{align}\label{def of DPP}
\mathbb{E}\bigg[\sum_{\substack{z_{1},\ldots,z_{k}\in \xi^{(n)} \\ z_{i} \neq z_{j} \; \mathrm{if} \; i \neq j}}f(z_{1},\ldots,z_{k})\bigg] = \int_{\C^{k}}f(w_1, \ldots, w_k)\rho_k(w_1, \ldots, w_k)d^{2}w_1 \ldots d^{2}w_k
\end{align}
holds for any measurably function $f:\C^{k}\to \C$ with compact support. The sum at the left-hand side in \eqref{def of DPP} is taking over all (ordered) $k$-tuples of distinct points of $\xi^{(n)}$. With the function
\begin{align*}
f = \mathbbm{1}_{\Lambda_{1}^{k_{1}}\times \dots \times \Lambda_{m}^{k_{m}}},
\end{align*}
where $m,k_{1},\ldots,k_{m}$ are integers satisfying $\sum_{i=1}^{m}k_{i}=k$ and $\Lambda_{1},\ldots,\Lambda_{m}\subset \C$ are bounded Borel sets, \eqref{def of DPP} becomes
\begin{align}\label{def of DPP 2}
\mathbb{E}\bigg[\prod_{i=1}^{m}\frac{\xi^{(n)}(\Lambda_{i})!}{(\xi^{(n)}(\Lambda_{i})-k_{i})!}\bigg] = \int_{\Lambda_{1}^{k_{1}}\times \dots \times \Lambda_{m}^{k_{m}}}\rho_k(w_1, \ldots, w_k)d^{2}w_1 \ldots d^{2}w_k.
\end{align}
As mentioned above, the point process $\xi^{(n)}$ is determinantal, which means that all correlation functions exist and can be written in the form of a determinant:
\begin{align}\label{def of rhok}
\rho_k(w_1, \ldots, w_k) & = \det(K_n(w_i,w_j))_{1\leq i, j\leq k}. 
\end{align}
Here $K_{n}:\C^{2}\to \C$ is the so-called \textit{correlation kernel}, given by
\begin{align}\label{def of Kn}
K_n(z,w) =  e^{-\frac{1}{2}nQ(z)-\frac{1}{2}nQ(w)}\sum_{j = 0}^{n-1}p_{j,n}(z)\overline{p_{j,n}(w)},
\end{align}
where $p_{j,n}$ is the orthonormal polynomial of degree $j$ defined by
\begin{align}\label{ortho condition}
\int_{\C}p_{j,n}(z)\overline{p_{k,n}(z)}e^{-nQ(z)}d^{2}z = \delta_{j,k}, \qquad j,k \geq 0.
\end{align}
For more background on determinantal point processes, see e.g. \cite{SoshnikovSurvey}.

\medskip Let $z_{0}\in \C$ and $\epsilon >0$. In what follows, we will use the notation
\begin{align*}
\partial_{z} = \frac{1}{2}\bigg(\frac{\partial}{\partial x} - i \frac{\partial}{\partial y}\bigg), \qquad \partial_{\overline{z}} = \frac{1}{2}\bigg(\frac{\partial}{\partial x} + i \frac{\partial}{\partial y}\bigg), \qquad D(z_{0},\epsilon) = \{z\in \C : |z-z_{0}| \leq \epsilon\}.
\end{align*}
If $Q$ is real analytic in a neighborhood of a point $z_{0}$, the function
\begin{align*}
\psi(z,w) = \sum_{j,k=0}^{+\infty} \frac{1}{j!k!}(z-z_{0})^{j}(w-\overline{z_{0}})^{k}\partial_{z}^{j}\partial_{\overline{z}}^{k}Q(z_{0})
\end{align*}
is holomorphic in $(z,w)$ when $z$ is close to $z_{0}$ and $w$ is close to $\overline{z_{0}}$, and we have
\begin{align*}
Q(z) = \psi(z,\overline{z})
\end{align*}
for all $z$ close to $z_{0}$. Let $b_{0}$ be the function
\begin{align*}
b_{0}(z,w) = \partial_{z}\partial_{w}\psi(z,w),
\end{align*}
so that $b_{0}(z,\overline{z}) = \partial_{z}\partial_{\overline{z}}Q(z) = \frac{\Delta Q(z)}{4}$ for all $z$ close to $z_{0}$. Since we assume that $Q$ is real-analytic in a neighborhood $\mathcal{N}$ of $\mathcal{S}$, by varying $z_{0}$ we find that $\psi(z,w)$ extends to a holomorphic function in a neighborhood of $\{(z,w)\in \mathcal{N}^{2}:w=\overline{z}\}$.

\smallskip For all $z,w$ close enough to each other, we have by \cite[(5.7) and above]{AHM2010} that
\begin{align*}
& \psi(z,\overline{w}) - \frac{Q(z) + Q(w)}{2} = - i \, \im L_{z}(w) - |w-z|^{2} \frac{\Delta Q(z)}{8} + \bigO(|w-z|^{3}), \\
& \re \psi(z,\overline{w}) - \frac{Q(z) + Q(w)}{2} = - \frac{\Delta Q(z)}{8} |z-w|^{2} + \bigO(|w-z|^{3}),
\end{align*}
where $L_{z}(w) = (w-z)\partial_{z}Q(z) + \frac{1}{2}(w-z)^{2}\partial_{z}^{2}Q(z)$.

\smallskip The following result follows from \cite[Theorem 2.1]{AmeurNearBoundary}  (a weaker version of this theorem was previously proved in \cite[Theorem 2.8]{AHM2010}).

\begin{theorem}[Bulk correlation](\cite[Theorem 2.1]{AmeurNearBoundary})\label{thm:Ameur bulk-bulk correlation improved} Let $Q$ be as in Theorem \ref{thm1}. Fix a constant $M$ satisfying $M \geq 100/\sqrt{\min_{\mathcal{S}}\Delta Q}>0$, and set $\epsilon_{n} = M \sqrt{\log n}/\sqrt{n}$. Let $B_{n}$ be the $2\epsilon_{n}$-neighborhood of $\partial \mathcal{S}$. Let $(z_{n})$ be any convergent sequence such that $z_{n}\in \mathcal{S}\setminus B_{n}$ for all $n$. Then 
\begin{align*}
K_{n}(z,w) = \frac{n b_{0}(z,\overline{{w}})}{\pi}e^{n \psi(z,\overline{w}) -n \frac{Q(z)+Q(w)}{2}} + \bigO(1), 
\end{align*}
as $n\to + \infty$ uniformly for $z,w\in  D(z_{n},\epsilon_{n})$.
\end{theorem}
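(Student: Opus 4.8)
The plan is to use the approximate Bergman projection method of Ameur--Hedenmalm--Makarov. Introduce the reproducing kernel $\mathcal{K}_{n}(z,w)=\sum_{j=0}^{n-1}p_{j,n}(z)\overline{p_{j,n}(w)}$ of the space $\mathcal{P}_{n}$ of polynomials of degree $<n$ equipped with the inner product $\langle f,g\rangle=\int_{\C}f\overline{g}\,e^{-nQ}d^{2}z$, so that $\mathcal{K}_{n}(\cdot,w)\in\mathcal{P}_{n}$, $\langle f,\mathcal{K}_{n}(\cdot,w)\rangle=f(w)$ for every $f\in\mathcal{P}_{n}$, and $K_{n}(z,w)=\mathcal{K}_{n}(z,w)e^{-\frac{n}{2}Q(z)-\frac{n}{2}Q(w)}$. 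Up to this conjugation the asserted main term is $\widetilde{\mathcal{K}}_{n}(z,w):=\tfrac{n}{\pi}b_{0}(z,\overline{w})e^{n\psi(z,\overline{w})}$; note $\widetilde{\mathcal{K}}_{n}(z,z)=n\rho(z)e^{nQ(z)}$ since $b_{0}(z,\overline{z})=\tfrac{\Delta Q(z)}{4}=\pi\rho(z)$ and $\psi(z,\overline{z})=Q(z)$, consistent with $K_{n}(z,z)=n\rho(z)+\bigO(1)$. Two features of $\widetilde{\mathcal{K}}_{n}$ drive the argument. First, it is the leading term of the Bergman kernel of the local Fock-type model attached to $Q$ near a bulk point $z_{0}:=\lim_{n}z_{n}$: for $f$ holomorphic near $z_{0}$, Laplace's method gives $\langle f,\widetilde{\mathcal{K}}_{n}(\cdot,w)\rangle=f(w)+\bigO(\tfrac1n)$, and the $\bigO(\tfrac1n)$ relative defect --- the first subleading coefficient of the Fock expansion --- is exactly what will produce the $\bigO(1)$ error after conjugation. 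Second, off the diagonal $\widetilde{\mathcal{K}}_{n}$ decays like a Gaussian: by \cite[(5.7) and above]{AHM2010}, $\re\psi(z,\overline{w})-\tfrac{Q(z)+Q(w)}{2}=-\tfrac{\Delta Q(z)}{8}|z-w|^{2}+\bigO(|z-w|^{3})$, so $\big|\widetilde{\mathcal{K}}_{n}(z,w)\big|e^{-\frac{n}{2}Q(z)-\frac{n}{2}Q(w)}\le Cn\,e^{-cn|z-w|^{2}}$ on a fixed neighbourhood of $z_{0}$, for some $c,C>0$.

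\medskip The heart of the proof is to compare $\mathcal{K}_{n}(\cdot,w)$ with $\chi\,\widetilde{\mathcal{K}}_{n}(\cdot,w)$, where $\chi$ is a fixed smooth cutoff equal to $1$ on $D(z_{0},2\delta_{0})$ and supported in a neighbourhood of $z_{0}$ on which $Q$ is real analytic, $\Delta Q>0$, and the above expansions hold, with $\delta_{0}$ chosen so that $\chi\equiv1$ on $D(z_{n},\epsilon_{n})$ for all large $n$. Since $z\mapsto\widetilde{\mathcal{K}}_{n}(z,w)$ is holomorphic near $z_{0}$ and $\mathcal{K}_{n}(\cdot,w)\in\mathcal{P}_{n}$, the function $F:=\chi\,\widetilde{\mathcal{K}}_{n}(\cdot,w)$ satisfies $\overline{\partial}F=(\overline{\partial}\chi)\widetilde{\mathcal{K}}_{n}(\cdot,w)$, supported where $\delta_{0}\le|z-z_{0}|\le2\delta_{0}$ and hence exponentially small in $L^{2}(e^{-nQ})$ by the Gaussian bound. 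Solving $\overline{\partial}u=\overline{\partial}F$ by Hörmander's weighted $L^{2}$-estimate --- the hypothesis $\Delta Q>0$ near $\mathcal{S}$ supplies the curvature lower bound, and one uses the standard device of modifying $Q$ away from $\mathcal{S}$ so that the solution is forced to lie in $\mathcal{P}_{n}$, cf.\ \cite{AHM2010} --- produces $u$ with $\|u\|_{L^{2}(e^{-nQ})}$ exponentially small, holomorphic (hence pointwise-controlled) near $z_{0}$ because $\overline{\partial}u$ vanishes there, and with $G:=F-u\in\mathcal{P}_{n}$. One then identifies $G$ with $\mathcal{K}_{n}(\cdot,w)$ up to exponentially small errors using the extremal characterisation $\mathcal{K}_{n}(w,w)=\sup\{|g(w)|^{2}/\|g\|^{2}:g\in\mathcal{P}_{n}\}$ together with the approximate reproducing property above: testing with $g=G$ gives the lower bound $\mathcal{K}_{n}(w,w)e^{-nQ(w)}\ge n\rho(w)-\bigO(1)$, and the matching upper bound follows since every $g\in\mathcal{P}_{n}$ is, near $z_{0}$, well approximated by an element of the local Fock space (whose Bergman kernel value at $w$ is $n\rho(w)+\bigO(1)$). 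Feeding this back and using the reproducing identity $\mathcal{K}_{n}(z,w)=\langle\mathcal{K}_{n}(\cdot,w),\mathcal{K}_{n}(\cdot,z)\rangle$, one obtains, for $z,w\in D(z_{n},\epsilon_{n})$, $\mathcal{K}_{n}(z,w)=\langle G,G'\rangle+(\text{exp.\ small})=\widetilde{\mathcal{K}}_{n}(z,w)+\bigO\!\big(e^{n\re\psi(z,\overline{w})}\big)$, where $G'$ is the analogue of $G$ with $z$ in place of $w$ and the $\bigO$-term absorbs both the $\bigO(\tfrac1n)$ relative defect of the Fock model and the exponentially small $\overline{\partial}$- and truncation-errors. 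Multiplying by $e^{-\frac{n}{2}Q(z)-\frac{n}{2}Q(w)}$ and invoking the Gaussian bound converts this error into $\bigO(1)$, which is the assertion. Throughout, $L^{2}$-bounds are upgraded to pointwise ones via the sub-mean-value inequality $|g(z)|^{2}e^{-nQ(z)}\lesssim n\int_{D(z,C/\sqrt{n})}|g|^{2}e^{-nQ}$, valid for holomorphic $g$ with a controlled correction for the almost-holomorphic functions appearing above.

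\medskip I expect the main obstacle to be the control near $\partial\mathcal{S}$, which is precisely what dictates the removal of the $2\epsilon_{n}$-band $B_{n}$ and the explicit lower bound on $M$. The Fock model ignores the constraint $\deg<n$; restoring it introduces a ``hard-edge'' defect of size $\sim e^{-c'n\,\mathrm{dist}(z,\partial\mathcal{S})^{2}}$ with $c'$ comparable to $\min_{\mathcal{S}}\Delta Q$, and this must be absorbed into the error along with the prefactor $\sim n$, so it has to be $o(1)$. For $z\in D(z_{n},\epsilon_{n})$ with $z_{n}\in\mathcal{S}\setminus B_{n}$ one has $\mathrm{dist}(z,\partial\mathcal{S})\ge\epsilon_{n}=M\sqrt{\log n}/\sqrt{n}$, so the defect is $\lesssim n\cdot n^{-c'M^{2}}$, which is $o(1)$ once $M$ is large enough --- the condition $M\ge100/\sqrt{\min_{\mathcal{S}}\Delta Q}$ leaves ample room. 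The remaining work is bookkeeping: making every estimate uniform in $z,w\in D(z_{n},\epsilon_{n})$ and in the base point $z_{n}$ (this uniformity, together with the fact that $\epsilon_{n}$ is slightly larger than the microscopic scale $1/\sqrt{n}$, is where this theorem strengthens \cite[Theorem 2.8]{AHM2010}), and carrying the Fock expansion to the precise order needed for an $\bigO(1)$ rather than $o(n)$ error; the analytic inputs behind it --- the Fock-model Bergman asymptotics and the Taylor expansion of $\psi(z,\overline{w})-\tfrac{Q(z)+Q(w)}{2}$ recorded above from \cite{AHM2010} --- are classical.
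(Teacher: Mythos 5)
A preliminary remark: the paper does not prove this statement at all --- it is imported verbatim from \cite[Theorem 2.1]{AmeurNearBoundary}, with \cite[Theorem 2.8]{AHM2010} cited as the weaker interior version --- so the only meaningful comparison is with the cited proof. Your plan correctly identifies the machinery behind that proof (the approximate kernel $\tfrac{n}{\pi}b_{0}(z,\overline w)e^{n\psi(z,\overline w)}$, a cut-off, a H\"ormander $\overline\partial$-correction in a weight of logarithmic growth that forces membership in the space of polynomials of degree $<n$, and $L^{2}\to$ pointwise upgrades), and your treatment of the case where $z_{0}=\lim z_{n}$ is a fixed interior point is in line with how \cite{AHM2010} argues.

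The genuine gap is the near-boundary uniformity, which is precisely the added content of the theorem and the reason the paper cites \cite{AmeurNearBoundary} rather than \cite{AHM2010}: the sequence $z_{n}$ may approach $\partial\mathcal S$ at distance $2\epsilon_{n}$, so $z_{0}$ may lie on $\partial\mathcal S$, and the paper genuinely uses this regime (Lemmas \ref{lemma2} and \ref{lemma4} invoke the theorem for points as close as $\epsilon_{n}$ to the edge). Your construction cuts off at a fixed radius $\delta_{0}$ around $z_{0}$ and asserts that the $\overline\partial$- and truncation errors are exponentially small. That cannot be right when $z_{0}\in\partial\mathcal S$: your error control is insensitive to $\mathrm{dist}(z,\partial\mathcal S)$ within the fixed disc, so it would equally yield $K_{n}(z_{0},z_{0})=n\rho(z_{0})+\bigO(1)$ at the boundary, contradicting the edge value $\tfrac12 n\rho(z_{0})$ (the $\mathrm{erfc}$ factor in Theorem \ref{thm:Charlier}). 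The step that actually breaks is the degree-forcing weight: any subharmonic $\phi$ with growth $\le 2\log|z|+\bigO(1)$ and $\phi\le Q+\bigO(1/n)$ (the latter is needed to carry the H\"ormander bound for $u$ back to the $e^{-nQ}$-norm) satisfies $\phi\le\widehat Q+\bigO(1/n)$, so on the part of $\mathrm{supp}\,\overline\partial\chi$ lying outside the droplet the $\overline\partial$-datum is weighed by a factor $e^{\frac n2(Q-\widehat Q)}\approx e^{\frac{\Delta Q}{4}n\,\mathrm{dist}(\cdot,\partial\mathcal S)^{2}}$, which overwhelms the Gaussian gain $e^{-\frac{\Delta Q}{8}n|v-w|^{2}}$; already for $Q=|z|^{2}$ the net factor at distance $t$ outside is $e^{+\frac n2 t^{2}}$, exponentially large. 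So the ``hard-edge defect $\sim e^{-c'n\,\mathrm{dist}(z,\partial\mathcal S)^{2}}$'' you invoke in your final paragraph is exactly the estimate that has to be proven, and the scheme as described does not produce it. The cited proof instead localizes at scale $\epsilon_{n}$ around $z_{n}$, so that the cut-off annulus stays inside $\mathcal S$ (this is what the $2\epsilon_{n}$-standoff buys) and no weight loss occurs; the price is that all ``small'' errors are only $n^{-cM^{2}}$ with $c\asymp\min_{\mathcal S}\Delta Q$, and checking that they beat the prefactor $n$, the factor $|\overline\partial\chi|\sim\epsilon_{n}^{-1}$, and the $\sqrt n$ from the $L^{2}\to L^{\infty}$ step is where the explicit hypothesis $M\ge100/\sqrt{\min_{\mathcal S}\Delta Q}$ is consumed. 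As written, your argument establishes (a version of) \cite[Theorem 2.8]{AHM2010}, not the stated theorem.
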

Let $\mathcal{S}^{\circ}$ denote the interior of $\mathcal{S}$. We will also use the following theorem, which shows that $K_{n}(z_{1},z_{0})$ decays rapidly when $z_{0}$ is in the bulk and $z_{1}$ is sufficiently far from $z_{0}$. 
\begin{theorem}[Fast decay]\cite[Theorem 8.1]{AHM2010}\label{thm:bulk damping}
Let $Q$ be as in Theorem \ref{thm1}, let $z_{1}\in \mathcal{N}$, and let $z_{0}\in \mathcal{S}^{\circ}$. Then there exist $c,C>0$ such that
\begin{align*}
|K_{n}(z_{1},z_{0})| \leq C n \exp \bigg( -c\sqrt{n} \min\bigg\{\frac{\mathrm{dist}_{\C}(z_{0},\partial \mathcal{S})}{2},|z_{1}-z_{0}|\bigg\} \bigg)
\end{align*}
holds for all $n\in \N$. Moreover, the constant $c$ can be chosen independently of $z_{0}$ and $z_{1}$. 
\end{theorem}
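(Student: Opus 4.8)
The plan is to realize $K_n$ as the reproducing kernel of the space $\mathcal{P}_n$ of polynomials of degree $<n$ with the inner product $\langle f,g\rangle_{nQ}=\int_{\C}f\overline{g}\,e^{-nQ}$, and to combine a crude size bound with the standard $\bar\partial$/Hörmander mechanism for off-diagonal decay of weighted Bergman kernels. First I would record the elementary facts: $K_n$ satisfies the reproducing relation $K_n(z,w)=\int_{\C}K_n(z,u)\overline{K_n(w,u)}\,d^2u$, the extremal characterization $K_n(z,z)=\sup\{|p(z)|^2e^{-nQ(z)}:p\in\mathcal{P}_n,\ \|p\|_{nQ}=1\}$, and $\int_{\C}|K_n(\cdot,w)|^2\,d^2u=K_n(w,w)$. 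Writing $e^{-nQ(w)}=e^{-nQ(z)}\bigl|e^{-n(w-z)\partial_{z}Q(z)}\bigr|^{2}e^{\bigO(n|w-z|^2)}$ and applying the sub-mean-value inequality on a disk of radius $n^{-1/2}$ to the subharmonic function $|p(w)e^{-n(w-z)\partial_{z}Q(z)}|^2$ gives $K_n(z,z)\le Cn$ uniformly for $z$ in a fixed neighborhood of $\mathcal{S}$, using only $Q\in C^2$; by Cauchy--Schwarz $|K_n(z,w)|\le\sqrt{K_n(z,z)K_n(w,w)}\le Cn$. This already yields the theorem in the range $\sqrt n\min\{\dist_{\C}(z_0,\partial\mathcal{S})/2,|z_1-z_0|\}=\bigO(1)$ and accounts for the prefactor $Cn$.

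The exponential gain comes from the strict subharmonicity $\Delta Q>0$ on the neighborhood $\mathcal{N}$, which makes $\phi:=nQ$ a weight with $\Delta\phi\ge c_0 n$ there. I would then run the classical weight-twisting argument: twist $\phi$ by $t\eta$, where $\eta$ is a mollification of $u\mapsto\min\{|u-w|,\rho_0\}$ for a fixed small $\rho_0$ with $D(w,\rho_0)\subset\mathcal{N}$, and $t=\varepsilon\sqrt n$ with $\varepsilon$ small enough that $\Delta(\phi-t\eta)\ge\tfrac12 c_0n$ still; controlling $\int|K^{\mathrm{Berg}}(\cdot,w)|^2e^{-\phi+2t\eta}$ via the pointwise upper bound for the Bergman kernel of a $C^{1,1}$ strictly subharmonic weight and the extremal characterization applied to the test function $K^{\mathrm{Berg}}(\cdot,w)$ yields $|K^{\mathrm{Berg}}(z,w)|\le Cn\,e^{-c\sqrt n\min\{|z-w|,\rho_0\}}$ for $z,w\in\mathcal{N}$, where $K^{\mathrm{Berg}}$ denotes the kernel of $A^2(e^{-nQ})$. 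The rate $\sqrt n$ is forced by $\sqrt{\Delta\phi}\asymp\sqrt n$, and $\rho_0$ (a fixed constant) is the range over which the lower bound on $\Delta\phi$ persists.

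The delicate point is that $K^{\mathrm{Berg}}$ is the kernel of the \emph{full} Bergman space, whereas $K_n$ is the kernel of the much smaller subspace $\mathcal{P}_n$, and the two do not coincide. I would bridge this by a weighted potential-theoretic argument: for $p\in\mathcal{P}_n$ the weighted polynomial $|p|^2e^{-nQ}$ essentially lives on the droplet, $|p(z)|^2e^{-nQ(z)}\le(\max_{\mathcal{S}}|p|^2e^{-nQ})\,e^{-n(Q(z)-\check Q(z))}$ with $Q-\check Q\ge0$ vanishing precisely on $\mathcal{S}$ and quadratically nondegenerate across $\partial\mathcal{S}$ (by \eqref{EL equality}--\eqref{EL inequality} and regularity), so $K_n(z,z)\le Cn\,e^{-cn\dist_{\C}(z,\partial\mathcal{S})^2}$ just outside $\mathcal{S}$ and $\le Cne^{-cn}$ at fixed distance. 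Equivalently, replacing $Q$ by $\hat Q$ equal to $Q$ near $\mathcal{S}$ and strictly subharmonic with (say) quadratic growth at infinity, one has $A^2(e^{-n\hat Q})\supset\mathcal{P}_n$, the previous paragraph applies globally to its kernel $\hat K$, and the ``extra'' directions in $A^2(e^{-n\hat Q})$ are concentrated on or beyond $\partial\mathcal{S}$, so $|K_n(z_1,z_0)-\hat K(z_1,z_0)|\le Cn\,e^{-cn\dist_{\C}(z_0,\partial\mathcal{S})^2}$ for $z_0\in\mathcal{S}^\circ$. Combining this with the Bergman bound and using the reproducing relation $K_n=K_n*K_n$ once — splitting at $|w-z_0|=|z_1-z_0|/2$, so the $w$'s near $z_0$ stay at distance $\ge\dist_{\C}(z_0,\partial\mathcal{S})/2$ from $\partial\mathcal{S}$ (hence the Bergman bound applies to $K_n(z_1,w)$), and bounding the complementary integral by $\sqrt{K_n(z_1,z_1)}\bigl(\int_{|w-z_0|\ge|z_1-z_0|/2}|K_n(w,z_0)|^2\bigr)^{1/2}$ — stitches the far regime $|z_1-z_0|>\rho_0$ onto the near one and pushes the valid range from the fixed $\rho_0$ out to $\dist_{\C}(z_0,\partial\mathcal{S})/2$, which is exactly the $\min$ in the statement; tracking constants shows $c$ depends only on $\min_{\mathcal{S}}\Delta Q$, $\sup_{\mathcal{N}}|D^2Q|$ and the geometry of $\mathcal{S}$.

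The main obstacle is precisely this reconciliation of the finite-dimensional polynomial space with the infinite-dimensional Bergman space: the clean $\bar\partial$-machinery produces $K^{\mathrm{Berg}}$, and one must quantify — through the weighted-polynomial/obstacle estimates — that passing to $K_n$ costs no more than $e^{-c\sqrt n\dist_{\C}(z_0,\partial\mathcal{S})}$, which is what forces the $\dist_{\C}(z_0,\partial\mathcal{S})/2$ cap; the remaining ingredients (sub-mean-value bound, weight-twisting, the single use of $K_n=K_n*K_n$) are routine. As an alternative to the Bergman step one can instead iterate $K_n=K_n*K_n$ over dyadic scales seeded by Theorem~\ref{thm:Ameur bulk-bulk correlation improved}, which gives $|K_n(z,w)|\le Cn\,e^{-cn|z-w|^2}\le n^{-3}$ at distance $|z-w|\asymp\sqrt{\log n/n}$; the nuisance there is controlling the $L^2$-tails $\int_{|w-z_0|\ge t}|K_n(w,z_0)|^2\,d^2w$ uniformly, which again ultimately relies on the fact that weighted polynomials live on the droplet.
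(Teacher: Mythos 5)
First, a point of comparison: the paper contains no proof of this statement at all — it is imported verbatim as \cite[Theorem 8.1]{AHM2010} — so the only meaningful benchmark is the argument in that reference, which indeed runs along the lines you sketch: a sub-mean-value bound giving $K_n(z,z)=\bigO(n)$ near $\mathcal{S}$, weighted $L^2$/$\bar\partial$ (H\"ormander) estimates with a Lipschitz weight twist at rate $\sqrt{n}$ exploiting $\Delta Q>0$ on $\mathcal{N}$, and the obstacle function $\widehat{Q}$ of \eqref{def of Qhat}, through \eqref{Q minus Qhat lower bounded}, to keep the construction inside the space of polynomials of degree $<n$. So your overall architecture matches the source.

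The genuine gap is the step you yourself single out as the main obstacle. The passage from the full Bergman kernel $\hat K$ of $A^2(e^{-n\hat Q})$ to the polynomial kernel $K_n$ is asserted, not proved: the phrase ``the extra directions in $A^2(e^{-n\hat Q})$ are concentrated on or beyond $\partial\mathcal{S}$'' is not an argument, because $\mathcal{P}_n$ and $A^2(e^{-n\hat Q})$ carry different inner products (you changed the weight away from $\mathcal{S}$), and a difference of reproducing kernels of different spaces with respect to different weights is not controlled by any concentration statement about an orthogonal complement — even for a genuine subspace with the \emph{same} weight one only gets diagonal inequalities, nothing pointwise off-diagonal. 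To make this step rigorous one must perform the cutoff-and-correct surgery: truncate $\hat K(\cdot,z_0)$ (or a local kernel approximant) by a cutoff supported well inside $\mathcal{S}$, and solve the resulting $\bar\partial$-problem with a H\"ormander estimate in a weight built from $n\widehat{Q}$ together with a logarithmic singularity, the $2\log|z|$ growth of $\widehat{Q}$ at infinity being precisely what forces the corrected entire function to be a polynomial of degree $<n$, and \eqref{Q minus Qhat lower bounded} being what makes the correction exponentially small. Note also that this mechanism yields a kernel-difference bound of order $n\,e^{-c\sqrt{n}\,\dist_{\C}(z_0,\partial\mathcal{S})}$, governed by the off-diagonal decay of $\hat K$ across the cutoff transition, and not the stronger $Cn\,e^{-cn\,\dist_{\C}(z_0,\partial\mathcal{S})^{2}}$ you claim (your concentration heuristic cannot deliver that either); fortunately the weaker bound is exactly what produces the $\dist_{\C}(z_0,\partial\mathcal{S})/2$ cap in the statement and is all that is needed. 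Once that comparison step is actually carried out, the remaining ingredients of your outline — the $\bigO(n)$ diagonal bound, the $\sqrt{n}$-rate weight twist, the single use of the reproducing identity to stitch the near and far regimes — are sound; as written, however, the proposal is a plan whose central estimate is missing.
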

The so-called \textit{obstacle function} $\widehat{Q}$ is defined for $z \in \C$ by
\begin{align}\label{def of Qhat}
\widehat{Q}(z) & = \ell_{Q}- 2 \int_{\C}\log \frac{1}{|z-w|}d\mu(w) 
\end{align}
where $\ell_{Q}$ is the constant in \eqref{EL inequality}. If $Q$ is real analytic in a neighborhood of $\mathcal{S}$, by \eqref{EL equality} $\widehat{Q}$ is real analytic in $\mathcal{S}^{\circ}$ and $\Delta Q(z) = \Delta \widehat{Q}(z)$ for all $z\in \mathcal{S}^{\circ}$. Moreover, $\widehat{Q}$ is subharmonic and $C^{1,1}$-smooth on $\C$ (see e.g. \cite{AHM2010}); therefore, since $Q$ is assumed to be regular, there exists $C_{0}>0$ such that
\begin{align}\label{Q minus Qhat lower bounded}
Q(z)-\widehat{Q}(z) \geq C_{0} \min\{1,\mathrm{dist}(z,\mathcal{S})^{2}\}, \qquad \mbox{for all } z \in \C.
\end{align} 
 
\begin{proposition}[Pointwise estimate]\cite[Proposition 3.6]{AHM2010}\label{prop:pointwise estimate}
Let $Q$ be as in Theorem \ref{thm1}. There exists $C_{1}>0$ such that
\begin{align*}
|K_{n}(z,w)| \leq C_{1} n e^{-\frac{n}{2}(Q(z)-\widehat{Q}(z))}e^{-\frac{n}{2}(Q(w)-\widehat{Q}(w))}, \qquad z,w\in \C.
\end{align*}
\end{proposition}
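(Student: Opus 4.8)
\emph{Proof sketch.} The plan is to reduce the statement to an estimate on the diagonal $K_{n}(z,z)$ and then combine a Bernstein--Walsh type inequality with a sub-mean value argument at the scale $n^{-1/2}$. First, applying the Cauchy--Schwarz inequality to the sum in \eqref{def of Kn},
\begin{align*}
|K_{n}(z,w)|\le e^{-\frac n2 Q(z)-\frac n2 Q(w)}\Big(\sum_{j=0}^{n-1}|p_{j,n}(z)|^{2}\Big)^{1/2}\Big(\sum_{j=0}^{n-1}|p_{j,n}(w)|^{2}\Big)^{1/2}=K_{n}(z,z)^{1/2}K_{n}(w,w)^{1/2},
\end{align*}
so it suffices to prove that $K_{n}(z,z)\le Cn\,e^{-n(Q(z)-\widehat Q(z))}$ for all $z\in\C$, with $C>0$ independent of $z$ and $n$; then $C_{1}=C$ works. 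Writing $\|p\|^{2}:=\int_{\C}|p|^{2}e^{-nQ}d^{2}w$, the reproducing property of $(z,w)\mapsto\sum_{j=0}^{n-1}p_{j,n}(z)\overline{p_{j,n}(w)}$ on the space $\mathcal P_{n-1}$ of polynomials of degree $\le n-1$ (with the inner product of $L^{2}(e^{-nQ}d^{2}w)$) gives the Christoffel-function identity
\begin{align*}
K_{n}(z,z)=e^{-nQ(z)}\sum_{j=0}^{n-1}|p_{j,n}(z)|^{2}=\sup\big\{|p(z)|^{2}e^{-nQ(z)}:\ p\in\mathcal P_{n-1},\ \|p\|=1\big\}.
\end{align*}

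Next I would prove the Bernstein--Walsh inequality: for every $p\in\mathcal P_{n-1}$ and every $z\in\C$,
\begin{align*}
|p(z)|^{2}e^{-n\widehat Q(z)}\le\sup_{\zeta\in\mathcal S}|p(\zeta)|^{2}e^{-nQ(\zeta)}.
\end{align*}
To see this, set $u:=\tfrac2n\log|p|-\widehat Q$, which is upper semicontinuous on $\C$ and subharmonic on $\C\setminus\mathcal S$: there $\log|p|$ is subharmonic, while $\widehat Q$, being $\ell_{Q}$ plus the logarithmic potential of $\mu$, is harmonic off $\mathrm{supp}\,\mu=\mathcal S$. As $|z|\to\infty$ one has $\widehat Q(z)=2\log|z|+\ell_{Q}+o(1)$ and $\tfrac2n\log|p(z)|\le\tfrac{2(n-1)}n\log|z|+\bigO(1)$, so $u(z)\to-\infty$. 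By the maximum principle for subharmonic functions on the unbounded domain $\C\setminus\mathcal S$, $u$ attains its supremum over $\C\setminus\mathcal S$ on $\partial\mathcal S\subset\mathcal S$, and since $\widehat Q=Q$ on $\mathcal S$ (by \eqref{EL equality} and \eqref{def of Qhat}) we get $\sup_{\C}u=\sup_{\mathcal S}(\tfrac2n\log|p|-Q)$, which is the claimed inequality. Dividing by $\|p\|^{2}$, taking the supremum over $p\in\mathcal P_{n-1}$, and using the Christoffel identity, the diagonal bound is reduced to $\sup_{\zeta\in\mathcal S}K_{n}(\zeta,\zeta)\le Cn$.

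For this last estimate I would use a sub-mean value inequality for almost-subharmonic functions. Fix a bounded neighbourhood $\mathcal N$ of $\mathcal S$ and set $C_{Q}:=\sup_{\overline{\mathcal N}}\Delta Q<\infty$ (finite since $Q\in C^{2}$). For $p\in\mathcal P_{n-1}$ the function $\log(|p|^{2}e^{-nQ})=2\log|p|-nQ$ has distributional Laplacian $\ge-nC_{Q}$ on $\mathcal N$, so $w\mapsto\log(|p(w)|^{2}e^{-nQ(w)})+\tfrac{nC_{Q}}4|w|^{2}$ is subharmonic there. Choose $\delta>0$ so small that $D(\zeta,\delta)\subset\mathcal N$ for all $\zeta\in\mathcal S$, and put $r_{n}:=\delta/\sqrt n$. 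The sub-mean value inequality over $D(\zeta,r_{n})$, together with Jensen's inequality applied to $\log$ and the identity $\tfrac1{\pi r_{n}^{2}}\int_{D(\zeta,r_{n})}|w|^{2}d^{2}w=|\zeta|^{2}+\tfrac{r_{n}^{2}}2$, yields for all $\zeta\in\mathcal S$
\begin{align*}
|p(\zeta)|^{2}e^{-nQ(\zeta)}\le e^{nC_{Q}r_{n}^{2}/8}\,\frac1{\pi r_{n}^{2}}\int_{D(\zeta,r_{n})}|p(w)|^{2}e^{-nQ(w)}d^{2}w\le\frac{e^{C_{Q}\delta^{2}/8}}{\pi\delta^{2}}\,n\,\|p\|^{2},
\end{align*}
since $nr_{n}^{2}=\delta^{2}$. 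Taking the supremum over $\zeta\in\mathcal S$ and over $p\in\mathcal P_{n-1}$ with $\|p\|=1$ gives $\sup_{\zeta\in\mathcal S}K_{n}(\zeta,\zeta)\le n\,e^{C_{Q}\delta^{2}/8}/(\pi\delta^{2})$, and assembling this with the Bernstein--Walsh reduction and the Cauchy--Schwarz bound completes the proof.

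The argument is fairly standard (close in spirit to estimates in \cite{AHM2010} and \cite{SaTo}); the two points needing care are the application of the maximum principle at infinity in the Bernstein--Walsh step, which relies precisely on $\deg p\le n-1$ together with the logarithmic growth $\widehat Q(z)=2\log|z|+\bigO(1)$, and the choice of the mesoscopic scale $r_{n}=\delta/\sqrt n$, for which $nr_{n}^{2}$ is bounded so that the correction factor $e^{nC_{Q}r_{n}^{2}/8}$ stays $\bigO(1)$ while the normalisation $1/(\pi r_{n}^{2})$ produces exactly the factor $n$. I expect the maximum-principle step to be the main (though still routine) obstacle.
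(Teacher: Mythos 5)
Your proof is correct. Note that the paper itself does not prove this proposition — it is imported verbatim from \cite[Proposition 3.6]{AHM2010} — and your argument is essentially a self-contained reconstruction of the standard proof behind that citation: Cauchy--Schwarz reduction to the diagonal, the Christoffel extremal characterization of $K_n(z,z)$, the Bernstein--Walsh comparison with the obstacle function (using that $\widehat{Q}=Q$ on $\mathcal{S}$ by \eqref{EL equality}, that $\widehat{Q}$ is harmonic off $\mathcal{S}$, and that $u(z)\le -\tfrac{2}{n}\log|z|+\bigO(1)\to-\infty$ so the maximum principle on the unbounded domain applies), and the sub-mean value estimate at scale $\delta/\sqrt{n}$ producing the factor $n$ with a constant depending only on $Q$; all of these steps are sound as written.
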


Finally, although the smallest gaps are unlikely to occur near the edge $\partial \mathcal{S}$, we will still need the following extension of a result from \cite{HW2021}, which provides subleading asymptotics for the kernel in a neighborhood of $\partial \mathcal{S}$, and which we believe is of independent interest.
\begin{theorem}[Edge correlation]\label{thm:Charlier}
Suppose $Q$ is as in Theorem \ref{thm1}, and let $z_{0}\in \partial \mathcal{S}$ and $M>0$ be fixed. Then, there exist unimodular continuous functions $c_{n}:\C\to \{z\in \C:|z|=1\}$ such that 
\begin{multline}
c_{n}(\xi;z_{0})\overline{c_{n}(\eta;z_{0})}K_{n}\bigg(z_{0}+\mathrm{n} \frac{\sqrt{2}\xi}{\sqrt{\Delta Q(z_{0})n/4}},  z_{0}+\mathrm{n} \frac{\sqrt{2}\eta}{\sqrt{\Delta Q(z_{0})n/4}}\bigg)  = \frac{\Delta Q(z_{0})n}{4\pi}\mathrm{k}(\xi,\eta)  \\ 
 +\mathrm{k}_{2}(\xi,\eta;z_{0}) \sqrt{n}+\bigO\big( 1+|\xi|^{6}+|\eta|^{6} \big),  \label{Charlier asymp}
\end{multline}
as $n\to + \infty$ uniformly for $z_{0}\in \partial \mathcal{S}$ and for $|\xi|,|\eta| \leq M \sqrt{\log n}$, where $\mathrm{n}\in \{z\in \C:|z|=1\}$ denotes the outward pointing unit normal to $\partial \mathcal{S}$ at $z_{0}$, and
\begin{align*}
& \mathrm{k}(\xi,\eta) = e^{2\xi \overline{\eta}-(|\xi|^{2}+|\eta|^{2})}\frac{\mathrm{erfc}(\xi+\overline{\eta})}{2}, \qquad \mathrm{erfc}(z) = \frac{2}{\sqrt{\pi}}\int_{z}^{+\infty}e^{-t^{2}}dt, \\
& \mathrm{k}_{2}(\xi,\eta;z_{0}) = \Big(\gamma_{1} (\re \xi)^{3} + \gamma_{1} (\re \eta)^{3} + \gamma_{2} (\re \xi)^{2} \im \xi + \gamma_{2} (\re \eta)^{2} \im \eta + \gamma_{3} \re \xi (\im \xi)^{2}+ \gamma_{3} \re \eta (\im \eta)^{2} \\
& + \gamma_{4} \re \xi + \gamma_{4} \re \eta + \gamma_{5} \im \xi + \gamma_{5} \im \eta \Big) \mathrm{k}(\xi,\eta) + \Big( \beta_{1} (\re \xi)^{2} + \overline{\beta_{1}} (\re \eta)^{2} + \beta_{2} (\re \xi)(\im \xi) \\
&  + \overline{\beta_{2}} (\re \eta)(\im \eta) + \beta_{3} (\im \xi)^{2} + \overline{\beta_{3}} (\im \eta)^{2} \Big) \Big( (\xi + \overline{\eta})\mathrm{k}(\xi,\eta) - \frac{e^{-2 (\re \xi)^{2}-2 (\re \eta)^{2}}}{2\sqrt{\pi}e^{2i(\re \xi \, \im \xi - \re \eta \, \im \eta)}} \Big) \\
& + \gamma_{6} (\xi + \overline{\eta})\mathrm{k}(\xi,\eta) + \gamma_{7} \frac{e^{-2 (\re \xi)^{2}-2 (\re \eta)^{2}}}{2\sqrt{\pi}e^{2i(\re \xi \, \im \xi - \re \eta \, \im \eta)}} + \Big( \beta_{4} \re \xi + \overline{\beta_{4}} \re \eta + \beta_{5} \im \xi + \overline{\beta_{5}} \im \eta \Big)  \\
& \times \bigg\{ (\xi + \overline{\eta})^{2}\mathrm{k}(\xi,\eta)- \frac{(\xi + \overline{\eta})e^{-2 (\re \xi)^{2}-2 (\re \eta)^{2}}}{2\sqrt{\pi}e^{2i(\re \xi \, \im \xi - \re \eta \, \im \eta)}} \bigg\} + \gamma_{8} \bigg\{ (\xi + \overline{\eta})^{3}\mathrm{k}(\xi,\eta) - \frac{(\xi + \overline{\eta})^{2}e^{-2 (\re \xi)^{2}-2 (\re \eta)^{2}}}{2\sqrt{\pi}e^{2i(\re \xi \, \im \xi - \re \eta \, \im \eta)}} \bigg\},
\end{align*}
for some coefficients $\beta_{1},\ldots,\beta_{5} \in \C$ and $\gamma_{1},\ldots,\gamma_{8}\in \R$ that depend smoothly on $z_{0}$. 
\end{theorem}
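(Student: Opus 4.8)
The plan is to follow the orthogonal-polynomial / Riemann–Hilbert approach of Hedenmalm and Wennman \cite{HW2021}, pushing the asymptotic expansion of the normalized orthonormal polynomials $p_{j,n}$ one order further, and then summing over $j$ to recover the kernel. The first step is to reduce the question about $K_n$ to a question about the polynomials of degree near the top, $j$ close to $n$, since the edge behavior is governed by these. Writing $K_n(z,w) = e^{-\frac{n}{2}Q(z)-\frac{n}{2}Q(w)}\sum_{j=0}^{n-1} p_{j,n}(z)\overline{p_{j,n}(w)}$, one localizes: for $z,w$ within distance $O(\sqrt{\log n}/\sqrt n)$ of a boundary point $z_0$, only the polynomials with $j = n - O(\sqrt n \log n)$ contribute non-negligibly, the rest being exponentially suppressed by the large deviation estimate built into \eqref{Q minus Qhat lower bounded} and Proposition \ref{prop:pointwise estimate}. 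The second step is to insert, for each such $j$, the refined asymptotic expansion of $p_{j,n}$ in the outer region and in a neighborhood of the edge. In \cite{HW2021} the polynomials are written, via a conformal map $\varphi$ taking the exterior of $\mathcal{S}$ to the exterior of the unit disk, in the form $p_{j,n}(z) \sim \big(\tfrac{n}{2\pi}\big)^{1/4}\sqrt{\varphi'(z)}\,\varphi(z)^{j}\,e^{n\mathcal{Q}(z)/2}\,\big(B_0(z) + n^{-1/2}B_{1/2}(z) + \cdots\big)$ for an appropriate ``bulk potential'' $\mathcal{Q}$ and smooth amplitude functions $B_0, B_{1/2},\ldots$ coming from the solution of a sequence of $\bar\partial$-problems (or an order-by-order RHP analysis). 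The key new input is to compute $B_{1/2}$ explicitly enough to identify its contribution to the kernel; this is where the subleading coefficients $\beta_1,\ldots,\beta_5,\gamma_1,\ldots,\gamma_8$ will be born, each being a polynomial expression in the Taylor coefficients of $Q$ (equivalently of $\psi$ and $b_0$) and of the conformal map at $z_0$.

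The third step is the actual summation. After the edge rescaling $z = z_0 + \mathrm{n}\,\tfrac{\sqrt 2\,\xi}{\sqrt{\Delta Q(z_0)n/4}}$, $w = z_0 + \mathrm{n}\,\tfrac{\sqrt 2\,\eta}{\sqrt{\Delta Q(z_0)n/4}}$, one writes $j = n + s\sqrt n$ (roughly), converts the sum over $j$ into a Riemann sum over $s$, and is left with a one-dimensional integral of a Gaussian times the expansions of the amplitudes. At leading order this reproduces the Faddeeva/erfc kernel $\mathrm{k}(\xi,\eta) = e^{2\xi\bar\eta - |\xi|^2 - |\eta|^2}\tfrac12\mathrm{erfc}(\xi+\bar\eta)$ exactly as in \cite{HW2021}. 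At the next order, the correction $B_{1/2}$ together with the quadratic and cubic terms in the Taylor expansion of $\psi$ around $z_0$ (which enter the phase $n\psi(z,\bar w) - \tfrac n2(Q(z)+Q(w))$ through the cubic remainder $\bigO(|z-w|^3)$ recorded after Theorem \ref{thm:Charlier}) produce the listed building blocks: terms of the form $(\re\xi)^a(\im\xi)^b \,\mathrm{k}(\xi,\eta)$, times $(\xi+\bar\eta)^m\mathrm{k}(\xi,\eta)$, and the ``boundary'' pieces $\tfrac{e^{-2(\re\xi)^2 - 2(\re\eta)^2}}{2\sqrt\pi\,e^{2i(\re\xi\,\im\xi - \re\eta\,\im\eta)}}$ that arise from integration by parts in the Gaussian integral (differentiating $\mathrm{erfc}$ produces exactly such a Gaussian exponential). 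The appearance of the fixed polynomial structure in $\xi,\eta$ with $z_0$-dependent coefficients is forced: all $z_0$-dependence beyond the explicit prefactor must enter through finitely many Taylor coefficients, which can only combine into the monomials displayed. The unimodular gauge functions $c_n(\xi;z_0)$ absorb the oscillatory factor $e^{-in\,\im L_{z_0}(\cdot)}$-type phases (the $-i\,\im L_z(w)$ term in the expansion of $\psi(z,\bar w) - \tfrac{Q(z)+Q(w)}{2}$), which is why the statement is only about $c_n(\xi)\overline{c_n(\eta)}K_n$; one constructs $c_n$ explicitly from the linear-and-quadratic part of the phase and checks it is unimodular and continuous.

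The fourth step is uniformity. One must carry the error estimates uniformly over $z_0\in\partial\mathcal S$ (which is legitimate since $\partial\mathcal S$ is a smooth Jordan curve, the conformal map and all relevant quantities depend smoothly on the arclength parameter, and all estimates in \cite{HW2021} are of this uniform nature) and uniformly for $|\xi|,|\eta|\le M\sqrt{\log n}$ — this last being the point where the present theorem genuinely extends \cite{HW2021}, which only treated bounded $\xi,\eta$. Gaining the $\sqrt{\log n}$ window requires tracking the polynomial growth of the amplitudes $B_k$ and of the tails of the Gaussian integrals carefully, which is why the error is stated as $\bigO(1+|\xi|^6+|\eta|^6)$ rather than $\bigO(1)$: the degree-$6$ growth is what one expects from two orders of corrections each contributing up to cubic polynomials in $\xi,\eta$ squared through the determinantal structure. \textbf{The main obstacle} I expect is precisely this combination of (a) extracting the explicit form of $B_{1/2}$ from the $\bar\partial$-hierarchy of \cite{HW2021} cleanly enough to read off all thirteen coefficients, and (b) propagating error bounds that are simultaneously uniform in the boundary point and polynomially controlled out to radius $M\sqrt{\log n}$; the leading-order matching and the Riemann-sum-to-integral step are essentially bookkeeping once \cite{HW2021} is in hand. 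A secondary technical point is the matching between the edge parametrix and the outer expansion in the summation variable $s$ — one needs the transition to be smooth enough that the Riemann sum converges to the integral with an error of the claimed order, which amounts to a standard Euler–Maclaurin estimate but with $n$-dependent, $\xi,\eta$-dependent integrand that must be controlled on the $\sqrt{\log n}$ scale.
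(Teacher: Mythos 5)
There is a genuine gap, and it lies exactly where you locate your ``key new input''. You propose to obtain the $\sqrt{n}$ term of the kernel from a half-order correction $B_{1/2}$ in an expansion $p_{j,n}\sim n^{1/4}\sqrt{\varphi'}\,\varphi^{j}e^{n\mathcal{Q}/2}(B_{0}+n^{-1/2}B_{1/2}+\cdots)$ of the individual polynomials. No such term exists: the expansion of \cite[Theorem 1.3]{HW2021} proceeds in integer powers of $n^{-1}$, so the relative error after the leading amplitude $\mathcal{B}_{\tau}$ is already $\bigO(n^{-1})$, and the $\bar\partial$-hierarchy you would interrogate for $B_{1/2}$ returns nothing at order $n^{-1/2}$. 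In the paper's proof the subleading term of $K_{n}$ is produced by entirely different mechanisms, all at the level of the \emph{sum} over the top $\bigO(\sqrt{n}\sqrt{\log n})$ degrees: (i) the $j$-dependence of the leading data, i.e.\ the family $\phi_{\tau}$, $\mathcal{Q}_{\tau}$, $\mathcal{B}_{\tau}$ and the moving droplet boundary $\partial\mathcal{S}_{\tau}$ with $\tau=(n-j)/n$, Taylor-expanded in $j/n$ and in $\xi/\sqrt{n}$ (your sketch with a single fixed conformal map $\varphi$ and the power $\varphi(z)^{j}$ erases precisely this structure); (ii) the cubic terms of the effective potential $Q-\breve{Q}_{\tau}$ transverse to $\partial\mathcal{S}_{\tau}$; and (iii) the Euler--Maclaurin boundary correction $-\tfrac12 f_{0}(0)\sqrt{n}$ incurred when the discrete sum over $j$ is replaced by a Gaussian integral, which is the source of a genuinely new pure-Gaussian contribution at order $\sqrt{n}$ (the $\gamma_{7}$-type term) that your plan does not account for: attributing all Gaussian pieces to integration by parts against the erfc misses this discrete half-term. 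Since the degrees $j$ that matter live on the scale $j\asymp\sqrt{n}$, the first-order variations in $j/n\asymp n^{-1/2}$ of the \emph{leading} amplitude and phase are exactly of the size $\sqrt{n}$ after summation; this, not a subleading polynomial amplitude, is where $\beta_{1},\ldots,\beta_{5},\gamma_{1},\ldots,\gamma_{8}$ come from.

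Two further points you would need and do not supply. First, the reason the paper can work in the window $|\xi|,|\eta|\le M\sqrt{\log n}$ and for two distinct arguments is that the polarization argument used in \cite[Corollary 1.7]{HW2021} fails at subleading order, so one must use the fact that the asymptotics of \cite[Theorem 1.3]{HW2021} hold to all orders in a full $\sqrt{\log n}/\sqrt{n}$-neighborhood of $\partial\mathcal{S}$ and redo the off-diagonal analysis directly; your outline implicitly assumes the diagonal-plus-polarization route still works. Second, several coefficients in the resulting expression (in the paper, the constants multiplying the normal and tangential oscillatory phases) cannot be read off directly and are pinned down by matching the leading order against \cite[Corollary 1.7]{HW2021}; without some such matching or an explicit computation of these phases, the leading term would not even reduce to $\frac{\Delta Q(z_{0})n}{4\pi}\mathrm{k}(\xi,\eta)$ in your scheme. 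The remaining elements of your plan (localization to top degrees, absorption of $j$-independent phases into $c_{n}$, uniformity in $z_{0}$ and Euler--Maclaurin control on the $\sqrt{\log n}$ scale) do agree with the paper, but as written the central computation would stall.
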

\begin{remark}\label{remark:weaker version of the main result}
We prove in Appendix \ref{appendix:k bounded} that $\mathrm{k}(\xi,\eta)$ is bounded for $\xi,\eta \in \C$. It then directly follows from the definition of $\mathrm{k}_{2}$ that there exists $C>0$ such that
\begin{align*}
|\mathrm{k}_{2}(\xi,\eta;z_{0})| \leq C(1+|\xi|^{3}+|\eta|^{3}), \qquad \mbox{for all } \xi,\eta \in \C \mbox{ and } z_{0}\in \partial \mathcal{S}.
\end{align*}
Therefore, \eqref{Charlier asymp} implies 
\begin{multline}\label{lol44}
c_{n}(\xi;z_{0})\overline{c_{n}(\eta;z_{0})}K_{n}\bigg(z_{0}+\mathrm{n} \frac{\sqrt{2}\xi}{\sqrt{\Delta Q(z_{0})n/4}},  z_{0}+\mathrm{n} \frac{\sqrt{2}\eta}{\sqrt{\Delta Q(z_{0})n/4}}\bigg) \\  = \frac{\Delta Q(z_{0})n}{4\pi}\mathrm{k}(\xi,\eta) + \bigO\big((1+|\xi|^{3}+|\eta|^{3})\sqrt{n}\big), \qquad \mbox{as } n \to + \infty
\end{multline}
uniformly for $|\xi|,|\eta| \leq M\sqrt{\log n}$ and $z_{0}\in \partial \mathcal{S}$. We mention however that \eqref{lol44} is not enough for our needs; indeed, the factor $1+|\xi|^{3}+|\eta|^{3}$ in the error term, which can grow like $\bigO((\log n)^{3/2})$, would cause other error terms in our proof to blow up. Such issues do not arise in the bulk, where the subleading term in the kernel expansion is $\bigO(1)$ (see Theorem \ref{thm:Ameur bulk-bulk correlation improved}). 
\end{remark}

\cite[Corollary 1.7]{HW2021} is a weaker version of Theorem \ref{thm:Charlier}, with ``$\mathrm{k}_{2}(\xi,\eta;z_{0})\sqrt{n}+\bigO(1+|\xi|^{6}+|\eta|^{6})$" replaced by ``$o(n)$", and with ``uniformly for $|\xi|,|\eta| \leq M \sqrt{\log n}$" replaced by ``uniformly for $\xi,\eta$ in compact subsets of $\C$". The result \cite[Lemma 1.3 with $z_{0}=w_{0}$]{MMOC2025}, which was obtained simultaneously and independently, gives an asymptotic formula of the form \eqref{lol44}, with ``$\bigO\big((1+|\xi|^{3}+|\eta|^{3})\sqrt{n}\big)$" replaced by ``$\bigO\big((\log n)^{3}\sqrt{n}\big)$". Nevertheless, as mentioned in Remark \ref{remark:weaker version of the main result}, such estimates are not enough for our needs. We also mention that an all order expansion of the correlation kernel for arbitrary $z_0\in\C$ along the diagonal $\xi=\eta$ will appear in the forthcoming work \cite{HW2025}.

\medskip The proof of \cite[Corollary 1.7]{HW2021} proceeds in two steps. First, the case $\xi=\eta$ is treated using a Riemann sum approximation based on the asymptotics of the associated planar orthogonal polynomials obtained in \cite[Theorem 1.3]{HW2021}. Second, the result is extended to distinct $\xi,\eta$ via a polarization argument. This polarization argument, however, does not apply to the subleading term. In our setting we therefore proceed differently: Theorem \ref{thm:Charlier} is proved in Appendix \ref{appendix:Proof of edge} without polarization arguments, relying instead on Riemann sum approximations and the fact that the asymptotics in \cite[Theorem 1.3]{HW2021} are given to all orders and are valid in a $\sqrt{\log n}/\sqrt{n}$-neighborhood of $\partial \mathcal{S}$. In Appendix \ref{appendix:consistency checks}, we provide two consistency checks of Theorem \ref{thm:Charlier} by comparing with known results in the literature for special choices of $Q$.

\section{Proofs of main results}\label{sec:proof}

In this section we prove Theorem \ref{thm1} and Corollary \ref{col1}. 

\medskip We first recall the following result from \cite{LM2024}.

\begin{proposition}\label{prop1}\cite[Proposition 2.9]{LM2024}
Let $\{\eta^{(n)}\}_{n=1}^{+\infty}$ be a sequence of point processes on $\mathbb{R}^+\times\mathbb{C}$, and let $\eta$ be a Poisson point process on $\mathbb{R}^+\times\mathbb{C}$ with intensity measure $\mu$ having no atoms. If $\eta^{(n)}(J)$ converges in distribution to $\eta(J)$ for all bounded Borel sets $J\subset \mathbb{R}^+\times\mathbb{C}$, then the sequence of point processes $\eta^{(n)}$ converges in distribution to $\eta$.
\end{proposition}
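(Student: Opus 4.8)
The plan is to combine a soft tightness argument with an identification of all subsequential limits via R\'enyi's characterization of Poisson processes. The point to keep in mind is that convergence in distribution of the scalar counts $\eta^{(n)}(J)$ does not by itself give convergence of the joint laws of several counts $(\eta^{(n)}(J_1),\dots,\eta^{(n)}(J_m))$, which is what weak convergence of point processes really requires; this gap is closed by the rigidity of a Poisson process with diffuse intensity, namely that such a process is uniquely determined by its void probabilities. Throughout I would work on the locally compact, $\sigma$-compact Polish space $S:=\mathbb{R}^+\times\mathbb{C}$, and I may assume $\mu$ is boundedly finite (otherwise $\eta$ is not a point process) in addition to being diffuse, so that $\eta\sim\mathrm{PPP}(\mu)$ is a.s.\ simple.

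First I would establish tightness. For every bounded Borel set $J\subset S$ the hypothesis gives $\eta^{(n)}(J)\xrightarrow{d}\eta(J)\sim\mathrm{Poi}(\mu(J))$, so $\sup_{n}\mathbb{P}(\eta^{(n)}(J)>R)\to 0$ as $R\to\infty$ and $\{\eta^{(n)}(J)\}_{n}$ is tight in $[0,\infty)$. Running this over a countable family of bounded sets covering $S$, the standard tightness criterion for random measures shows that $\{\eta^{(n)}\}_{n}$ is tight in the space of boundedly finite counting measures with the vague topology, so by Prokhorov's theorem every subsequence admits a further subsequence $(n_{k})$ along which $\eta^{(n_{k})}\xrightarrow{d}\zeta$ for some point process $\zeta$ on $S$.

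Next I would identify $\zeta$. The key ingredient is a dissecting ring $\mathcal{R}$ of bounded Borel sets that are all $\zeta$-continuity sets, i.e.\ $\zeta(\partial B)=0$ a.s. To produce it, fix $x\in S$ and $\epsilon>0$: since $\zeta$ is a.s.\ finite on $\overline{D(x,\epsilon)}$, the set $\{r\in(0,\epsilon):\zeta(\partial D(x,r))\geq 1\}$ is a.s.\ finite, hence Lebesgue-null, so by Fubini $\int_{0}^{\epsilon}\mathbb{P}(\zeta(\partial D(x,r))\geq 1)\,dr=0$ and therefore $\zeta(\partial D(x,r))=0$ a.s.\ for a.e.\ $r\in(0,\epsilon)$; the ring generated by such balls separates points, contains arbitrarily small neighbourhoods of each point, and still consists of $\zeta$-continuity sets because $\partial(A\cup B)$, $\partial(A\cap B)$ and $\partial(A\setminus B)$ are all contained in $\partial A\cup\partial B$, so this is the desired $\mathcal{R}$. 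For $B\in\mathcal{R}$ the continuous mapping theorem yields $\eta^{(n_{k})}(B)\xrightarrow{d}\zeta(B)$, while the hypothesis yields $\eta^{(n_{k})}(B)\xrightarrow{d}\eta(B)\sim\mathrm{Poi}(\mu(B))$, whence $\zeta(B)\stackrel{d}{=}\mathrm{Poi}(\mu(B))$ and in particular $\mathbb{P}(\zeta(B)=0)=e^{-\mu(B)}$ for all $B\in\mathcal{R}$. Since $\mu$ is diffuse and boundedly finite and $\mathcal{R}$ is a dissecting ring, R\'enyi's theorem forces $\zeta\sim\mathrm{PPP}(\mu)$, i.e.\ $\zeta$ has the law of $\eta$. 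As this holds for every subsequential vague limit, the whole sequence converges, and $\eta^{(n)}\xrightarrow{d}\eta$.

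I expect the only genuine difficulty to be confined to the identification step: one must ensure a dissecting ring of $\zeta$-continuity sets exists for every subsequential limit $\zeta$ simultaneously — the Fubini argument above does this with no moment assumption — and one must invoke the form of R\'enyi's theorem that deduces simplicity of $\zeta$ from the diffuseness of $\mu$ rather than presupposing it. One could alternatively try to quote a packaged Kallenberg criterion for convergence to a simple point process directly, but such criteria typically also demand $\mathbb{E}[\eta^{(n)}(B)]\to\mathbb{E}[\eta(B)]$ on a dissecting ring, which is not implied by distributional convergence of the counts without a uniform integrability input; the tightness-plus-R\'enyi route, which uses only the distributional hypothesis as stated, is therefore the safer one.
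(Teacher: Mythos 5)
Your proposal is correct in substance, but note that the paper itself does not prove this proposition: it is quoted from \cite{LM2024} (Proposition 2.9 there, stated for point processes on $\R^{+}$), with the remark that the proof carries over unchanged to $\R^{+}\times\C$. Your argument is therefore a genuinely self-contained alternative: tightness of the random measures from tightness of the counts $\eta^{(n)}(J)$, extraction of vague subsequential limits, a Fubini construction of a dissecting ring of stochastic continuity sets, and identification of every subsequential limit through its one-dimensional marginals. The mechanism you isolate --- a point process whose counts on a dissecting ring are Poisson with diffuse intensity must be \emph{the} Poisson process --- is exactly the rigidity that makes the statement true, and your observation that packaged Kallenberg-type convergence criteria would in addition require $\E[\eta^{(n)}(B)]\to\E[\eta(B)]$, which the hypothesis does not supply without a uniform integrability input, is well taken; this is what makes the subsequence-plus-identification route the right one here.

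One imprecision in the identification step should be repaired. Void probabilities alone never determine a non-simple point process (if $N$ is Poisson with intensity $\mu$, the doubled process $2N$ has the same avoidance function), so there is no version of R\'enyi's theorem that deduces simplicity of $\zeta$ from $\PP(\zeta(B)=0)=e^{-\mu(B)}$ and diffuseness of $\mu$ alone, as your closing paragraph suggests. Fortunately you have proved more than the voids: $\zeta(B)$ is Poisson$(\mu(B))$ for every $B$ in the ring, hence $\PP(\zeta(B)\geq 2)\leq C\mu(B)^{2}$; partitioning a bounded set into pieces of small $\mu$-measure (possible because $\mu$ has no atoms) and summing these bounds shows $\zeta$ is almost surely simple. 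With simplicity in hand, the R\'enyi--M\"onch uniqueness theorem for simple point processes, applied to the void probabilities on your dissecting ring, gives $\zeta\sim\mathrm{PPP}(\mu)$, and the remainder of your argument goes through unchanged.
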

Note that \cite[Proposition 2.9]{LM2024} is stated for a sequence of point processes on $\R^{+}$, but their proof carries over without change to $\R^{+}\times\mathbb{C}$.

\medskip Consider the point process $\chi^{(n)} = \sum_{i=1}^{n-1}\delta_{(n^{3/4}|z_{i^*}-z_i|, z_i)}$, where $z_{1},\ldots,z_{n}$ are drawn from \eqref{general density intro} and indexed so that $z_{1}\preceq \ldots \preceq z_{n}$, and $i^* = \arg\min_{j = i+1}^n\{|z_j-z_i|\}$. By Proposition \ref{prop1}, to prove Theorem \ref{thm1} we need to establish that for any fixed bounded Borel sets $A\subset (0,+\infty)$ and $\Omega\subset\mathbb{C}$,
\begin{equation}\label{eqn2}
\chi^{(n)}(A\times \Omega)\oset{\mathrm{law}}{\underset{n\to \infty}{\xrightarrow{\hspace*{0.85cm}}}}\mathrm{Poisson}(\lambda_{A\times \Omega}),  
\end{equation}
where
\begin{align}\label{def of lambda A Omega}
\lambda_{A\times \Omega} =  \biggl(\pi^{2}\int_{\Omega\cap \mathcal{S}}\rho(v)^{3}d^{2}v\biggl)\biggl(\int_B|z|^2\frac{d^{2}z}{\pi}\biggl),
\end{align}
and $B = \{u\in\mathbb{C}: |u|\in A, u\succ 0\}$.

\medskip Let $A_n = n^{-3/4}A = \{n^{-3/4}a: a\in A\}$, let $B_n = n^{-3/4}B$, and consider the point processes
\begin{align*}
\xi^{(n)} := \sum_{i=1}^{n}\delta_{z_i}, \qquad \xi^{(n)}_{A} := \sum_{i=1}^{n}\delta_{z_i} \mathbbm{1}_{\{\xi^{(n)}(z_i+B_n) = 1\}}.
\end{align*}
We also let $\{\rho_{k}\}_{k \geq 1}$ and $\{\rho_{k}^{A}\}_{k \geq 1}$ denote the correlation functions of $\xi^{(n)}$ and $\xi^{(n)}_{A}$, respectively, where we drop the dependence in $n$ for conciseness. 

\medskip The thinned point process $\xi^{(n)}_{A}$ is not determinantal in general, but its correlation functions $\{\rho_{k}^{A}\}_{k\geq 1}$ can still be expressed in terms of $\{\rho_{k}\}_{k \geq 1}$, as shown by Soshnikov \cite[(4.5)]{Soshnikov} (see also \eqref{eqn5} below). There is therefore an advantage in considering $\xi^{(n)}_{A}$ instead of $\chi^{(n)}$. Since
\begin{align*}
& \chi^{(n)}(A\times \Omega) = \#\{z_i\in \Omega: |z_{i^*} - z_i|\in A_n\}, \\
& \xi^{(n)}_{A}(\Omega) = \#\{z_i\in \Omega: \mbox{ there exists a unique }z_j \in z_i + B_n \},
\end{align*}
the event $\{\chi^{(n)}(A\times \Omega)\neq \xi^{(n)}_{A}(\Omega)\}$ arises only if three or more eigenvalues cluster together, which is rare. Hence, we expect $\chi^{(n)}(A\times \Omega)$ and $\xi^{(n)}_{A}(\Omega)$ to be asymptotically equivalent in distribution. This fact is proved in the next lemma.


\begin{lemma}\label{lemma2}
For any bounded Borel set $A\subset(0,+\infty)$ and (possibly unbounded) Borel set $\Omega \subset\C$,
\begin{align*}
\chi^{(n)}(A\times \Omega)-\xi^{(n)}_{A}(\Omega)\oset{\mathrm{law}}{\underset{n\to \infty}{\xrightarrow{\hspace*{0.85cm}}}} 0.
\end{align*}
\end{lemma}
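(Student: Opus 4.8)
The plan is to show that $\chi^{(n)}(A\times\Omega)$ and $\xi^{(n)}_A(\Omega)$ differ only on an event whose probability tends to $0$, by controlling the expected number of ``triple clusters''. First I would observe that the two counts are both sums of indicators over the $z_i$'s and that a discrepancy at a given point $z_i$ can only arise from a configuration in which $z_i$ has at least two other particles within distance $\mathrm{diam}(B_n)=\bigO(n^{-3/4})$ of it (more precisely: either $|z_{i^*}-z_i|\in A_n$ but $z_i+B_n$ contains two or more particles, or $z_i+B_n$ contains a unique particle but that particle's displacement from $z_i$ does not land in $B_n$ because $z_i+B_n$ is only a subset of the relevant annular region — in all cases at least three particles lie within an $\bigO(n^{-3/4})$-ball). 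Hence, writing $r_n$ for a fixed multiple of $n^{-3/4}$ chosen so that every relevant near-collision fits in a disk of radius $r_n$, one has the pointwise bound
\begin{align*}
|\chi^{(n)}(A\times\Omega)-\xi^{(n)}_A(\Omega)|\le \#\big\{(i,j,k)\text{ distinct}: z_j,z_k\in D(z_i,r_n)\big\},
\end{align*}
and by Markov's inequality it suffices to prove that the expectation of the right-hand side vanishes as $n\to\infty$.

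The next step is to estimate this expected triple-count using the determinantal structure. By \eqref{def of DPP}, $\mathbb{E}[\#\{(i,j,k)\ \text{distinct}: z_j,z_k\in D(z_i,r_n)\}]$ equals $\int\!\!\int\!\!\int \mathbf 1_{|z-w|\le r_n}\mathbf 1_{|z-u|\le r_n}\,\rho_3(z,w,u)\,d^2z\,d^2w\,d^2u$. Since $\rho_3$ is a $3\times 3$ determinant of kernel values and $K_n(x,x)=\bigO(n)$ uniformly (which follows from Proposition \ref{prop:pointwise estimate} on $\mathcal S$ and the fast decay away from it), Hadamard's inequality gives $\rho_3(z,w,u)=\bigO(n^3)$ uniformly. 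Meanwhile the region of integration in $(w,u)$ for fixed $z$ has volume $\bigO(r_n^4)=\bigO(n^{-3})$, and the $z$-integral over (a neighborhood of) $\mathcal S$ contributes an $\bigO(1)$ factor. Thus the expected triple-count is $\bigO(n^{3}\cdot n^{-3})=\bigO(1)$, which is \emph{not} yet enough — we need it to be $o(1)$. To get the extra decay I would use cancellation in the determinant: on the scale $|z-w|\sim n^{-1/2}$ the leading terms of $\rho_3$ cancel, and here the relevant points are at distance $\bigO(n^{-3/4})\ll n^{-1/2}$, so the determinant is much smaller than the crude $\bigO(n^3)$ bound. Concretely, using the bulk asymptotics of Theorem \ref{thm:Ameur bulk-bulk correlation improved} (and, near $\partial\mathcal S$, Theorem \ref{thm:Charlier} together with Theorem \ref{thm:bulk damping} and Proposition \ref{prop:pointwise estimate} to handle the edge contribution, which is negligible), one expands $\rho_3$ for $z,w,u$ all within $\bigO(n^{-3/4})$ and finds $\rho_3(z,w,u)=\bigO(n^{3}\cdot (n^{1/2}|z-w|)^{?}\cdots)$; tracking the powers, the Vandermonde-type vanishing $\prod_{\text{pairs}}|z-w|^2$ that appears after factoring the exponential Gaussian part out of the determinant produces a factor $\bigO((n^{1/2}r_n)^{\,2\cdot 3})$ from the three pairs, i.e. an extra $\bigO(n^{-3/2})$. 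This yields $\mathbb{E}[\text{triple-count}]=\bigO(n^{3}\cdot n^{-3}\cdot n^{-3/2})=\bigO(n^{-3/2})\to 0$, which suffices.

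Finally, having shown $\mathbb{E}[\,|\chi^{(n)}(A\times\Omega)-\xi^{(n)}_A(\Omega)|\,]\to 0$, convergence in probability to $0$ follows from Markov's inequality, and convergence in probability implies convergence in distribution to the constant $0$. For the case of unbounded $\Omega$ one reduces to the bounded case by noting that for $z_i$ far outside any neighborhood of $\mathcal S$ the density $\rho_1$ is super-exponentially small (Proposition \ref{prop:pointwise estimate} combined with \eqref{Q minus Qhat lower bounded}), so the contribution of $z_i\notin \mathcal N$ to either count has vanishing expectation; thus it is enough to treat $\Omega\cap\mathcal N$, which is bounded. I expect the main obstacle to be the power-counting in the determinant expansion: the naive Hadamard bound is exactly at the borderline $\bigO(1)$, so one genuinely has to exploit the near-collision cancellation in $\rho_3$ (equivalently, factor out the Gaussian exponential and extract the Vandermonde vanishing), and one must do this uniformly down to the edge $\partial\mathcal S$, which is precisely why the refined edge estimate of Theorem \ref{thm:Charlier}, or at least the pointwise bounds of Proposition \ref{prop:pointwise estimate} and Theorem \ref{thm:bulk damping}, are needed there.
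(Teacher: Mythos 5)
Your proposal follows essentially the same route as the paper: bound the discrepancy pointwise by the number of triples of distinct particles lying within $\bigO(n^{-3/4})$ of one another, show the expectation of that triple count vanishes using the determinantal structure with a bulk/edge/exterior splitting, and beat the borderline Hadamard bound $\bigO(1)$ by exploiting the near-collision cancellation in $\rho_3$ (the paper does this via the explicit computation $\mathrm{det}_{3}^{(1)}=\bigO(n^{-1})$, $\mathrm{det}_{3}^{(2)}=\bigO(n^{-1})$, i.e.\ $\rho_3=\bigO(n^2)$ in the bulk). One caveat: the full Vandermonde gain $\bigO\big((n^{1/2}r_n)^{6}\big)=\bigO(n^{-3/2})$ you invoke is not extractable from Theorem \ref{thm:Ameur bulk-bulk correlation improved}, whose additive $\bigO(1)$ error in each kernel entry caps the cancellation at a factor $n^{-1}$ — but any $o(1)$ gain suffices, so your argument still closes; likewise, near $\partial\mathcal{S}$ no refined asymptotics (in particular no Theorem \ref{thm:Charlier}) are needed here, since the crude bound $\rho_3=\bigO(n^3)$ combined with the $\bigO(\epsilon_n)$ measure of the edge strip already does the job, exactly as in the paper.
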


%
%

\begin{proof} 
The proof follows the ideas of \cite[Proof of Lemma 3.3]{SJ2012} and uses some estimates from Section \ref{sec:background}. Fix $\tau>0$ such that $A\subset (0, \tau)$ and define $\tau_n = \tau n^{-\frac{3}{4}}$. For $z_{0}\in \C$  and $\epsilon>0$, let
\begin{align*}
D^{+}(z_{0},\epsilon) = \{z\in \C : |z-z_{0}| \leq \epsilon, \; z_{0} \preceq z\}.
\end{align*}
We start by proving that if 
\begin{align}\label{1 is not 1}
\mathbbm{1}_{\{z_{i^*}\in z_i+B_n\}}\neq\mathbbm{1}_{\{\xi^{(n)}(z_i+B_n)=1\}},
\end{align}
then $\xi^{(n)}(D^+(z_{i}, \tau_n))\geq 3$. Indeed, suppose $z_{i^*}\in z_i+B_n$ while $\xi^{(n)}(z_i+B_n)\neq1$. Then $\xi^{(n)}(z_i+B_n)\geq 2$, which implies $\xi^{(n)}(D^+(z_{i}, \tau_{n}))\geq 3$ since $z_i+B_n \subset D^+(z_{i}, \tau_{n})$ and $z_{i}\in D^+(z_{i}, \tau_{n})\setminus z_i+B_n$. Conversely, suppose $\xi^{(n)}(z_i+B_n)=1$ while $z_{i^{*}}\notin z_{i}+B_{n}$. Then we must have $z_j\in z_{i}+B_n\subset D^+(z_{i}, \tau_{n})$ for some $j \notin \{i, i^{*}\}$ and $z_{i^*}\in D^+(z_{i}, \tau_{n})\setminus (z_{i}+B_{n})$. Thus \eqref{1 is not 1} indeed implies $\xi^{(n)}(D^+(z_i, \tau_{n})) \geq 3$. 

From the above, we conclude that
\begin{multline}
 |\chi^{(n)}(A\times \Omega)-\xi^{(n)}_{A}(\Omega)| = \bigg|\sum_{i=1}^{n-1}\mathbbm{1}_{\{z_{i}\in \Omega\}}\mathbbm{1}_{\{z_{i^*}\in z_i+B_{n}\}} - \mathbbm{1}_{\{z_{i}\in \Omega\}}\mathbbm{1}_{\{\xi^{(n)}(z_i+B_n) = 1\}} \bigg| \\
\leq \sum_{i=1}^{n-1} \mathbbm{1}_{\{z_{i}\in \Omega\}} \mathbbm{1}_{\{\xi^{(n)}(D^+(z_i, \tau_{n}))\geq 3\}}\leq\Xi^{(3)}(\mathcal{E}), \label{lol25}
\end{multline}
where 
\begin{align*}
\Xi^{(3)} = \sum_{z_{i_1}, z_{i_2}, z_{i_3} \text{ distinct}}\delta_{(z_{i_1},z_{i_2},z_{i_3})}
\end{align*}
and
\begin{align*}
\mathcal{E} = \{(z, x_1, x_2): z\in \Omega \text{ and } (x_1, x_2)\in D^+(z, \tau_{n})^2\}.
\end{align*}
Since convergence in $L^1$ implies convergence in distribution, by \eqref{lol25} it is enough to show that $\mathbb{E}(\Xi^{(3)}(\mathcal{E}))\to 0$ as $n\to +\infty$. Using \eqref{def of DPP} and \eqref{def of rhok}, we obtain 
\begin{equation}\label{v3.3}
\mathbb{E}(\Xi^{(3)}(\mathcal{E})) = \int_{\Omega}d^{2}z\int_{D^+(z, \tau_{n})^2}\rho_3(z, x_1, x_2)d^{2}x_1d^{2}x_2
\end{equation}
where $\rho_3$ is given by
\begin{align}
& \rho_3(z, x_1, x_2) = \det \begin{pmatrix}
K_{n}(z,\overline{z}) & K_{n}(z,\overline{x_{1}}) & K_{n}(z,\overline{x_{2}}) \\[0.15cm]
K_{n}(x_{1},\overline{z}) & K_{n}(x_{1},\overline{x_{1}}) & K_{n}(x_{1},\overline{x_{2}}) \\[0.25cm]
K_{n}(x_{2},\overline{z}) & K_{n}(x_{2},\overline{x_{1}}) & K_{n}(x_{2},\overline{x_{2}})
\end{pmatrix}. \label{v3.2 rho1}
\end{align} 
Let $\epsilon_{n}:=M \sqrt{\log n}/\sqrt{n}$, where $M$ is a fixed constant satisfying $M \geq 100/\sqrt{\min_{\mathcal{S}}\Delta Q}>0$ so that the conclusion of Theorem \ref{thm:Ameur bulk-bulk correlation improved} holds. We will analyze \eqref{v3.3} by splitting $\Omega$ in the following three regions:
\begin{align*}
& \Omega_{\mathrm{b}} = \{z\in \Omega \cap \mathcal{S} : \mathrm{dist}(z,\partial \mathcal{S}) \geq \epsilon_{n}\}, \\
& \Omega_{\mathrm{e}} = \{z\in \Omega : \mathrm{dist}(z,\partial \mathcal{S}) < \epsilon_{n}\}, \\
& \Omega_{\mathrm{ext}} = \{z\in \Omega \setminus \mathcal{S} : \mathrm{dist}(z,\partial \mathcal{S}) \geq \epsilon_{n}\}.
\end{align*}
The sets $\Omega_{\mathrm{b}}$, $\Omega_{\mathrm{e}}$ and $\Omega_{\mathrm{ext}}$ correspond to the parts of $\Omega$ lying in the bulk, edge, and exterior region, respectively. We start with $\Omega_{\mathrm{b}}$.

\underline{The contribution to \eqref{v3.3} from $\Omega_{\mathrm{b}}$.} Using Theorem \ref{thm:Ameur bulk-bulk correlation improved} and \eqref{v3.2 rho1}, we get $\rho_3(z, x_1, x_2) = n^{3} \mathrm{det}_{3}$, where $\mathrm{det}_{3}$ satisfies
\begin{align*}
\mathrm{det}_{3} = \det \begin{pmatrix}
\tilde{b}_{0}(z,\overline{z})+\bigO(\frac{1}{n}) & \tilde{b}_{0}(z,\overline{x_{1}})+\bigO(\frac{1}{n}) & \tilde{b}_{0}(z,\overline{x_{2}})+\bigO(\frac{1}{n}) \\[0.15cm]
\tilde{b}_{0}(x_{1},\overline{z})+\bigO(\frac{1}{n}) & \tilde{b}_{0}(x_{1},\overline{x_{1}})+\bigO(\frac{1}{n}) & \tilde{b}_{0}(x_{1},\overline{x_{2}})+\bigO(\frac{1}{n}) \\[0.25cm]
\tilde{b}_{0}(x_{2},\overline{z})+\bigO(\frac{1}{n}) & \tilde{b}_{0}(x_{2},\overline{x_{1}})+\bigO(\frac{1}{n}) & \tilde{b}_{0}(x_{2},\overline{x_{2}})+\bigO(\frac{1}{n})
\end{pmatrix}, \qquad \mbox{as } n \to + \infty,
\end{align*}
uniformly for $z\in \Omega_{\mathrm{b}}$ and $x_{1},x_{2}\in D^{+}(z,\tau_{n})$, with
\begin{align}\label{def of b0t}
\tilde{b}_{0}(z,\overline{w}) = \frac{b_{0}(z,\overline{w})}{\pi}e^{n \psi(z,\overline{w}) -n \frac{Q(z)+Q(w)}{2}}.
\end{align}
Define also
\begin{align*}
& \mathrm{det}_{3}^{(1)} := \det \begin{pmatrix}
\tilde{b}_{0}(z,\overline{z}) & \tilde{b}_{0}(z,\overline{x_{1}}) & \tilde{b}_{0}(z,\overline{x_{2}}) \\[0.15cm]
\tilde{b}_{0}(x_{1},\overline{z}) & \tilde{b}_{0}(x_{1},\overline{x_{1}}) & \tilde{b}_{0}(x_{1},\overline{x_{2}}) \\[0.25cm]
\tilde{b}_{0}(x_{2},\overline{z}) & \tilde{b}_{0}(x_{2},\overline{x_{1}}) & \tilde{b}_{0}(x_{2},\overline{x_{2}})
\end{pmatrix},  \\
& \mathrm{det}_{3}^{(2)} := \mathrm{det}_{3} - \mathrm{det}_{3}^{(1)}.
\end{align*}
A direct computation, which uses $\partial_{z}L_{z}(\lambda,\lambda)+\partial_{w}L_{z}(\lambda,\lambda)=0$, shows that $\mathrm{det}_{3}^{(1)} = \bigO(n^{-1})$ as $n\to + \infty$. Moreover, we clearly have $\mathrm{det}_{3}^{(2)} = \bigO(n^{-1})$ as $n\to + \infty$. Hence, as $n\to + \infty$,
\begin{align}
& \rho_3(z, x_1, x_2) = \bigO( n^{2} ), \nonumber \\
& \int_{\Omega_{\mathrm{b}}} d^{2}z \int_{D^+(z, \tau_{n})^2} \rho_3(z, x_1, x_2)d^{2}x_1d^{2}x_2 = \bigO ( n^{-1} ). \label{contr Omega1}
\end{align}
\underline{The contribution to \eqref{v3.3} from $\Omega_{\mathrm{e}}$.} By assumption, $\partial \mathcal{S}$ is a smooth Jordan curve, and therefore $\Omega_{\mathrm{e}}$ is of size $\epsilon_{n}$ as $n\to + \infty$. On the other hand, \eqref{v3.2 rho1}, \eqref{Q minus Qhat lower bounded} and Proposition \ref{prop:pointwise estimate} imply that
\begin{align}\label{estimate rho3 general}
|\rho_{3}(z,x_{1},x_{2})| \leq 6 \, C_{1}^{3} n^{3} e^{-\frac{3n}{2}\min_{z_{*}\in\{z,x_{1},x_{2}\}}(Q(z_{*})-\widehat{Q}(z_{*}))}, \qquad z,x_{1},x_{2} \in \C.
\end{align}
Since $Q-\widehat{Q}\geq 0$ by \eqref{Q minus Qhat lower bounded}, we have $\rho_{3}(z,x_{1},x_{2}) = \bigO(n^{3})$ as $n\to + \infty$ uniformly for $z,x_{1},x_{2}\in \C$. This implies
\begin{align}
\bigg|\int_{\Omega_{\mathrm{e}}} d^{2}z \int_{D^+(z, \tau_{n})^2} \rho_3(z, x_1, x_2)d^{2}x_1d^{2}x_2\bigg| \leq C  \int_{\Omega_{\mathrm{e}}}d^{2}z = \bigO(\epsilon_{n}), \qquad \mbox{as } n \to + \infty. \label{contr Omega2}
\end{align}
\underline{The contribution to \eqref{v3.3} from $\Omega_{\mathrm{ext}}$.} Using \eqref{estimate rho3 general} and \eqref{Q minus Qhat lower bounded}, we get
\begin{multline*}
\bigg|\int_{\Omega_{\mathrm{ext}}} d^{2}z \int_{D^+(z, \tau_{n})^2} \rho_3(z, x_1, x_2)d^{2}x_1 d^{2}x_2\bigg| \\
\leq Cn^{\frac{3}{2}}e^{-cn \epsilon_{n}^{2}}\int_{\Omega_{\mathrm{ext}}} d^{2}z \int_{D^+(z, \tau_{n})^2} |\rho_3(z, x_1, x_2)|^{1/2}d^{2}x_1d^{2}x_2.
\end{multline*}
for some $C,c>0$ and all sufficiently large $n$. The bound \eqref{estimate rho3 general}, together with the growth condition \eqref{growth of $Q$} and the fact that $\widehat{Q}(z) = 2\log |z|$ as $z\to \infty$ (which directly follows from \eqref{def of Qhat}), also implies that
\begin{align*}
\rho_3(z, x_1, x_2) = \bigO(n^{3}|z|^{-\frac{\epsilon}{50}n}), \qquad \mbox{as } z \to \infty,
\end{align*}
uniformly for $x_{1},x_{2}\in D^+(z, \tau_{n})$. Increasing $M>0$ if necessary, we thus find
\begin{multline}
\bigg|\int_{\Omega_{\mathrm{ext}}} d^{2}z \int_{D^+(\lambda, \tau_{n})^2} \rho_3(z, x_1, x_2)d^{2}x_1d^{2}x_2\bigg| \\ \leq C_{2} n^{\frac{3}{2}}e^{-cn \epsilon_{n}^{2}}\int_{\Omega_{\mathrm{ext}}} d^{2}z \int_{D^+(z, \tau_{n})^2} n^{\frac{3}{2}}(1+|z|^{-\frac{\epsilon}{100}n}) d^{2}x_1d^{2}x_2 \leq C_{3} e^{-cn \epsilon_{n}^{2}} = \bigO(n^{-100}), \label{contr Omega3}
\end{multline}
as $n\to + \infty$ for some $C_{2},C_{3}>0$. The claim now directly follows from \eqref{contr Omega1}, \eqref{contr Omega2} and \eqref{contr Omega3}.
\end{proof}

Our next task is to prove that 
\begin{align}\label{conv of xi A Omega to Poisson}
\xi^{(n)}_{A}(\Omega)\oset{\mathrm{law}}{\underset{n\to \infty}{\xrightarrow{\hspace*{0.85cm}}}}\mathrm{Poisson}(\lambda_{A\times \Omega})
\end{align}
holds for any bounded Borel sets $A\subset (0,+\infty)$ and $\Omega \subset \C$. By Lemma \ref{lemma2}, $\chi^{(n)}(A\times \Omega)$ and $\xi^{(n)}_{A}(\Omega)$ have the same asymptotic distribution, so this would imply \eqref{eqn2} as desired. To establish \eqref{conv of xi A Omega to Poisson}, we will follow the same strategy as in \cite{SJ2012}: we will show that the factorial moments of $\xi^{(n)}_{A}(\Omega)$ converge to that of a Poisson random variable with intensity $\lambda_{A\times \Omega}$, i.e. that
\begin{equation}\label{eqn3}
\lim_{n\to + \infty}\mathbb{E}\biggl[\frac{\xi^{(n)}_{A}(\Omega)!}{(\xi^{(n)}_{A}(\Omega)-k)!}\biggl] = \lambda_{A\times \Omega}^{k}, \qquad \mbox{ for all } k\geq 1.
\end{equation}
For a proof that \eqref{eqn3} indeed implies \eqref{conv of xi A Omega to Poisson}, see e.g. \cite[Proposition 2.8]{LM2024}. By \eqref{def of DPP 2} with $m=1$, $k_{1}=k$ and $\Lambda_{1}=\Omega$, \eqref{eqn3} can be rewritten as
\begin{equation}\label{v3.5}
\lim_{n\to + \infty}\int_{\Omega^k}\rho_k^{A}(w_1, \ldots, w_k)d^{2}w_1\ldots d^{2}w_k  = \lambda_{A\times \Omega}^k, \qquad \mbox{ for all } k\geq 1.
\end{equation}
To establish (\ref{v3.5}) for any bounded Borel sets $A\subset (0,+\infty)$ and $\Omega \subset \C$, we will justify each of the following equalities:
\begin{align}
& \lim_{n\to + \infty}\int_{\Omega^k}\rho_k^{A}(w_1, \ldots, w_k)d^{2}w_1\ldots d^{2}w_k = \lim_{n\to + \infty}\int_{\Omega_{0}^k}\rho_k^{A}(w_1, \ldots, w_k)d^{2}w_1\ldots d^{2}w_k \nonumber \\
& = \int_{\Omega^{k}}\lim_{n\to + \infty}\mathbbm{1}_{\Omega_{0}^k}\rho_k^{A}(w_1, \ldots, w_k)d^{2}w_1\ldots d^{2}w_k \nonumber \\
& = \bigg(\frac{1}{\pi}\int_B|u|^2d^{2}u\bigg)^k \int_{\Omega^k} \prod_{i=1}^k \pi^{2}\rho(w_{i})^{3} d^{2}w_1\ldots d^{2}w_k = \lambda_{A\times \Omega}^k, \label{strategy of proof}
\end{align}
where we recall from \eqref{def of mu and rho} that $\rho(z)=\frac{\Delta Q(z)}{4\pi} \mathbf{1}_{\mathcal{S}}(z)$, and where
\begin{align*}
\Omega_{0}^{k} := \{(w_1, \ldots, w_k)\in \Omega^k: (w_i+B_n)\cap(w_j+B_n) = \emptyset \mbox{ for all } 1\leq i, j\leq k\}.
\end{align*}
The last equality in \eqref{strategy of proof} directly follows from the definition \eqref{def of lambda A Omega}. Hence, it only remains to justify the first three equalities in \eqref{strategy of proof}.

We begin with the third equality, using the following lemma from \cite{SJ2012}.

\begin{lemma}\label{lemma1}\cite[Lemma 3.2]{SJ2012} 
For any $n\times n$ Hermitian positive definite matrix $M$ and set $\omega\subseteq\{1, 2, \ldots, n\}$, we have
\begin{align*}
\det(M)\leq\det(M_\omega)\det(M_{\omega^{c}}),
\end{align*}
where $M_\omega$ (resp. $M_{\omega^{c}}$) denotes the submatrix of $M$ using rows and columns indexed by $\omega$ (resp. $\{1, \ldots, n\}\setminus\omega$).
\end{lemma}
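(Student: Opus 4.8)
The plan is to establish this as the classical \emph{Fischer inequality}. First I would reduce to block form: by simultaneously permuting the rows and columns of $M$ (which changes neither $\det M$ nor $\det M_{\omega}$ nor $\det M_{\omega^{c}}$, and preserves Hermitian positive definiteness) we may assume $\omega = \{1,\ldots,m\}$, and write
\[
M = \begin{pmatrix} A & B \\ B^{*} & C \end{pmatrix}, \qquad A = M_{\omega}, \quad C = M_{\omega^{c}}.
\]
Since $M\succ 0$, every principal submatrix is positive definite, so $A\succ 0$ and $C\succ 0$; in particular $A^{-1}$ exists, the Schur complement $S := C - B^{*}A^{-1}B$ is well defined, and the block factorization
\[
M = \begin{pmatrix} I & 0 \\ B^{*}A^{-1} & I \end{pmatrix}\begin{pmatrix} A & 0 \\ 0 & S \end{pmatrix}\begin{pmatrix} I & A^{-1}B \\ 0 & I \end{pmatrix}
\]
yields $\det M = \det A \cdot \det S$. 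Because the two triangular factors are mutually conjugate-transpose and invertible, $\operatorname{diag}(A,S)$ is congruent to $M$, hence positive definite, so $S\succ 0$.

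Second, I would compare $S$ with $C$. As $A^{-1}\succ 0$ we have $B^{*}A^{-1}B\succeq 0$, hence $S = C - B^{*}A^{-1}B \preceq C$ in the Loewner order, with both matrices positive definite. It then remains to use monotonicity of the determinant on the positive-definite cone: if $0\prec S\preceq C$ then $\det S\le\det C$. I would prove this last point by writing $C = S^{1/2}\big(I + S^{-1/2}(C-S)S^{-1/2}\big)S^{1/2}$, so that $\det C = \det S\cdot\det(I+E)$ with $E := S^{-1/2}(C-S)S^{-1/2}\succeq 0$; since all eigenvalues of $E$ are $\ge 0$, $\det(I+E)\ge 1$, giving $\det C\ge\det S$. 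Combining the two steps, $\det M = \det A\cdot\det S \le \det A\cdot\det C = \det M_{\omega}\det M_{\omega^{c}}$, as claimed.

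I do not expect a genuine obstacle here, since this is a standard linear-algebra inequality; the only points needing a brief justification are that a principal submatrix of a positive-definite matrix is again positive definite — so that the Schur complement and the factorization above make sense — and the determinant-monotonicity step, both elementary. As an alternative one-line route, one may observe that if $X$ is a centered complex Gaussian vector with covariance $M$, then subadditivity of differential entropy gives $h(X)\le h(X_{\omega}) + h(X_{\omega^{c}})$, and since the entropy of a Gaussian is an increasing affine function of $\log\det$ of its covariance, the inequality follows at once; but the Schur-complement argument is self-contained and is the one I would write out in full.
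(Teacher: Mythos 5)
Your proof is correct: it is the standard Schur-complement proof of Fischer's inequality, and every step (permutation reduction, positive definiteness of the principal blocks $A$ and $C$, the factorization $\det M=\det A\cdot\det S$ with $S=C-B^{*}A^{-1}B\succ 0$, the Loewner comparison $S\preceq C$, and determinant monotonicity via $\det(I+E)\geq 1$) is sound. Note that the paper itself offers no proof of this lemma --- it is quoted verbatim from \cite[Lemma 3.2]{SJ2012} as a known classical fact --- so there is no in-paper argument to compare against; your write-up would serve as a complete self-contained justification. The only cosmetic point worth adding is the trivial case $\omega=\emptyset$ or $\omega=\{1,\ldots,n\}$, where one of the submatrices is empty and the inequality holds as an equality with the convention that the determinant of an empty matrix equals $1$.
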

We now use Lemma \ref{lemma1} to prove the third equality in \eqref{strategy of proof}.
\begin{lemma}\label{lemma3}
(The third equality in \eqref{strategy of proof}) Let $A\subset (0,+\infty)$ and $\Omega \subset \C$ be bounded Borel sets, and let $w_1, \ldots, w_k \in \C\setminus \partial \mathcal{S}$ be pairwise distinct. Then
\begin{align}\label{lol16}
& \lim_{n\to + \infty} \rho_k^{A}(w_1, \ldots, w_k) = \bigg(\displaystyle\frac{1}{\pi}\int_B|u|^2d^{2}u\bigg)^k \; \prod_{i=1}^k \pi^{2}\rho(w_{i})^{3}.
\end{align}
\end{lemma}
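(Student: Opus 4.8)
The plan is to compute $\rho_k^A$ via the Soshnikov thinning formula expressing correlation functions of the thinned process $\xi^{(n)}_A$ in terms of the $\rho_j$ of the underlying determinantal process. Explicitly, by \cite[(4.5)]{Soshnikov}, $\rho_k^A(w_1,\dots,w_k)$ is an alternating series whose $j$-th term integrates $\rho_{k+j}(w_1,\dots,w_k,u_1,\dots,u_j)$ over $u_1,\dots,u_j$ in the (disjointified) shifted balls $w_i+B_n$. First I would localize: since the $w_i$ lie in $\C\setminus\partial\mathcal S$ and are pairwise distinct, for $n$ large each $w_i$ is either in the bulk $\mathcal S^\circ$ at distance $\gg\epsilon_n$ from $\partial\mathcal S$, or in the exterior. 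The exterior points contribute nothing to the limit: by Proposition~\ref{prop:pointwise estimate} together with \eqref{Q minus Qhat lower bounded}, $K_n(w_i,\cdot)$ carries a factor $e^{-\frac n2(Q(w_i)-\widehat Q(w_i))}$ which is exponentially small, killing every term in the Soshnikov series and forcing $\rho_k^A\to 0$; since $\rho(w_i)=0$ there too, \eqref{lol16} holds trivially. So I may assume all $w_i\in\mathcal S^\circ$.

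Next, in the bulk I would substitute the asymptotics of Theorem~\ref{thm:Ameur bulk-bulk correlation improved}. The key local computation is the two-point cluster structure: for $u$ near $w_i$, set $u = w_i + n^{-3/4}\tilde u$ so that $\tilde u$ ranges over $B$ (up to scaling conventions in the statement). Using $\psi(z,\overline w)-\tfrac{Q(z)+Q(w)}2 = -i\,\im L_z(w)-|w-z|^2\tfrac{\Delta Q(z)}8+\bigO(|w-z|^3)$ and $b_0(z,\overline z)=\tfrac{\Delta Q(z)}4$, one finds that the rescaled kernel $n^{-1}K_n(w_i+n^{-3/4}\tilde u,w_i+n^{-3/4}\tilde v)$ tends to $\tfrac{\Delta Q(w_i)}{4\pi}$ times a phase, while the off-diagonal blocks linking distinct $w_i,w_j$ decay (by Theorem~\ref{thm:bulk damping}, since $|w_i-w_j|$ is bounded below). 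Hence the determinant $\rho_{k+j}(w_1,\dots,w_k,\{u_\ell\})$ asymptotically factorizes over the $k$ clusters. Within the $i$-th cluster, the determinant of the $(1+j_i)\times(1+j_i)$ block — one row/column for $w_i$ and $j_i$ for the points in $w_i+B_n$ — must be evaluated; the phases $c_n$ (which are unimodular and cancel in determinants) and the $-i\,\im L_z(w)$ terms arrange, after the change of variables and a Taylor expansion, so that the leading determinant equals $\big(\tfrac{\Delta Q(w_i)}{4\pi}\big)^{1+j_i}$ times a Vandermonde-type factor in the $\tilde u$'s of absolute-value-squared type. Plugging into Soshnikov's alternating series and resumming term by term reproduces exactly, for each cluster, the factor $\pi^2\rho(w_i)^3\cdot\tfrac1\pi\int_B|u|^2 d^2u$; taking the product over $i$ gives the right-hand side of \eqref{lol16}. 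This is precisely the mechanism in \cite[proof of Lemma 3.2 / eq.~around (3.14)]{SJ2012}, specialized from the Ginibre kernel to the general-$Q$ bulk kernel of Theorem~\ref{thm:Ameur bulk-bulk correlation improved}.

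The main obstacle will be controlling the Soshnikov series rigorously: it is an infinite alternating sum over $j$, and one must show that only the first nontrivial term survives in the limit while the tail is uniformly negligible. The relevant input is that $\xi^{(n)}(w_i+B_n)$ is typically $0$ or $1$ — the probability of two or more points in a ball of radius $\bigO(n^{-3/4})$ is $\bigO(n^{-1/2})$ per cluster — so the $j$-th term is $\bigO(n^{-j/2})$ uniformly; this follows from the pointwise bound $\rho_{k+j}=\bigO(n^{k+j})$ (a consequence of Proposition~\ref{prop:pointwise estimate} and Hadamard's inequality) combined with $|B_n|=\bigO(n^{-3/2})$, which yields a geometric bound summable for large $n$. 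One also needs that the $\bigO(|w-z|^3)$ error in the kernel phase, after the $n^{-3/4}$ rescaling, contributes only $\bigO(n^{-3/4})$ corrections inside each cluster determinant — harmless because the bulk subleading term in Theorem~\ref{thm:Ameur bulk-bulk correlation improved} is merely $\bigO(1)$ (as emphasized in Remark~\ref{remark:weaker version of the main result}, this is exactly why the bulk, unlike the edge, causes no trouble). Once these uniform estimates are in place, dominated convergence finishes the proof of \eqref{lol16}.
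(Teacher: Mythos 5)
Your overall route is the same as the paper's (Soshnikov's inclusion--exclusion for the thinned process, bulk kernel asymptotics from Theorem \ref{thm:Ameur bulk-bulk correlation improved} for the $2\times 2$ cluster blocks, Theorem \ref{thm:bulk damping} for the off-diagonal decay between distinct $w_i$'s, exponential suppression outside $\mathcal{S}$), but there is a genuine gap in your control of the higher-order terms of the series. You claim the $j$-th term is $\bigO(n^{-j/2})$ ``uniformly'', deducing this from the crude pointwise bound $\rho_{k+j}=\bigO(n^{k+j})$ (Proposition \ref{prop:pointwise estimate} plus Hadamard) together with $|B_n|=\bigO(n^{-3/2})$. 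That computation actually gives $\bigO(n^{k+j})\cdot\bigO(n^{-3j/2})=\bigO(n^{\,k-j/2})$: with this bound even the main term (one point per ball, $j=k$) is only $\bigO(n^{k/2})$, and the terms with extra points are only $\bigO(n^{(k-m)/2})$, so the tail is not shown to be negligible relative to the $\bigO(1)$ main term, nor is $\rho_k^A$ even shown to be bounded. The reason is that the crude bound discards the near-diagonal repulsion: the whole point is that each cluster's $2\times2$ determinant is degenerate at order $n^2$ and equals $\frac{|u_i|^2}{\pi}\pi^{2}\rho(w_i)^{3}n^{3/2}+\bigO(n)$ as in \eqref{lol17}, and this factor of $n^{-1/2}$ per cluster must be retained when estimating the terms with additional points. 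The paper does this via the Fischer-type block inequality $\det M\le \det(M_\omega)\det(M_{\omega^c})$ (Lemma \ref{lemma1}, from \cite{SJ2012}), which yields $\rho_{2k+m}(w,x,y)\le \rho_{2k}(w,x)\prod_{i}\rho_1(y_i)$; only the $m$ \emph{extra} points are then bounded crudely by $\rho_1=\bigO(n)$, giving a factor $\frac{1}{m!}\bigO(n^{-1/2})^{m}$ times the main term. Without this (or an equivalent device preserving the two-point repulsion inside each cluster), your ``resumming term by term'' step is not justified.

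Two smaller points. First, your statement of Soshnikov's formula is imprecise: for the condition $\xi^{(n)}(w_j+B_n)=1$ the correct expansion is \eqref{eqn5}, with one distinguished point $x_j$ in each ball \emph{plus} $m$ inclusion--exclusion points $y_i$, i.e.\ it involves $\rho_{2k+m}$ with coefficients $(-1)^m/m!$; your version with $\rho_{k+j}$ and unspecified alternating coefficients would need its own combinatorial justification. Second, in the limit only the ``one point per ball'' term contributes, so no resummation of a cluster series is needed once the tail is controlled; the heuristic that extra points cost $n^{-1/2}$ each is correct, but it must be implemented through the determinant inequality rather than through the Hadamard bound you cite. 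The treatment of exterior $w_i$'s and of the factorization across clusters in your proposal is consistent with the paper's argument.
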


%
%

\begin{proof} 
Let $w_{1},\ldots,w_{k}\in \C$ be pairwise distinct. Then there exists $n_{0}\in \N$ such that the sets $w_{j}+B_{n}$ are all disjoint for all $n\geq n_{0}$.
The correlation function $\rho_{k}^{A}(w_{1},\ldots,w_{k})$ can be computed in terms of the $\rho_{j}$'s using an inclusion-exclusion argument, as mentioned in \cite[(4.5)]{SoshnikovSurvey}. 

Indeed, since the density of \eqref{general density intro} is continuous, for all $n\geq n_{0}$ we have 
\begin{align}
& \rho_{k}^{A}(w_{1},\ldots,w_{k}) = \E\bigg[ \sum_{\substack{\ell_{1},\ldots,\ell_{k}=1,\ldots,n\\ z_{\ell_{j}}\in \xi_{A}^{(n)}}} \prod_{j=1}^{k} \delta_{w_{j}}(z_{\ell_{j}}) \bigg] = \E\bigg[ \sum_{\ell_{1},\ldots,\ell_{k}=1}^{n} \prod_{j=1}^{k} \delta_{w_{j}}(z_{\ell_{j}})\mathbbm{1}_{\{\xi^{(n)}(w_{j}+B_{n})=1\}} \bigg] \nonumber \\
& = \E\bigg[ \sum_{\ell_{1},\ldots,\ell_{2k}=1}^{n} \prod_{j=1}^{k} \delta_{w_{j}}(z_{\ell_{j}})\mathbbm{1}_{\{z_{\ell_{k+j}}\in w_{j}+B_{n}\}} \prod_{\substack{j=1 \\j \neq \ell_{1},\ldots,\ell_{2k}}}^{n}\big( 1-\mathbbm{1}_{\{z_{j}\in B_{n,k}\}}\big) \bigg] \nonumber \\
& = \sum_{m=0}^{+\infty}\frac{(-1)^m}{m!}\int_{w_1+B_n}d^{2}x_1\ldots\int_{w_k+B_n}d^{2}x_k \nonumber \\
& \hspace{3cm} \int_{B_{n,k}^m}\rho_{2k+m}(w_1, x_1, \ldots, w_k, x_k, y_1, \ldots, y_m)d^{2}y_1\ldots d^{2}y_m, \label{eqn5}
\end{align}
where $B_{n,k}:=(w_1+B_n) \sqcup \ldots \sqcup(w_k+B_n)$. (In the first sums in \eqref{eqn5}, we do not need to require $\ell_{1},\ldots,\ell_{2k}$ to be distinct because $w_i \neq w_{j}$ for $i \neq j$ and $A\subset (0,+\infty)$. Note also that for $2k+m>n$ we have $\rho_{2k+m}=0$, and therefore the above series contains finitely many terms.)

We first treat the case where $w_i\in \mathcal{S}^{\circ}$ for all $i\in \{1,\ldots,k\}$. We begin with the term corresponding to $m=0$ in \eqref{eqn5}, and then show that the terms corresponding to $m\geq 1$ are negligible. It follows from \eqref{def of rhok} that
\begin{align}
\rho_{2k}(w_1, x_1, \ldots, w_k, x_k) = \det_{1 \leq i,j \leq k} \begin{pmatrix}
K_{n}(w_{i},\overline{w_{j}}) & K_{n}(w_{i},\overline{x_{j}}) \\
K_{n}(x_{i},\overline{w_{j}}) & K_{n}(x_{i},\overline{x_{j}})
\end{pmatrix}.\label{eqn6}
\end{align}
Theorem \ref{thm:bulk damping} implies a fast decay for the off-diagonal $2\times 2$ blocks: indeed, if $i\neq j$, then there exists $c>0$ such that
\begin{align}\label{lol18}
\begin{pmatrix}
K_{n}(w_{i},\overline{w_{j}}) & K_{n}(w_{i},\overline{x_{j}}) \\
K_{n}(x_{i},\overline{w_{j}}) & K_{n}(x_{i},\overline{x_{j}})
\end{pmatrix} = \begin{pmatrix}
\bigO(e^{-c\sqrt{n}}) & \bigO(e^{-c\sqrt{n}}) \\
\bigO(e^{-c\sqrt{n}}) & \bigO(e^{-c\sqrt{n}})
\end{pmatrix}, \qquad \mbox{as } n \to + \infty.
\end{align}
On the other hand, for the diagonal case $i=j$, Theorem \ref{thm:Ameur bulk-bulk correlation improved} and \eqref{v3.2 rho1} imply
\begin{align*}
\det \begin{pmatrix}
K_{n}(w_{i},\overline{w_{i}}) & K_{n}(w_{i},\overline{x_{i}}) \\
K_{n}(x_{i},\overline{w_{i}}) & K_{n}(x_{i},\overline{x_{i}})
\end{pmatrix} = n^{2} \mathrm{det}_{2}, 
\end{align*}
where $\mathrm{det}_{2}$ satisfies
\begin{align*}
\mathrm{det}_{2} = \det \begin{pmatrix}
\tilde{b}_{0}(w_{i},\overline{w_{i}})+\bigO(\frac{1}{n}) & \tilde{b}_{0}(w_{i},\overline{x_{i}})+\bigO(\frac{1}{n})  \\[0.15cm]
\tilde{b}_{0}(x_{i},\overline{w_{i}})+\bigO(\frac{1}{n}) & \tilde{b}_{0}(x_{i},\overline{x_{i}})+\bigO(\frac{1}{n}) 
\end{pmatrix}, \qquad \mbox{as } n \to + \infty,
\end{align*}
and $\tilde{b}_{0}$ is given by \eqref{def of b0t}. Let us also define
\begin{align*}
& \mathrm{det}_{2}^{(1)} := \det \begin{pmatrix}
\tilde{b}_{0}(w_{i},\overline{w_{i}}) & \tilde{b}_{0}(w_{i},\overline{x_{i}})  \\[0.15cm]
\tilde{b}_{0}(x_{i},\overline{w_{i}}) & \tilde{b}_{0}(x_{i},\overline{x_{i}}) 
\end{pmatrix}, \qquad \mathrm{det}_{2}^{(2)} := \mathrm{det}_{2} - \mathrm{det}_{2}^{(1)}.
\end{align*}
A direct computation, which uses
\begin{align*}
L_{z}(\lambda,\lambda) = \partial_{z}\partial_{w}L_{z}(\lambda,\lambda)=0, \qquad \partial_{z}L_{z}(\lambda,\lambda) = -\partial_{w}L_{z}(\lambda,\lambda),
\end{align*} 
and the parametrization $x_{i} = w_{i} + u_{i}n^{-\frac{3}{4}}$, $u_{i}\in B$, gives
\begin{align*}
\mathrm{det}_{2}^{(1)} = \frac{|u_{i}|^{2}}{\pi}  b_{0}(w_{i},\overline{w_{i}})^{2} \frac{\Delta Q(w_{i})}{4\pi}\frac{1}{\sqrt{n}} + \bigO(n^{-1}), \qquad \mbox{as } n \to + \infty.
\end{align*}
We also trivially have $\mathrm{det}_{2}^{(2)} = \bigO(n^{-1})$ as $n\to + \infty$. Since $b_{0}(w_{i},\overline{w_{i}}) = \frac{\Delta Q(w_{i})}{4} = \pi\rho(w_{i})$, this gives 
\begin{align}\label{lol17}
\det \begin{pmatrix}
K_{n}(w_{i},\overline{w_{i}}) & K_{n}(w_{i},\overline{x_{i}}) \\
K_{n}(x_{i},\overline{w_{i}}) & K_{n}(x_{i},\overline{x_{i}})
\end{pmatrix} = \frac{|u_{i}|^{2}}{\pi}  \pi^{2}\rho(w_{i})^{3}n^{\frac{3}{2}} + \bigO(n), \qquad \mbox{as } n \to + \infty
\end{align}
uniformly for $u_{i} \in B$. Combining \eqref{lol17} with \eqref{lol18}, we get
\begin{align*}
& \int_{w_1+B_n}d^{2}x_1\ldots\int_{w_k+B_n}d^{2}x_k \rho_{2k}(w_1, x_1, \ldots, w_k, x_k) = \prod_{i=1}^k\int_{w_i+B_n}\rho_2(w_i, x_i)d^{2}x_i (1+\bigO(e^{-c\sqrt{n}})) \\
& = \prod_{i=1}^k\biggl(
\int_{B}\frac{|u_{i}|^{2}}{\pi}  \pi^{2}\rho(w_{i})^{3} d^{2}u_i
\biggl)(1+\bigO(n^{-1/2})) \\
& = \bigg( \frac{1}{\pi} \int_{B}|u|^{2}d^{2}u \bigg)^{k} \prod_{i=1}^k \pi^{2}\rho(w_{i})^{3} (1+\bigO(n^{-1/2})).
\end{align*}
We now show that the terms corresponding to $m\geq1$ in \eqref{eqn5} are negligible. By Lemma \ref{lemma1} we have, for $m\geq1$,
\begin{align*}
\rho_{2k+m}(w_1, x_1, \ldots, w_{k}, x_k, y_1, \ldots, y_m)\leq\rho_{2k}(w_1, x_1, \ldots, w_k, x_k)\prod_{i=1}^m\rho_1(y_i).
\end{align*}
Thus the sum corresponding to $m\geq1$ in \eqref{eqn5} is bounded by
\begin{equation}\label{v3.6}
\bigg(\int_{w_1+B_n}d^{2}x_1\ldots\int_{w_k+B_n}d^{2}x_k\rho_{2k}(w_1, x_1, \ldots, w_k, x_k)\bigg)\sum_{m\geq1}\frac{1}{m!}\bigg(\int_{B_{n,k}}\rho_1(y)d^{2}y\bigg)^m.
\end{equation}
Proposition \ref{prop:pointwise estimate} implies that $\rho_{1}(y) = \bigO(n)$ as $n\to + \infty$ uniformly for $y\in \C$. Hence
\begin{align}\label{lol19}
\sum_{m\geq1}\frac{1}{m!}\bigg(\int_{B_{n,k}}\rho_1(y)d^{2}y\bigg)^m = \bigO(n^{-\frac{1}{2}}), \qquad \mbox{as } n \to + \infty,
\end{align}
and therefore
\begin{align*}
\rho_k^{A}(w_1, \ldots, w_k) = \bigg( \frac{1}{\pi} \int_{B}|u|^{2}d^{2}u \bigg)^{k} \prod_{i=1}^k \pi^{2}\rho(w_{i})^{3} \big( 1+\bigO(n^{-1/2}) \big), \qquad \mbox{as } n \to + \infty.
\end{align*}
This proves the claim when $w_{1},\ldots,w_{k} \in \mathcal{S}^{\circ}$.

Let $p \geq 1$. We now consider the case where $w_{1},\ldots,w_{p} \notin \mathcal{S}$ and $w_{p+1},\ldots,w_{k} \in \mathcal{S}^{\circ}$. Then, using twice Lemma \ref{lemma1} in \eqref{eqn5}, we get
\begin{align}
\rho_k^{A}(w_1, \ldots, w_k)& \leq \biggl(\int_{w_1+B_n}d^{2}x_1\ldots\int_{w_k+B_n}d^{2}x_k\rho_{2k}(w_1, x_1, \ldots, w_k, x_k)\biggl) \sum_{m\geq0}\frac{1}{m!}\biggl(\int_{B_{n,k}}\rho_1(y)d^{2}y\biggl)^m \nonumber \\
& \leq \prod_{i=1}^k\biggl(\int_{w_i+B_n}\rho_2(w_i, x)d^{2}x\biggl)\sum_{m\geq0}\frac{1}{m!}\biggl(\int_{B_{n,k}}\rho_1(y)d^{2}y\biggl)^m. \label{lol20}
\end{align}
Since $\rho_{1}(y) = \bigO(n)$ as $n\to + \infty$ uniformly for $y\in \C$, the above series is bounded (recall \eqref{lol19}). Let us now analyze the product on the right-hand side of \eqref{lol20}.
By \eqref{lol17}, for $i >p$, we have
\begin{align}\label{lol21}
\int_{w_i + B_n}\rho_2(w_i, x)d^{2}x = 
\frac{1}{\pi} \int_{B}|u|^{2}d^{2}u \times  \pi^{2}\rho(w_{i})^{3}+\bigO(n^{-1/2}), \qquad \mbox{as } n \to +\infty.
\end{align} 
On the other hand, for $i\leq p$, we have by \eqref{Q minus Qhat lower bounded} and Proposition \ref{prop:pointwise estimate} that
\begin{align}\label{estimate rho2 esterior}
|\rho_{2}(w_{i},x_{i})| = \bigO(e^{-cn}), \qquad \mbox{as } n \to + \infty
\end{align}
uniformly for $x_{i} \in w_i + B_n$ for some $c>0$, and thus
\begin{align}\label{lol22}
\int_{w_i + B_n}\rho_2(w_i, x)d^{2}x = \bigO(e^{-cn}), \qquad \mbox{as } n \to + \infty.
\end{align}
Substituting the above in \eqref{lol20} yields $\rho_k^{A}(w_1, \ldots, w_k) = \bigO(e^{-cn})$ as $n\to + \infty$. This proves the claim when at least one $w_{j}$ lies in $\C\setminus \mathcal{S}$.
\end{proof}
The next lemma, combined with Lebesgue's dominated convergence theorem, justifies the second equality in \eqref{strategy of proof}. 
\begin{lemma}\label{lemma4}(The second equality in \eqref{strategy of proof}) Let $A\subset (0,+\infty)$ and $\Omega \subset \C$ be bounded Borel sets. As $n \to + \infty$, $\rho_k^{A}(w_1, \ldots, w_k) = \bigO(1)$ uniformly for $(w_1, \ldots, w_k)\in \Omega_{0}^{k}$.
\end{lemma}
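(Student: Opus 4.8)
The plan is to bound $\rho_k^A(w_1,\ldots,w_k)$ uniformly for $(w_1,\ldots,w_k)\in\Omega_0^k$ using the inclusion-exclusion expansion \eqref{eqn5}, exactly as in the proof of Lemma \ref{lemma3}, but now tracking constants uniformly in the $w_i$ rather than fixing them. Starting from \eqref{eqn5} and applying Lemma \ref{lemma1} as in \eqref{lol20}, we get
\begin{align*}
\rho_k^A(w_1,\ldots,w_k)\leq\prod_{i=1}^k\biggl(\int_{w_i+B_n}\rho_2(w_i,x)d^2x\biggl)\sum_{m\geq0}\frac{1}{m!}\biggl(\int_{B_{n,k}}\rho_1(y)d^2y\biggl)^m,
\end{align*}
where the splitting into disjoint blocks $(w_i+B_n)$ is legitimate precisely because $(w_1,\ldots,w_k)\in\Omega_0^k$. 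By Proposition \ref{prop:pointwise estimate} we have $\rho_1(y)=\bigO(n)$ uniformly in $y$, and since $|B_{n,k}|=\bigO(n^{-3/2})$, the series factor is $1+\bigO(n^{-1/2})=\bigO(1)$ uniformly. So it remains to show each factor $\int_{w_i+B_n}\rho_2(w_i,x)d^2x$ is $\bigO(1)$ uniformly for $w_i\in\Omega$.

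For this I would split $\Omega$ into the three regions $\Omega_{\mathrm b},\Omega_{\mathrm e},\Omega_{\mathrm{ext}}$ as in the proof of Lemma \ref{lemma2} (bulk, edge, exterior, relative to the $\epsilon_n$-neighborhood of $\partial\mathcal S$). On $\Omega_{\mathrm b}$, Theorem \ref{thm:Ameur bulk-bulk correlation improved} applies uniformly and, by the computation \eqref{lol17}, $\int_{w_i+B_n}\rho_2(w_i,x)d^2x=\bigO(n^{-1/2})\cdot n^{3/2}\cdot\mathrm{Leb}\hbox{-stuff}$; more precisely the same argument used to derive \eqref{lol21} gives $\int_{w_i+B_n}\rho_2(w_i,x)d^2x=\frac{1}{\pi}\int_B|u|^2d^2u\cdot\pi^2\rho(w_i)^3+\bigO(n^{-1/2})=\bigO(1)$, uniformly since $\rho$ is bounded on $\mathcal S$. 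On $\Omega_{\mathrm{ext}}$ (and more generally whenever $\mathrm{dist}(w_i,\mathcal S)\geq\epsilon_n$), Proposition \ref{prop:pointwise estimate} combined with \eqref{Q minus Qhat lower bounded} gives $\rho_2(w_i,x)=\bigO(n^2 e^{-cn\epsilon_n^2})=\bigO(n^{-100})$ uniformly for $x\in w_i+B_n$, using also the growth condition \eqref{growth of $Q$} to handle $|w_i|\to\infty$ exactly as in \eqref{contr Omega3}; hence that factor is $\bigO(1)$.

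The main obstacle is the edge region $\Omega_{\mathrm e}$, where $\mathrm{dist}(w_i,\partial\mathcal S)<\epsilon_n$: here the bulk asymptotics fail, but this is exactly what Theorem \ref{thm:Charlier} is designed for. Writing $w_i=z_0+t\,\mathrm n$ with $z_0\in\partial\mathcal S$ the nearest boundary point and $|t|<\epsilon_n$, and $x=w_i+un^{-3/4}$ with $u\in B$, one rescales by the map in \eqref{Charlier asymp} (so $\xi,\eta$ have modulus $\bigO(\sqrt{\log n})$, within the allowed range), and uses the weaker estimate \eqref{lol44} — the $\sqrt n$ error is amply sufficient here since we only need an $\bigO(1)$ bound, not the sharper control needed elsewhere — to write
\begin{align*}
\rho_2(w_i,x)=\det\begin{pmatrix}K_n(w_i,\overline{w_i})&K_n(w_i,\overline x)\\K_n(x,\overline{w_i})&K_n(x,\overline x)\end{pmatrix}=\Big(\tfrac{\Delta Q(z_0)n}{4\pi}\Big)^2\Big(\mathrm k(\xi,\xi)\mathrm k(\eta,\eta)-|\mathrm k(\xi,\eta)|^2\Big)+\bigO(n^{3/2}(\log n)^{3/2}),
\end{align*}
where the unimodular conjugation factors $c_n$ cancel in the determinant. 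Since $\mathrm k$ is bounded (Appendix \ref{appendix:k bounded}), $\rho_2(w_i,x)=\bigO(n^2)$ uniformly for $w_i\in\Omega_{\mathrm e}$ and $x\in w_i+B_n$, and since $|B_n|=\bigO(n^{-3/2})$ this only gives $\int_{w_i+B_n}\rho_2(w_i,x)d^2x=\bigO(n^{1/2})$, which is not $\bigO(1)$. To recover $\bigO(1)$ one must exploit the cancellation in the $2\times2$ determinant as $x\to w_i$: expanding $\mathrm k(\xi,\xi)\mathrm k(\eta,\eta)-|\mathrm k(\xi,\eta)|^2$ to second order in $\eta-\xi=u\cdot(\text{scaling})=\bigO(n^{-1/4})$ shows this combination is $\bigO(|\eta-\xi|^2)=\bigO(n^{-1/2})$ (the Gram-determinant structure forces the vanishing of the zeroth and first order terms, just as $\mathrm{det}_2^{(1)}$ in \eqref{lol17} was $\bigO(n^{-1/2})$ rather than $\bigO(1)$), so in fact $\rho_2(w_i,x)=\bigO(n^{3/2})$ uniformly on the edge, whence $\int_{w_i+B_n}\rho_2(w_i,x)d^2x=\bigO(1)$. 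Plugging these three bounds back into the product completes the proof. One mild point of care: one should check the conjugation factors $c_n$ do not interfere — they are unimodular and appear in conjugate pairs $c_n(\xi)\overline{c_n(\eta)}$ along each matrix entry, so they drop out of the determinant, as already noted.
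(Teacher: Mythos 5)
Your reduction to showing $\int_{w+B_n}\rho_2(w,x)\,d^2x=\bigO(1)$ uniformly in $w$, and your treatment of the bulk and exterior regions, coincide with the paper's argument. The gap is in the edge region, and it is precisely the pitfall the paper flags in Remark \ref{remark:weaker version of the main result}: you invoke the weaker estimate \eqref{lol44}, asserting that ``the $\sqrt n$ error is amply sufficient here,'' but it is not. With \eqref{lol44} each kernel entry carries an \emph{unstructured} error of size $\bigO\big((1+|\xi|^3+|\eta|^3)\sqrt n\big)=\bigO\big((\log n)^{3/2}\sqrt n\big)$ in the range $|\xi|,|\eta|\lesssim\sqrt{\log n}$, and in the $2\times2$ determinant this error multiplies the $\bigO(n)$ leading entries with no cancellation available (the cancellation at $\eta=\xi$ can only be exploited for terms with explicit smooth structure). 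This is visible in your own display: you write $\rho_2=(\tfrac{\Delta Q(z_0)n}{4\pi})^2(\mathrm k(\xi,\xi)\mathrm k(\eta,\eta)-|\mathrm k(\xi,\eta)|^2)+\bigO(n^{3/2}(\log n)^{3/2})$, and the error term already dominates the $\bigO(n^{3/2})$ you later claim for $\rho_2$. The best your argument yields is $\rho_2=\bigO(n^{3/2}(\log n)^{3/2})$ on the edge, hence $\int_{w+B_n}\rho_2\,d^2x=\bigO((\log n)^{3/2})$, which is not the $\bigO(1)$ asserted in the lemma (and is exactly why the author proves the sharper Theorem \ref{thm:Charlier} rather than settling for \eqref{lol44}).

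The paper's proof repairs this by using the full expansion \eqref{Charlier asymp}: writing the rescaled determinant as $\mathrm{det}_{2,\mathrm e}=\mathrm{det}_{2,\mathrm e}^{(1)}+(\mathrm{det}_{2,\mathrm e}^{(2)}-\mathrm{det}_{2,\mathrm e}^{(1)})+\mathrm{det}_{2,\mathrm e}^{(3)}$, the explicit subleading kernel $\mathrm k_2$ allows the $n^{-1/2}$-level cross terms to be Taylor-expanded in $\eta-\xi=\bigO(n^{-1/4})$ as well; since they too vanish at $\eta=\xi$, the derivative bounds \eqref{lol46}--\eqref{lol47} give $\mathrm{det}_{2,\mathrm e}^{(2)}-\mathrm{det}_{2,\mathrm e}^{(1)}=\bigO((1+|\xi|+|\eta|)^5/n)$, while the remaining unstructured error is only $\bigO((\log n)^3/n)$, so $\mathrm{det}_{2,\mathrm e}=\bigO(n^{-1/2})$ and the edge factor is $\bigO(1)$. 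A secondary inaccuracy in your sketch: even for the leading Gram part, a second-order Taylor bound using \eqref{lol46} gives $\bigO(|\eta-\xi|^2(1+|\xi|+|\eta|)^2)=\bigO(n^{-1/2}\log n)$, not $\bigO(n^{-1/2})$; the paper removes this extra logarithm by computing the second-order coefficient $F(\xi)$ explicitly in \eqref{lol45} and showing it is bounded via the $\erfc$ asymptotics. So your skeleton is the right one, but to close the edge case you must use Theorem \ref{thm:Charlier} in its full strength (structured $\mathrm k_2\sqrt n$ term plus $\bigO(1+|\xi|^6+|\eta|^6)$ error), not the simplified bound \eqref{lol44}.
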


%
%

\begin{proof} 
Note that the inequalities in \eqref{lol20} hold for any $(w_1, \ldots, w_k)\in \Omega_{0}^{k}$, and we already proved that the series in \eqref{lol20} is bounded. Hence, to prove the claim, it suffices to prove that $\int_{w+B_n}\rho_2(w, x)d^{2}x$ is bounded for any $w \in \C$. Let $\epsilon_{n}:=M \sqrt{\log n}/\sqrt{n}$, where $M$ is a fixed constant satisfying $M \geq 100/\sqrt{\min_{\mathcal{S}}\Delta Q}>0$ so that the conclusion of Theorem \ref{thm:Ameur bulk-bulk correlation improved} holds. As in the proof of Lemma \ref{lemma2}, we will consider three cases: 
\begin{enumerate}
\item $w\in \mathcal{S}$ and $\mathrm{dist}(w,\partial \mathcal{S}) \geq \epsilon_{n}$,
\item \vspace{-0.1cm} $\mathrm{dist}(w,\partial \mathcal{S}) \leq \epsilon_{n}$,
\item \vspace{-0.1cm} $w\notin \mathcal{S}$ and $\mathrm{dist}(w,\partial \mathcal{S}) \geq \epsilon_{n}$.
\end{enumerate}
The first and third cases can be analyzed as in Lemma \ref{lemma3}; the only difference with Lemma \ref{lemma3} is that here $w$ is allowed to approach $\partial \mathcal{S}$ as $n\to + \infty$, while in Lemma \ref{lemma3} $w$ was fixed. However, since Theorem \ref{thm:Ameur bulk-bulk correlation improved} still applies, the proof remains essentially unchanged. For the first case, we find that \eqref{lol21} still holds. In particular, $\int_{w + B_n}\rho_2(w, x)d^{2}x=\bigO(1)$ as $n\to + \infty$ uniformly for $w\in \mathcal{S}$ such as $\mathrm{dist}(w,\partial \mathcal{S}) \geq \epsilon_{n}$. For the third case, in a similar way as \eqref{lol22}, using \eqref{Q minus Qhat lower bounded} and Proposition \ref{prop:pointwise estimate} we find (increasing $M$ if necessary)
\begin{align*}
\int_{w + B_n}\rho_2(w, x)d^{2}x \leq C e^{-cn \epsilon_{n}^{2}} = \bigO(n^{-100}), \qquad \mbox{as } n \to + \infty
\end{align*}
for some constants $C,c>0$.
It remains to analyze the second case, i.e. when $\mathrm{dist}(w,\partial \mathcal{S}) \leq \epsilon_{n}$. For this, we use the parametrizations
\begin{align*}
w = z_{0} + \mathrm{n} \frac{2\xi}{\sqrt{\Delta Q(z_{0})n}}, \qquad x = z_{0} + \mathrm{n} \frac{2\eta}{\sqrt{\Delta Q(z_{0})n}}, \qquad |\xi|,|\eta| \leq 2M \sqrt{\log n},
\end{align*}
where $z_{0}\in \partial \mathcal{S}$ and $\mathrm{n}$ is the outward pointing unit normal to $\partial \mathcal{S}$ at $z_{0}$. The condition $x\in w+B_{n}$ implies that $\xi,\eta$ satisfy $|\eta-\xi|=\bigO(n^{-1/4})$ as $n\to + \infty$.
Using Theorem \ref{thm:Charlier}, we obtain (using that the unimodular functions $c_{n}$ cancel out when considering $\rho_{2}$)
\begin{align}\label{lol51}
\rho_{2}(\lambda,x) = \rho(z_{0})^{2}n^{2} \mathrm{det}_{2,\mathrm{e}},
\end{align}
where $\mathrm{det}_{2,\mathrm{e}}$ satisfies
\begin{align*}
& \mathrm{det}_{2,\mathrm{e}} = \det \begin{pmatrix}
\mathrm{k}(\xi,\overline{\xi})+\frac{\mathrm{k}_{2}(\xi,\overline{\xi};z_{0})}{\rho(z_{0})^{2}\sqrt{n}} + \bigO(\frac{(\log n)^{3}}{n}) & \mathrm{k}(\xi,\overline{\eta})+\frac{\mathrm{k}_{2}(\xi,\overline{\eta};z_{0})}{\rho(z_{0})^{2}\sqrt{n}} + \bigO(\frac{(\log n)^{3}}{n})  \\[0.15cm]
\mathrm{k}(\eta,\overline{\xi})+\frac{\mathrm{k}_{2}(\eta,\overline{\xi};z_{0})}{\rho(z_{0})^{2}\sqrt{n}} + \bigO(\frac{(\log n)^{3}}{n}) & \mathrm{k}(\eta,\overline{\eta})+\frac{\mathrm{k}_{2}(\eta,\overline{\eta};z_{0})}{\rho(z_{0})^{2}\sqrt{n}} + \bigO(\frac{(\log n)^{3}}{n}) 
\end{pmatrix}, \quad \mbox{as } n \to + \infty
\end{align*}
uniformly for $|\xi|,|\eta| \leq 2M \sqrt{\log n}$ and $z_{0}\in \partial \mathcal{S}$. Let us also define
\begin{align*}
& \mathrm{det}_{2,\mathrm{e}}^{(1)} := \det \begin{pmatrix}
\mathrm{k}(\xi,\overline{\xi}) & \mathrm{k}(\xi,\overline{\eta})  \\[0.15cm]
\mathrm{k}(\eta,\overline{\xi}) & \mathrm{k}(\eta,\overline{\eta}) 
\end{pmatrix}, \qquad \mathrm{det}_{2,\mathrm{e}}^{(2)}:= \det \begin{pmatrix}
\mathrm{k}(\xi,\overline{\xi})+\frac{\mathrm{k}_{2}(\xi,\overline{\xi};z_{0})}{\rho(z_{0})^{2}\sqrt{n}} & \mathrm{k}(\xi,\overline{\eta})+\frac{\mathrm{k}_{2}(\xi,\overline{\eta};z_{0})}{\rho(z_{0})^{2}\sqrt{n}}   \\[0.15cm]
\mathrm{k}(\eta,\overline{\xi})+\frac{\mathrm{k}_{2}(\eta,\overline{\xi};z_{0})}{\rho(z_{0})^{2}\sqrt{n}} & \mathrm{k}(\eta,\overline{\eta})+\frac{\mathrm{k}_{2}(\eta,\overline{\eta};z_{0})}{\rho(z_{0})^{2}\sqrt{n}} \end{pmatrix}, 
\end{align*}
and $\mathrm{det}_{2,\mathrm{e}}^{(3)}:= \mathrm{det}_{2,\mathrm{e}} - \mathrm{det}_{2,\mathrm{e}}^{(2)}$. Using the definition of $\mathrm{k}$ and the fact that $\mathrm{k}(\xi,\eta)$ is bounded for $\xi,\eta \in \C$ (this is proved in Appendix \ref{appendix:k bounded}), we infer that for each $\alpha_{1},\alpha_{2},\alpha_{3},\alpha_{4}\in \N$, there exists $C_{\alpha_{1},\alpha_{2},\alpha_{3},\alpha_{4}}>0$ such that
\begin{align}\label{lol46}
|\partial_{\re \xi}^{\alpha_{1}}\partial_{\im \xi}^{\alpha_{2}}\partial_{\re \eta}^{\alpha_{3}}\partial_{\im \eta}^{\alpha_{4}}\mathrm{k}(\xi,\eta)| \leq C_{\alpha_{1},\alpha_{2},\alpha_{3},\alpha_{4}} (1+|\xi|+|\eta|)^{\alpha_{1}+\alpha_{2}+\alpha_{3}+\alpha_{4}}, \qquad \mbox{for all } \xi,\eta \in \C.
\end{align} 
Writing $\eta = \xi + \frac{u}{n^{1/4}}$, we obtain
\begin{align}\label{lol45}
& \mathrm{det}_{2,\mathrm{e}}^{(1)} = \frac{u^{2}}{2\pi n^{1/2}}F(\xi) + \bigO\big((1+|\xi|^{3})n^{-3/4}\big), \qquad \mbox{as } n \to + \infty,
\end{align}
uniformly for $u$ in compact subsets of $\C$, for $|\xi|\leq 2M\sqrt{\log n}$ and for $z_{0}\in \partial \mathcal{S}$, with
\begin{align*}
F(\xi) := -2e^{-8(\re \xi)^{2}-8 i \, \re \xi \im \xi} + 4 \sqrt{\pi} \xi e^{-4|\xi|^{2}} \mathrm{erfc}(2\xi) + \pi e^{-8(\im \xi)^{2}+8 i \, \re \xi \im \xi} \mathrm{erfc}(2\xi)^{2}.
\end{align*}
For the error term in \eqref{lol45}, we have used Taylor's theorem and \eqref{lol46} with $\alpha_{1}+\alpha_{2}+\alpha_{3}+\alpha_{4}=3$. By \cite[7.12.1]{NIST},
\begin{align*}
& \erfc (2\xi) = e^{-4(\re \xi)^{2}}e^{4(\im \xi)^{2}}\frac{e^{-8i \, \re \xi \im \xi}}{2\sqrt{\pi}\xi}(1+o(1)), & & \mbox{as } \xi \to \infty, \; |\arg \xi| \leq \frac{2\pi}{3},  \\
& \erfc (2\xi) = 2 + e^{-4(\re \xi)^{2}}e^{4(\im \xi)^{2}}\frac{e^{-8i \, \re \xi \im \xi}}{2\sqrt{\pi}\xi}(1+o(1)), & & \mbox{as } \xi \to \infty, \; \frac{\pi}{3} \leq |\arg \xi| \leq \pi, 
\end{align*}
which implies that $F(\xi)$ is bounded for $\xi \in \C$. Thus 
\begin{align}\label{lol48}
\mathrm{det}_{2,\mathrm{e}}^{(1)}=\bigO(n^{-1/2}), \qquad \mbox{as } n \to + \infty
\end{align}
uniformly for $z_{0}\in \partial \mathcal{S}$ and for $|\xi|,|\eta| \leq 2M \sqrt{\log n}$ such that $|\eta-\xi|=\bigO(n^{-1/4})$. In a similar way as \eqref{lol46}, using the definition of $\mathrm{k}_{2}$ we infer that for each $\alpha_{1},\ldots,\alpha_{4}\in \N$, there exists $\tilde{C}_{\alpha_{1},\alpha_{2},\alpha_{3},\alpha_{4}}>0$ such that
\begin{align}\label{lol47}
|\partial_{\re \xi}^{\alpha_{1}}\partial_{\im \xi}^{\alpha_{2}}\partial_{\re \eta}^{\alpha_{3}}\partial_{\im \eta}^{\alpha_{4}}\mathrm{k}_{2}(\xi,\eta)| \leq \tilde{C}_{\alpha_{1},\alpha_{2},\alpha_{3},\alpha_{4}} (1+|\xi|+|\eta|)^{3+\alpha_{1}+\alpha_{2}+\alpha_{3}+\alpha_{4}}, \qquad \mbox{for all } \xi,\eta \in \C.
\end{align}
Using Taylor's theorem and \eqref{lol46} and \eqref{lol47} with $\alpha_{1}+\alpha_{2}+\alpha_{3}+\alpha_{4} \leq 2$, we get
\begin{align}\label{lol49}
\mathrm{det}_{2,\mathrm{e}}^{(2)}-\mathrm{det}_{2,\mathrm{e}}^{(1)} = \bigO\bigg( \frac{(1+|\xi|+|\eta|)^{5}}{n} \bigg), \qquad \mbox{as } n \to + \infty
\end{align}
uniformly for $z_{0}\in \partial \mathcal{S}$ and for $|\xi|,|\eta| \leq 2M \sqrt{\log n}$ such that $|\eta-\xi|=\bigO(n^{-1/4})$. Finally, we also trivially have
\begin{align}\label{lol50}
\mathrm{det}_{2,\mathrm{e}}^{(3)} = \bigO\bigg( \frac{(\log n)^{3}}{n} \bigg), \qquad \mbox{as } n \to + \infty
\end{align}
uniformly for $z_{0}\in \partial \mathcal{S}$ and for $|\xi|,|\eta| \leq 2M \sqrt{\log n}$ such that $|\eta-\xi|=\bigO(n^{-1/4})$. Combining \eqref{lol48}, \eqref{lol49} and \eqref{lol50} yields 
\begin{align*}
\mathrm{det}_{2,\mathrm{e}} =\bigO(n^{-1/2}), \qquad \mbox{as } n \to + \infty
\end{align*}
uniformly for $z_{0}\in \partial \mathcal{S}$ and for $|\xi|,|\eta| \leq 2M \sqrt{\log n}$ such that $|\eta-\xi|=\bigO(n^{-1/4})$. By \eqref{lol51}, it follows that
\begin{align*}
\int_{w + B_n}\rho_2(w, x)d^{2}x = \bigO(1), \qquad \mbox{as } n \to + \infty
\end{align*}
uniformly for $\mathrm{dist}(w,\partial \mathcal{S}) \leq \epsilon_{n}$, which finishes the proof.
\end{proof}
Let us define $\Omega_{1}^{k} = \Omega^k\setminus\Omega_{0}^{k}$. The next lemma shows that the integral of $\rho_{k}^{A}$ over $\Omega_{1}^{k}$ vanishes as $n\to +\infty$, thereby establishing the first equality in \eqref{strategy of proof}.
\begin{lemma}\label{lemma5}
(The first equality in \eqref{strategy of proof}) Let $A\subset (0,+\infty)$ and $\Omega \subset \C$ be bounded Borel sets.
\begin{align*}
\lim_{n\to + \infty}\int_{\Omega_{1}^{k}}\rho_k^{A}(w_1, \ldots, w_k)d^{2}w_1\ldots d^{2}w_k = 0.
\end{align*}
\end{lemma}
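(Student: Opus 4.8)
The idea is to isolate a single ``colliding'' pair of coordinates and then to argue that, measure‑theoretically, collisions are too rare to survive integration against $\rho_k^{A}$, even though $\rho_k^{A}$ need not be bounded on $\Omega_1^k$ (so the clean bound \eqref{lol20} of Lemmas \ref{lemma3}--\ref{lemma4} is unavailable). Fix $\tau>0$ with $A\subset(0,\tau)$ and write $E_{ab}=\{(w_1,\ldots,w_k)\in\Omega^k:(w_a+B_n)\cap(w_b+B_n)\neq\emptyset\}$ for $1\le a<b\le k$, so that $\Omega_1^k=\bigcup_{a<b}E_{ab}$; by the symmetry of $\rho_k^{A}$ in its arguments it then suffices to prove $\int_{E_{12}}\rho_k^{A}\to0$ as $n\to+\infty$. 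Since $B_n-B_n\subset D(0,2\tau n^{-3/4})$, on $E_{12}$ the coordinate $w_2$ lies in $w_1+D(0,2\tau n^{-3/4})$; in particular the Lebesgue measure of $E_{12}$ is $\bigO(n^{-3/2})$.

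The key step is to replace $\mathbbm{1}_{\{\xi^{(n)}(z_i+B_n)=1\}}$ everywhere by the larger quantity $\xi^{(n)}(z_i+B_n)=\sum_{j}\mathbbm{1}_{\{z_j\in z_i+B_n\}}$ (recall $z_i\notin z_i+B_n$), to expand the resulting product of sums, and to group the outcome according to which of the $2k$ indices involved coincide — exactly the bookkeeping that produces \eqref{eqn5}, but now requiring no disjointness. This yields a bound
\begin{equation*}
\rho_k^{A}(w_1,\ldots,w_k)\ \le\ \int_{(w_1+B_n)\times\cdots\times(w_k+B_n)}\rho_{2k}(w_1,x_1,\ldots,w_k,x_k)\,d^2x_1\cdots d^2x_k\ +\ \sum_{T}(\mathrm{term})_T,
\end{equation*}
a finite sum (the number of terms depending only on $k$) in which each $(\mathrm{term})_T$ is an integral of some $\rho_m$, $k\le m<2k$, over a product of the balls $w_i+B_n$ and of nonempty intersections $(w_i+B_n)\cap(w_j+B_n)$, against indicator constraints of the shape $\mathbbm{1}_{\{w_j-w_i\in B_n\}}$. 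For the first (``diagonal'') term, Lemma \ref{lemma1} applied to the partition of the $2k$ points into the pairs $\{w_i,x_i\}$ gives $\rho_{2k}(w_1,x_1,\ldots,w_k,x_k)\le\prod_{i=1}^k\rho_2(w_i,x_i)$, so by the uniform bound $\int_{w+B_n}\rho_2(w,x)\,d^2x=\bigO(1)$ for $w\in\C$ — which is exactly what the proof of Lemma \ref{lemma4} establishes — the diagonal term is $\bigO(1)$ uniformly in $(w_1,\ldots,w_k)$, and its integral over $E_{12}$ is $\bigO(1)\cdot\bigO(n^{-3/2})=\bigO(n^{-3/2})\to0$.

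For a collision term $(\mathrm{term})_T$ carrying $c\ge1$ coincidences (so its integrand involves $\rho_{2k-c}$), Lemma \ref{lemma1} again splits $\rho_{2k-c}$ into $2\times2$ blocks $\rho_2(w_i,x_i)$ — one for each source keeping its own fresh witness — together with $c$ singleton factors $\rho_1(w_i)=K_n(w_i,\overline{w_i})$, one for each source whose witness has been absorbed; the $\rho_2$‑integrals are again $\bigO(1)$ by Lemma \ref{lemma4}, while $\rho_1(w_i)=\bigO(n)$ uniformly by Proposition \ref{prop:pointwise estimate} (using $Q\ge\widehat Q$). Each coincidence thus produces a factor $\bigO(n)$, but it simultaneously forces one coordinate $w_j$ — or, after a Fubini exchange, one witness variable — into a translate of $B_n$, which is worth a factor $|B_n|=\bigO(n^{-3/2})$ once integrated; together with the $\bigO(n^{-3/2})$ already supplied by the constraint $w_2\in w_1+D(0,2\tau n^{-3/4})$ defining $E_{12}$, these volume gains outweigh the $\bigO(n)$ factors, so that $\int_{E_{12}}(\mathrm{term})_T=\bigO(n^{-1/2})$. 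The only way this balance could fail would be a configuration in which two of the constraints over‑determine a single coordinate difference — but that would require both $u$ and $-u$, or a cyclic sum of elements all $\succ0$, to lie in $B_n$, which is impossible since $u\in B$ forces $u\succ0$. Summing the finitely many terms gives $\int_{E_{12}}\rho_k^{A}=\bigO(n^{-1/2})\to0$, whence $\int_{\Omega_1^k}\rho_k^{A}\le\binom{k}{2}\int_{E_{12}}\rho_k^{A}\to0$.

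I expect the main obstacle to be exactly the combinatorial bookkeeping of the second paragraph: enumerating the coincidence types, writing down the correct reduced integral and constraint set for each, and verifying that in every non‑vacuous type the $c$ volume gains genuinely multiply against the $c$ factors $\bigO(n)$ — that is, that the over‑determined (and hence vacuous) configurations singled out above are the only ones that could spoil the estimate. All the analytic input — Lemma \ref{lemma1}, the uniform bound on $\int_{w+B_n}\rho_2(w,\cdot)$ extracted from the proof of Lemma \ref{lemma4}, and the pointwise estimate of Proposition \ref{prop:pointwise estimate} — is already in place, so no new estimate on the correlation kernel is required.
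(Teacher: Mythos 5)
Your plan is sound and all the analytic inputs you invoke are the right ones — Lemma \ref{lemma1}, the uniform bound $\int_{w+B_n}\rho_2(w,x)\,d^2x=\bigO(1)$ extracted from the proof of Lemma \ref{lemma4}, and $\rho_1=\bigO(n)$ from Proposition \ref{prop:pointwise estimate} — and the balance you exploit (each coordinate that loses its own witness costs $\bigO(n)$ but is confined to a set of measure $\bigO(n^{-3/2})$) is exactly the mechanism in the paper; but the combinatorial organization is genuinely different. You bound every indicator $\mathbbm{1}_{\{\xi^{(n)}(w_j+B_n)=1\}}$ by the count $\xi^{(n)}(w_j+B_n)$, expand the product, classify coincidence patterns, and localize to a single colliding pair $E_{12}$; this obliges you to enumerate the patterns and to verify that over-determined configurations are vacuous or come with independent volume gains — the bookkeeping you yourself flag as the remaining work. (It does go through: since each source has exactly one witness identification, the constraint graph has out-degree at most one, so any undirected cycle would be a directed cycle of $\succ$-relations coming from $B\subset\{u\succ 0\}$, hence impossible; in particular shared-witness constraints only form stars pointing at unabsorbed owners, so in every non-vacuous term the $c$ factors $\bigO(n)$ meet at least $c$ independent factors $\bigO(n^{-3/2})$, plus the $E_{12}$ factor unless it duplicates one of them, giving at worst $\bigO(n^{-1/2})$ — this forest argument is the one step you must write out.) The paper avoids the expansion entirely: it groups $w_1,\ldots,w_k$ into equivalence classes by transitive closure of closeness at scale $n^{-3/4}$, keeps only the indicator attached to the $\succ$-maximal representative of each class (discarding the others by $\mathbbm{1}\le 1$), and only then bounds those $p<k$ indicators by counts, which yields in one stroke $\rho_k^A\le\prod_{j\notin\{i_1,\ldots,i_p\}}\rho_1(w_j)\prod_{j=1}^p\int_{w_{i_j}+B_n}\rho_2(w_{i_j},x)\,d^2x=\bigO(n^{k-p})$, while the stratum of $\Omega_1^k$ with exactly $p$ classes has measure $\bigO(n^{-\frac32(k-p)})$; the cluster deficit $k-p$ plays the role of your coincidence count $c$, and the $\succ$-maximal choice silently does the work your vacuousness/independence discussion must do by hand. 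What your route buys is a decomposition ($E_{ab}$, overlapping windows) that matches the definition of $\Omega_1^k$ verbatim and an explicit per-pair rate $\bigO(n^{-1/2})$; what the paper's clustering trick buys is that no case enumeration is needed at all.
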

\begin{proof}
The proof is a minor adaptation of \cite[Lemma 3.6]{SJ2012}. 

Let $(w_{1},\ldots,w_{k})\in \Omega_{1}^{k}$. Since the subset of $\Omega_{1}^{k}$ where at least two $w_{j}$'s are equal has measure zero, it is enough to consider the case where $w_{i}\neq w_{j}$ for $i\neq j$.

We introduce an equivalence relation $\sim$ on the set $\{w_1, \ldots, w_k\}$ as follows. We first write $w_i\sim w_j$ if either $w_i-w_j\in B_n$ or $w_j-w_i\in B_n$. We then extend this relation transitively: if there exist $w_{i_1}, \ldots, w_{i_t}$ such that $w_i\sim w_{i_1},w_{i_{1}}\sim w_{i_2},\ldots, w_{i_{t-1}}\sim w_{i_{t}}$ and $w_{i_{t}} \sim w_{j}$, then we also write $w_i\sim w_j$. 

The definition of $\Omega_{1}^{k}$ implies that there are $p<k$ equivalence classes. For each $j\in \{1,\ldots,p\}$, let $w_{i_{j}}$ denote the maximal element in the $j$-th class with respect to $\succ$. 

Since $w_{i}\neq w_{j}$ for $i\neq j$ and the density in \eqref{general density intro} is continuous, we have
\begin{align*}
& \rho_{k}^{A}(w_{1},\ldots,w_{k}) = \E\bigg[ \sum_{\ell_{1},\ldots,\ell_{k}} \prod_{j=1}^{k} \delta_{w_{j}}(z_{\ell_{j}})\mathbbm{1}_{\{\xi^{(n)}(w_{j}+B_{n})=1\}} \bigg] \\
& \leq \E\bigg[ \sum_{\ell_{1},\ldots,\ell_{k}} \prod_{j=1}^{k} \delta_{w_{j}}(z_{\ell_{j}}) \prod_{j=1}^{p}\mathbbm{1}_{\{\xi^{(n)}(w_{i_{j}}+B_{n})=1\}} \bigg] \leq \E\bigg[ \sum_{\ell_{1},\ldots,\ell_{k}} \prod_{j=1}^{k} \delta_{w_{j}}(z_{\ell_{j}}) \prod_{j=1}^{p}\xi^{(n)}(w_{i_{j}}+B_{n}) \bigg] \\
& = \int_{w_{i_1}+B_n}d^{2}x_1\ldots\int_{w_{i_p}+B_n}d^{2}x_p\rho_{k+p}(w_1, \ldots, w_k, x_1, \ldots, x_p).
\end{align*}
Using Lemma \ref{lemma1}, we then get
\begin{align*}
\rho_{k}^{A}(w_{1},\ldots,w_{k}) \leq \prod_{\substack{j=1,\ldots,k \\ j\notin \{ i_1, \ldots, i_p\}}}\rho_1(w_j) \; \prod_{j=1}^p\int_{w_{i_j}+B_n}\rho_2(w_{i_j}, x_j)d^{2}x_j.
\end{align*}
Proposition \ref{prop:pointwise estimate} implies that $\rho_{1}(w)=\bigO(n)$ as $n\to + \infty$ uniformly for $w\in \C$, and from the proof of Lemma \ref{lemma4}, we also have that $\int_{w+B_n}\rho_2(w, x)d^{2}x=\bigO(1)$ as $n\to + \infty$ uniformly for $w\in \C$. Hence
\begin{align}\label{lol26}
\rho_{k}^{A}(w_{1},\ldots,w_{k}) = \bigO(n^{k-p}), \qquad \mbox{as } n \to + \infty
\end{align}
uniformly for $(w_{1},\ldots,w_{k})\in \Omega_{1,p}^{k}$, where 
\begin{align*}
\Omega_{1,p}^{k} = \{(w_{1},\ldots,w_{k})\in \Omega_{1}^{k}: w_{i}\neq w_{j} \mbox{ if } i \neq j \mbox{ and there are $p$ equivalence classes} \}.
\end{align*}
Since $\Omega$ is bounded, for each $p\in \{1,\ldots,k-1\}$ the region $\Omega_{1,p}^{k}$ has Lebesgue measure $\mathcal{O}(n^{-\frac{3}{2}(k-p)})$. Combined with the estimate \eqref{lol26}, this gives
\begin{align*}
\int_{\Omega_{1,p}^{k}}\rho_k^{A}(w_1, \ldots, w_k)d^{2}w_1\ldots d^{2}w_k = \bigO(n^{-\frac{k-p}{2}}), \qquad \mbox{as } n \to + \infty.
\end{align*}
Since $\Omega_{1}^{k}\setminus \cup_{p=1}^{k-1}\Omega_{1,p}^{k}$ has measure zero, the proof is complete.
\end{proof}

%
%

\begin{proof}[Proof of Theorem \ref{thm1}]
Lemmas \ref{lemma3}, \ref{lemma4} and \ref{lemma5} together imply that \eqref{v3.5} (and hence also \eqref{eqn3}) holds for any bounded Borel sets $A\subset (0,+\infty)$ and $\Omega \subset \C$. That is, all factorial moments of $\xi^{(n)}_{A}(\Omega)$ converge as $n\to + \infty$ to those of a $\mathrm{Poisson}(\lambda_{A\times \Omega})$ random variable. It then follows from \cite[Proposition 2.8]{LM2024} that $\xi^{(n)}_{A}(\Omega) \oset{\mathrm{law}}{\underset{n\to \infty}{\xrightarrow{\hspace*{0.85cm}}}} \mathrm{Poisson}(\lambda_{A\times \Omega})$. By Lemma \ref{lemma2}, this implies
\begin{align}\label{lol27}
\chi^{(n)}(A\times \Omega) \oset{\mathrm{law}}{\underset{n\to \infty}{\xrightarrow{\hspace*{0.85cm}}}} \mathrm{Poisson}(\lambda_{A\times \Omega}).
\end{align}
Since \eqref{lol27} holds for any bounded Borel sets $A\subset (0,+\infty)$ and $\Omega \subset \C$, by Proposition \ref{prop1}, the sequence of point processes $\chi^{(n)}$ converges in distribution to a Poisson point process $\chi$ with intensity \eqref{intensity main thm}, as desired.
\end{proof}

\begin{proof}[Proof of Corollary \ref{col1}.]
Let $\widetilde{t}^{(n)}_{\ell}$ be the $\ell$-th smallest element of the set $\{n^{3/4}|z_i-z_{i^*}|: 1\leq i \leq n-1\}$, and let $t^{(n)}_\ell$ be the $\ell$-th smallest element of $\{n^{3/4}|z_i-z_j|: 1\leq i< j \leq n\}$. We first compute the limiting joint distribution of $(\widetilde{t}^{(n)}_{1},\ldots,\widetilde{t}^{(n)}_{k})$ as $n\to + \infty$ with $k$ fixed. We will then show, for any fixed $k$, that $(\widetilde{t}^{(n)}_{1},\ldots,\widetilde{t}^{(n)}_{k})$ and $(t^{(n)}_{1},\ldots,t^{(n)}_{k})$ have the same limiting distribution. 

The event $\{x_\ell< \widetilde{t}_\ell^{(n)}<y_\ell \mbox{ for all } 1\leq\ell\leq k\}$ is equivalent to
\begin{align*}
& \chi^{(n)}((x_k, y_k), \mathbb{C}) \geq 1,\\ 
& \chi^{(n)}((x_\ell, y_\ell), \mathbb{C}) = 1, \hspace{1.05cm} \mbox{for all } 1\leq\ell\leq k-1,\\ 
& \chi^{(n)}((y_{\ell-1}, x_\ell), \mathbb{C}) = 0, \qquad \mbox{for all }  1\leq\ell\leq k,
\end{align*}
where we set $y_0=0$. Since $\chi$ is a Poisson point process, $\chi(\mathcal{N}_{1})$ and $\chi(\mathcal{N}_{2})$ are independent whenever $\mathcal{N}_{1}$ and $\mathcal{N}_{2}$ are disjoint, and therefore
\begin{align*}
& \lim_{n\to + \infty} \mathbb{P}(x_\ell<\widetilde{t}_\ell^{(n)}<y_\ell \mbox{ for all } 1\leq\ell\leq k) \\
& = \mathbb{P}\big(\chi((x_k, y_k), \mathbb{C}) \geq 1\big) \prod_{\ell=1}^{k-1}\mathbb{P}\big(\chi((x_\ell, y_\ell), \mathbb{C}) = 1\big) \prod_{\ell=1}^{k}\mathbb{P}\big(\chi((y_{\ell-1}, x_\ell), \mathbb{C})=0\big) \\ 
& = \Big(1-e^{-\frac{\mathcal{J}}{4}(y_k^4-x_k^4)}\Big)\prod_{\ell=1}^{k-1}\frac{\mathcal{J}}{4}(y_\ell^4-x_\ell^4)e^{-\frac{\mathcal{J}}{4}(y_\ell^4-x_\ell^4)}\prod_{\ell=1}^ke^{-\frac{\mathcal{J}}{4}(x_\ell^4-y_{\ell-1}^4)} \\
& =\Big(e^{-\frac{\mathcal{J}}{4}x_k^4}-e^{-\frac{\mathcal{J}}{4}y_k^4}\Big)\prod_{\ell=1}^{k-1}\frac{\mathcal{J}}{4}(y_\ell^4-x_\ell^4),
\end{align*}
where we recall that $\mathcal{J}=\pi^{2}\int_{\mathcal{S}} \rho(z)^{3}d^{2}z$. Therefore, $(\widetilde{t}_1^{(n)}, \ldots, \widetilde{t}_k^{(n)})$ converges in distribution as $n\to + \infty$ to some random variables $(\widetilde{t}_1, \ldots, \widetilde{t}_k)$ whose joint probability measure is given by taking $y_{\ell}\to x_{\ell}$ for all $1\leq \ell \leq k$ in the above expression; more precisely, the joint probability density of $(\widetilde{t}_1, \ldots, \widetilde{t}_k)$ is given for $0 < x_{1} < \ldots < x_{k}$ by 
\begin{align*}
\mathbb{P}\big( \widetilde{t}_{\ell} \in (x_{\ell},x_{\ell}+dx_{\ell}) \mbox{ for all } 1 \leq \ell \leq k \big) = \mathcal{J}^{k}e^{-\frac{\mathcal{J}}{4}x_{k}^{4}}\prod_{\ell=1}^{k}x_{\ell}^{3}dx_{\ell},
\end{align*}
and integrating the above on the region $\{(x_{1},\ldots,x_{k-1}):0<x_{1}<\ldots<x_{k-1}<x_{k}\}$ yields
\begin{align*}
\mathbb{P}\big( \widetilde{t}_{k} \in (x_{k},x_{k}+dx_{k}) \big) = C_{k}^{-1}x_{k}^{4k-1}e^{-\frac{\mathcal{J}}{4}x_{k}^{4}}dx_{k},
\end{align*}
where $C_{k}=\int_{0}^{+\infty}x_{k}^{4k-1}e^{-\frac{\mathcal{J}}{4}x_{k}^{4}}dx_{k}=4^{k-1}\Gamma(k)/\mathcal{J}^{k}$.

It remains to prove that $\tilde{t}^{(n)}:=(\widetilde{t}^{(n)}_{1},\ldots,\widetilde{t}^{(n)}_{k})$ and $t^{(n)}:=(t^{(n)}_{1},\ldots,t^{(n)}_{k})$ have the same limiting distribution. For any Borel set $\mathcal{R}\subset \{(x_{1},\ldots,x_{k})\in (\R^{+})^{k}:x_{1}<\ldots<x_{k}\}$, we have
\begin{multline*}
|\PP(t^{(n)}\in \mathcal{R})-\PP(\tilde{t}^{(n)}\in \mathcal{R})| = |\E(\mathbbm{1}_{\{t^{(n)}\in \mathcal{R}\}}-\mathbbm{1}_{\{\tilde{t}^{(n)}\in \mathcal{R}\}})| \leq \E(|\mathbbm{1}_{\{t^{(n)}\in \mathcal{R}\}}-\mathbbm{1}_{\{\tilde{t}^{(n)}\in \mathcal{R}\}}|) \\ 
\leq \E(\mathbbm{1}_{\{t^{(n)} \neq \tilde{t}^{(n)}\}}) = \PP(t^{(n)} \neq \tilde{t}^{(n)}),
\end{multline*}
and therefore it is enough to prove that 
\begin{align}\label{lol23}
\lim_{n\to + \infty} \PP(t^{(n)} \neq \tilde{t}^{(n)}) = 0.
\end{align}
Let $p_{\ell}<q_{\ell}$ be the indices such that $n^{3/4}|z_{p_\ell} - z_{q_{\ell}}| = t^{(n)}_\ell$. We will prove that the $2k$ points $\{z_{p_\ell}, z_{q_\ell}: \ell = 1, \ldots, k\}$ are distinct with probability tending to $1$ as $n\to + \infty$, which in turn implies \eqref{lol23}.

If these $2k$ points are not distinct, then there must be three points $z_{i_1}, z_{i_2}, z_{i_3}$ such that $|z_{i_2} - z_{i_1}|, |z_{i_3} - z_{i_1}| \leq \widetilde{t}^{(n)}_kn^{-3/4}$, which we expect to be a rare event. We are thus led to consider the point process
\begin{align*}
\Xi^{(3)} = \sum_{z_{i_1}, z_{i_2}, z_{i_3}\text{ distinct}}\delta_{(z_{i_1}, z_{i_2}, z_{i_3})}
\end{align*}
as well as the set
\begin{align*}
\mathcal{B} = \{(z, x_1, x_2): z \in \C, |x_1-\lambda|\leq Mn^{{-3/4}}, |x_2-\lambda|\leq Mn^{{-3/4}}\}
\end{align*}
where $M>0$ is a constant. We then have the inequality
\begin{align}\label{lol24}
\PP(t^{(n)} \neq \tilde{t}^{(n)}) \leq \PP(\Xi^{(3)}(\mathcal{B})\neq 0) + \PP(\widetilde{t}^{(n)}_k> Mn^{-3/4}).
\end{align}
It follows from the proof of Lemma \ref{lemma2} that
\begin{align*}
\PP(\Xi^{(3)}(\mathcal{B})\neq 0) \leq \E(\Xi^{(3)}(\mathcal{B})) \to 0, \qquad \mbox{as } n \to + \infty.
\end{align*}
Hence, taking the limit $n\to + \infty$ in \eqref{lol24}, we find
\begin{align*}
\lim_{n\to + \infty}\PP(t^{(n)} \neq \tilde{t}^{(n)}) \leq \PP(\widetilde{t}_k> M).
\end{align*}
Since the above is valid for any $M>0$, taking $M\to + \infty$ yields \eqref{lol23}, as desired.
\end{proof}

\appendix

\section{Proof of Theorem \ref{thm:Charlier}}\label{appendix:Proof of edge}
In this appendix, $K>M>0$ are fixed constants, and $\epsilon_{n}:=K \sqrt{\log n}/\sqrt{n}$. We also let $z_{0}\in \partial \mathcal{S}$ and define 
\begin{align*}
z=z_{0}+ \mathrm{n}\frac{c_{0}\xi}{\sqrt{n}}, \qquad w = z_{0}+ \mathrm{n}\frac{c_{0}\eta}{\sqrt{n}}, \qquad c_{0}=\frac{\sqrt{2}}{\sqrt{\Delta Q(z_{0})/4}},
\end{align*}
where $\xi,\eta\in \C$ satisfy $|\xi|,|\eta| \leq M \sqrt{\log n}$ and $\mathrm{n}\in \{z\in \C:|z|=1\}$ denotes the outward pointing unit normal to $\partial \mathcal{S}$ at $z_{0}$. Recall from \eqref{def of Kn} that
\begin{align}\label{def of Kn 2}
K_n(z,w) =  e^{-\frac{1}{2}nQ(z)-\frac{1}{2}nQ(w)}\sum_{j = 0}^{n-1}p_{j,n}(z)\overline{p_{j,n}(w)},
\end{align}
where $p_{j,n}$ is defined via \eqref{ortho condition}. Let $\tau_{1} = 1-\epsilon_{n}$. By \cite[Proposition 2.3 with $\mathcal{K}=\emptyset$, $u=p_{j,n}$, $\tau = \tau_{1}$]{HW2021} (see also \cite[Lemma 3.7]{AmeurCronvall}), we have
\begin{align*}
e^{-\frac{1}{2}nQ(z)}|p_{j,n}(z)| \leq D_{1} n^{1/2} e^{-\frac{n}{2}(Q-\widehat{Q}_{\tau_{1}})(z)}, \qquad \mbox{for all } j \leq n-\lfloor \epsilon_{n}n \rfloor - 1, \; |\xi| \leq M\sqrt{\log n},
\end{align*}
for some constant $D_{1}>0$ independent of $z_{0}$, and where $\widehat{Q}_{\tau}$ is the solution to the obstacle problem
\begin{align*}
\widehat{Q}_{\tau}(z) := \sup\{q(z)| \; & q:\C\to \R\cup\{-\infty\} \mbox{ is subharmonic on }\C, \\
& q(z) \leq 2\tau \log |z|+\bigO(1) \mbox{ as } |z|\to\infty, \\
& q \leq Q \mbox{ on } \C\}.
\end{align*}
Let $\delta>0$ be a small but fixed constant. For $\tau \in [1-\delta,1+\delta]$, let $\mathcal{S}_{\tau}$ denote the support of the equilibrium measure associated with $\tau^{-1}Q$. By \cite[Lemma 2.5]{HW2021}, we have $\mathcal{S}_{\tau} \subset \mathcal{S}$ for all $\tau\in [1-\delta,1)$, and $\mathcal{S}_{1}=\mathcal{S}$. Since $Q$ is as in Theorem \ref{thm1}, we can choose $\delta>0$ sufficiently small so that the curves $\partial \mathcal{S}_{\tau}$ are real-analytically smooth for $\tau\in [1-\delta,1+\delta]$ (see \cite{S1991} or \cite[Theorem 6.2]{HS2002}). Hence, using also \cite[(5.5) and (5.6)]{HW2021}, we find
\begin{align*}
(Q-\widehat{Q}_{\tau_{1}})(z) \geq D_{2} \, \mathrm{dist}_{\C}(z,\partial \mathcal{S}_{\tau_{1}}) \geq D_{3}(c \, \epsilon_{n}-|z-z_{0}|)^{2} \geq D_{3}(cK-c_{0}M)^{2}\frac{\log n}{n},
\end{align*}
for all sufficiently large $n$, for all $|\xi| \leq M \sqrt{\log n}$, and for some constants $c,D_{2},D_{3}>0$ independent of $M$, $K$, and $z_{0}$. Hence, increasing $K$ if necessary, we get
\begin{align*}
e^{-\frac{1}{2}nQ(z)}|p_{j,n}(z)| \leq n^{-50}, \qquad \mbox{for all $n$ large enough, } j \leq n-\lfloor \epsilon_{n}n \rfloor - 1 \mbox{, and } z_{0}\in \partial \mathcal{S}.
\end{align*}
Substituting the above estimate in \eqref{def of Kn 2} directly yields
\begin{align}\label{lol31}
K_n(z,w) =  \bigO(n^{-100})+e^{-\frac{1}{2}nQ(z)-\frac{1}{2}nQ(w)}\sum_{j = n-\lfloor\epsilon_{n}n\rfloor}^{n-1}p_{j,n}(z)\overline{p_{j,n}(w)}, \qquad \mbox{as } n \to + \infty
\end{align}
uniformly for $|\xi|,|\eta|\leq M\sqrt{\log n}$ and $z_{0}\in \partial \mathcal{S}$. The remaining sum can be estimated using \cite[Theorem 1.3]{HW2021}. Let $\phi_{\tau}$ be the conformal map from $\mathcal{S}_{\tau}^{c}$ to $\{|z|>1\}$ normalized by $\phi_{\tau}(\infty)=\infty$ and $\phi_{\tau}'(\infty)>0$, and let $\mathcal{Q}_{\tau}$ denote  the bounded holomorphic function on $\mathcal{S}_{\tau}^{c}$ whose real part equals $Q$ on $\partial \mathcal{S}_{\tau}$, and whose imaginary part vanishes at $\infty$. Our assumptions on $Q$ also imply that $\phi_{\tau}$ and $\mathcal{Q}_{\tau}$ extend holomorphically on a compact neighborhood of $\overline{\mathcal{S}^{c}}$ for all $\tau \in [1-\delta,1]$, with $\phi_{\tau}$ staying conformal there (shrinking $\delta>0$ if necessary). By \cite[Theorem 1.3]{HW2021}, there exist a bounded holomorphic function $\mathcal{B}_{\tau}$ defined in a fixed neighborhood of $\overline{\mathcal{S}^{c}}$ such that 
\begin{align}
e^{-\frac{1}{2}nQ(z)}p_{n-j,n}(z) & = n^{1/4}\phi_{\tau_{n-j}}'(z)^{1/2}\phi_{\tau_{n-j}}(z)^{\tau_{n-j} \frac{n}{2}} e^{\frac{n}{2}(\mathcal{Q}_{\tau_{n-j}}-Q)(z)} \Big( \mathcal{B}_{\tau_{n-j}}(z)+\bigO(n^{-1}) \Big) \label{lol28}
\end{align}
as $n\to + \infty$ uniformly for $|\xi|\leq M\sqrt{\log n}$, $j \leq \lfloor\epsilon_{n}n\rfloor$ and $z_{0}\in \partial \mathcal{S}$, and where $\tau_{j}:=j/n$. In \eqref{lol28}, the branch is chosen so that $\phi_{\tau(j)}'(\infty)^{1/2}>0$. Moreover, \cite[Theorem 1.5]{HW2021} gives the following explicit expression for $\mathcal{B}_{\tau}$:
\begin{align}\label{lol35}
\mathcal{B}_{\tau} = 2^{-\frac{1}{4}}\pi^{-\frac{3}{4}}e^{H_{\tau}},
\end{align}
where $H_{\tau}$ is bounded and holomorphic in $\mathcal{S}_{\tau}^{c}$ and satisfies $\im H_{\tau}(\infty)=0$, as well as $\re H_{\tau} = \frac{1}{4}\log \frac{\Delta Q}{4}$ on $\partial \mathcal{S}_{\tau}$. (In \eqref{lol35}, the extra factor $2^{-\frac{1}{4}}$ comes from the fact that $m$ in \cite{HW2021} corresponds to $n/2$ here, and the factor $\pi^{-\frac{3}{4}}$ stems from the fact that the orthonormal polynomials in \cite{HW2021} are normalized with respect to $dA(z)=\frac{d^{2}z}{\pi}$.) By \cite[Theorem 6.2 with $\varpi_{s}$ independent of $s$]{HS2002}, the function $\tau \mapsto \phi_{\tau}^{-1}(1/z)$, and therefore also $\tau \mapsto \phi_{\tau}$, is smooth for $\tau$ in a fixed neighborhood of $\tau=1$. It then directly follows from the definitions of $\mathcal{Q}_{\tau}$ and $\mathcal{B}_{\tau}$ that these functions are real-analytic in $z$ and smooth for $\tau$ in a fixed neighborhood of $\tau=1$. Hence, by Taylor's formula, 
\begin{align}
& \phi_{\tau_{n-j}}'(z)^{1/2} \mathcal{B}_{\tau_{n-j}}(z)  = b_{0} + \frac{b_{1}\re \xi + b_{2}\im \xi}{\sqrt{n}}  +b_{3} \frac{j}{n} + \bigO \bigg( \frac{1+|\xi|^{2}}{n} + \frac{(1+|\xi|)j}{n^{3/2}} + \frac{j^{2}}{n^{2}} \bigg), \label{lol30} 
\end{align}
as $n\to + \infty$ uniformly for $|\xi|\leq M\sqrt{\log n}$, $j \leq \lfloor\epsilon_{n}n\rfloor$ and $z_{0}\in \partial \mathcal{S}$, where
\begin{align*}
& b_{0}=\phi_{1}'(z_{0})^{1/2}\mathcal{B}_{1}(z_{0}), & & b_{1}= c_{0}\partial_{\mathrm{n}}(\phi_{1}'^{1/2}\mathcal{B}_{1})(z_{0}), \\
& b_{2}= c_{0}\partial_{\mathrm{t}}(\phi_{1}'^{1/2}\mathcal{B}_{1})(z_{0}), & & b_{3} = - \partial_{\tau}(\phi_{\tau}'^{1/2}\mathcal{B}_{\tau})|_{\tau=1}(z_{0}),
\end{align*}
and where $\partial_{\mathrm{n}}$ and $\partial_{\mathrm{t}}$ denote the normal and tangential derivatives, respectively. By \eqref{lol35}, $|b_{0}|=|\phi_{1}'(z_{0})^{1/2}| \big(\frac{\Delta Q(z_{0})}{4\pi}\big)^{1/4}\pi^{-\frac{1}{2}}2^{-\frac{1}{4}}$. Using the notation $\breve{Q}_{\tau} = \tau \log |\phi_{\tau}| + \re \mathcal{Q}_{\tau}$, we write
\begin{align*}
\phi_{\tau}(z)^{\tau \frac{n}{2}} e^{\frac{n}{2}(\mathcal{Q}_{\tau}-Q)(z)} = e^{-\frac{n}{2}(Q-\breve{Q}_{\tau})(z)} e^{\frac{i n}{2}(\im \mathcal{Q}_{\tau}(z) + \tau \arg \phi_{\tau}(z))}.
\end{align*}
Since $e^{\frac{i n}{2}(\im \mathcal{Q}_{\tau}(s) + \tau \arg \phi_{\tau}(s))}$ is smooth for $\tau$ close to $1$ and $s$ close to $\partial \mathcal{S}$, in a similar way as \eqref{lol30} we infer that there exists $\{a_{j}\}_{j=1}^{20}\subset \R$ (which are smooth functions of $z_{0}\in \partial \mathcal{S}$) such that
\begin{align}
& e^{\frac{i n}{2}(\im \mathcal{Q}_{\tau}(z) + \tau \arg \phi_{\tau}(z))} = e^{a_{1} in  +  i (a_{2} \re \xi + a_{3} \im \xi) \sqrt{n} + i [a_{4}j + a_{5} (\re \xi)^{2} + a_{6}(\re \xi)(\im \xi) + a_{7} (\im \xi)^{2} ]} \nonumber \\
& \times e^{i(a_{8} \re \xi + a_{9} \im \xi) \frac{j}{\sqrt{n}}+ia_{10} \frac{j^{2}}{n}} e^{i \frac{a_{11} (\re \xi)^{3} + a_{12} (\re \xi)^{2}(\im \xi) + a_{13} (\re \xi)(\im \xi)^{2} + a_{14} (\im \xi)^{3}}{\sqrt{n}}} \nonumber \\
& \times e^{ i \big(a_{15}(\re \xi)^{2} + a_{16} (\re \xi)(\im \xi) + a_{17}(\im \xi)^{2}\big) \frac{j}{n} + i(a_{18} \re \xi + a_{19} \im \xi) \frac{j^{2}}{n^{3/2}} + ia_{20} \frac{j^{3}}{n^{2}}}  \nonumber   \\
& \times \bigg\{ 1 + \bigO \bigg( \frac{1+|\xi|^{4}}{n} + \frac{(1+|\xi|^{3})j}{n^{3/2}} + \frac{(1+|\xi|^{2})j^{2}}{n^{2}} + \frac{(1+|\xi|)j^{3}}{n^{5/2}} + \frac{j^{4}}{n^{3}} \bigg) \bigg\} \label{lol32}
\end{align}
as $n\to + \infty$ uniformly for $|\xi| \leq M \sqrt{\log n}$, $j \leq \lfloor\epsilon_{n}n\rfloor$ and $z_{0}\in \partial \mathcal{S}$. Note that the coefficients $a_{1},a_{4},a_{10},a_{20}$ will not play any role in the asymptotics of $K_{n}(z,w)$, as the corresponding terms will get canceled when considering the product
\begin{align*}
e^{\frac{i n}{2}(\im \mathcal{Q}_{\tau}(z) + \tau \arg \phi_{\tau}(z))}e^{-\frac{i n}{2}(\im \mathcal{Q}_{\tau}(w) + \tau \arg \phi_{\tau}(w))}.
\end{align*} 
The coefficients $a_{2},a_{3},a_{5},a_{6},a_{7},a_{11},a_{12},a_{13},a_{14}$ are also of no importance, as the corresponding terms are independent of $j$ and can therefore be suppressed by premultiplying $K_{n}(z,w)$ by an appropriate unimodular factor. So the only important coefficients are $a_{8},a_{9},a_{15},a_{16},a_{17},a_{18},a_{19}$.
Let $\mathrm{z}_{\tau}$ be the nearest point to $z_{0}$ such that $\mathrm{z}_{\tau} \in (z_{0}+\mathrm{n}\R)\cap \partial \mathcal{S}_{\tau}$, and let $\mathtt{n}_{\tau}$ be the outer unit normal to $\partial \mathcal{S}_{\tau}$ at $\mathrm{z}_{\tau}$ (thus $\mathrm{z}_{1}=z_{0}$ and $\mathtt{n}_{1}=\mathrm{n}$). The parametrization $\mathrm{z}_{\tau}=z_{0}+\lambda_{\tau}\mathrm{n}$, $\lambda_{\tau}\in \R$, together with the formulas
\begin{align}\label{lol38}
|\phi_{\tau}(\mathrm{z}_{\tau})|^{2}=1, \qquad \mathrm{n}_{\tau} = \frac{\frac{d}{dx}\phi_{\tau}^{-1}(x\phi(\mathrm{z}_{\tau}))|_{x=1}}{\big|\frac{d}{dx}\phi_{\tau}^{-1}(x\phi(\mathrm{z}_{\tau}))|_{x=1}\big|},
\end{align}
and the fact that $\phi_{\tau}$ is conformal all the way up to the boundary $\partial \mathcal{S}_{\tau}$, implies by the inverse function theorem that $\lambda_{\tau}, \mathrm{z}_{\tau}, \mathtt{n}_{\tau}$ are smooth functions of $\tau$ in a fixed neighborhood of $\tau=1$, and smooth in $z_{0}\in \partial \mathcal{S}$. Define $z_{j}=\mathrm{z}_{\tau_{n-j}}$ and $\mathrm{n}_{j}=\mathtt{n}_{\tau_{n-j}}$. From the above discussion, there exist coefficients $z_{0}^{(1)},z_{0}^{(2)},\mathrm{n}^{(1)} \in \C$ (which are smooth functions of $z_{0}\in \partial \mathcal{S}$) such that
\begin{align}\label{lol29}
z_{j} = z_{0} + z_{0}^{(1)} \frac{j}{n} + z_{0}^{(2)} \frac{j^{2}}{n^{2}} + \bigO\bigg( \frac{j^{3}}{n^{3}} \bigg), \qquad \mathrm{n}_{j} = \mathrm{n} + \mathrm{n}^{(1)} \frac{j}{n} + \bigO\bigg( \frac{j^{2}}{n^{2}} \bigg),
\end{align}
as $n\to + \infty$ uniformly for $j \leq \epsilon_{n}n$ and $z_{0}\in \partial \mathcal{S}$. By \cite[page 366]{HW2021}, $z_{0}^{(1)}=-\mathrm{n} \frac{2|\phi_{1}'(z_{0})|}{\Delta Q(z_{0})}$. 
Hence, 
\begin{align*}
z = z_{0} + \mathrm{n}\frac{c_{0}\xi}{\sqrt{n}} = \;  &  z_{j} + \mathrm{n}_{j} \bigg( c_{0} \frac{\xi}{\sqrt{n}} - \frac{z_{0}^{(1)}}{\mathrm{n}} \frac{j}{n} + \bigg[ -c_{0} \frac{\mathrm{n}^{(1)}}{\mathrm{n}}\xi \frac{j}{n^{3/2}} +  \frac{\mathrm{n}^{(1)}z_{0}^{(1)}-\mathrm{n} z_{0}^{(2)}}{\mathrm{n}^{2}} \frac{j^{2}}{n^{2}} \bigg] \bigg) \\
& + \bigO\bigg( \frac{j^{2}(1+|\xi|)}{n^{5/2}} + \frac{j^{3}}{n^{3}} \bigg)
\end{align*}
as $n\to + \infty$ uniformly for $|\xi| \leq M \sqrt{\log n}$, $j \leq \epsilon_{n}n$ and $z_{0}\in \partial \mathcal{S}$. Let $\mathcal{L}_{\tau}:=Q-\breve{Q}_{\tau}$. Along the boundary $\partial \mathcal{S}_{\tau}$, by \cite[page 341]{HW2021}, we have $\mathcal{L}_{\tau}=0$ and $\nabla \mathcal{L}_{\tau}=0$. A Taylor's expansion then shows that we also have $\partial_{\mathrm{n}}\partial_{\mathrm{t}}\mathcal{L} = \partial_{\mathrm{t}}^{2} \mathcal{L} = \partial_{\mathrm{t}}^{3} \mathcal{L} = 0$ along $\partial \mathcal{S}_{\tau}$. In particular,
\begin{align*}
\partial_{\mathrm{n}}^{2}\mathcal{L}_{\tau} = (\partial_{\mathrm{n}}^{2}+\partial_{\mathrm{t}}^{2})\mathcal{L}_{\tau} = \Delta Q.
\end{align*}
Therefore, using also \eqref{lol29} and the fact that all derivatives $\{\partial_{\mathrm{n}}^{\alpha_{1}}\partial_{\mathrm{t}}^{\alpha_{2}}\mathcal{L}_{\tau}(z_{j})\}_{\alpha_{1},\alpha_{2}\in \N}$ are smooth for $\tau \in [1-\delta,1]$ and $z_{0}\in \partial \mathcal{S}$, we obtain
\begin{align*}
\mathcal{L}_{\tau_{n-j}}(z_{j}+\mathrm{n}_{j}\lambda) & = \frac{\Delta Q(z_{j})}{2}(\re \lambda)^{2} + \frac{1}{6} \big\{ \partial_{\mathrm{n}}^{3}\mathcal{L}_{\tau_{n-j}}(z_{j}) (\re \lambda)^{3} + 3 \partial_{\mathrm{n}}^{2}\partial_{\mathrm{t}} \mathcal{L}_{\tau_{n-j}}(z_{j}) (\re \lambda)^{2}(\im \lambda) \\
& + 3 \partial_{\mathrm{n}}\partial_{\mathrm{t}}^{2} \mathcal{L}_{\tau_{n-j}}(z_{j}) (\re \lambda)(\im \lambda)^{2} \big\} + \bigO(|\lambda|^{4}) \\
& = \bigg(\frac{\Delta Q(z_{0})}{2} + \frac{z_{0}^{(1)}}{\mathrm{n}} \frac{\partial_{\mathrm{n}}Q(z_{0})}{2} \frac{j}{n}\bigg)(\re \lambda)^{2} + \frac{1}{6} \big\{ \partial_{\mathrm{n}}^{3}\mathcal{L}_{1}(z_{0}) (\re \lambda)^{3} + 3 \partial_{\mathrm{n}}^{2}\partial_{\mathrm{t}} \mathcal{L}_{1}(z_{0}) (\re \lambda)^{2}(\im \lambda) \\
& + 3 \partial_{\mathrm{n}}\partial_{\mathrm{t}}^{2} \mathcal{L}_{1}(z_{0}) (\re \lambda)(\im \lambda)^{2} \big\} + \bigO\bigg(|\lambda|^{2} \frac{j^{2}}{n^{2}} + |\lambda|^{4}\bigg)
\end{align*}
as $\lambda \to 0$ uniformly for $n$ large, $j \leq \lfloor\epsilon_{n}n\rfloor$ and $z_{0}\in \partial \mathcal{S}$. Using the above expansion with
\begin{align*}
\lambda = c_{0} \frac{\xi}{\sqrt{n}} - \frac{z_{0}^{(1)}}{\mathrm{n}} \frac{j}{n} + \bigg[ -c_{0} \frac{\mathrm{n}^{(1)}}{\mathrm{n}}\xi \frac{j}{n^{3/2}} +  \frac{\mathrm{n}^{(1)}z_{0}^{(1)}-\mathrm{n} z_{0}^{(2)}}{\mathrm{n}^{2}} \frac{j^{2}}{n^{2}} \bigg] + \bigO\bigg( \frac{j^{2}(1+|\xi|)}{n^{5/2}} + \frac{j^{3}}{n^{3}} \bigg)
\end{align*}
yields
\begin{align}
& e^{-\frac{n}{2}\mathcal{L}_{\tau_{n-j}}(z)}  = \exp \bigg( -\Big(c_{0} \re \xi - \frac{j}{\sqrt{n}}\frac{z_{0}^{(1)}}{\mathrm{n}}\Big)^{2} \frac{\Delta Q(z_{0})}{4} \bigg) \bigg\{ 1 + \big[ \alpha_{1} (\re \xi)^{3} + \alpha_{2} (\re \xi)^{2}(\im \xi) \nonumber \\
&  + \alpha_{3} (\im \xi)^{2}(\re \xi) \big] \frac{1}{\sqrt{n}} + \big[ \alpha_{4} (\re \xi)^{2} + \alpha_{5} (\re \xi)(\im \xi) + \alpha_{6} (\im \xi)^{2} \big] \frac{j}{n} + \big[ \alpha_{7} \re \xi + \alpha_{8} \im \xi \big] \frac{j^{2}}{n^{3/2}} \nonumber \\
& + \alpha_{9} \frac{j^{3}}{n^{2}} + \bigO \bigg( \sum_{\ell=0}^{6} \frac{(1+|\xi|^{6-\ell})(j/\sqrt{n})^{\ell}}{n} \bigg) \bigg\} \label{lol33}
\end{align}
as $n\to + \infty$ uniformly for $|\xi| \leq M \sqrt{\log n}$, $j \leq \lfloor\epsilon_{n}n\rfloor$ and $z_{0}\in \partial \mathcal{S}$, for some $\alpha_{1},\ldots,\alpha_{9}\in \R$ that depend smoothly on $z_{0}$. Let $\tilde{c}_{n}(\xi)=\tilde{c}_{n}(\xi;z_{0})$ be the unimodular factor given by
\begin{align}\label{def of cn unimod}
\tilde{c}_{n}(\xi) & = e^{-i (a_{2} \re \xi + a_{3} \im \xi) \sqrt{n} - i [a_{5} (\re \xi)^{2} + a_{6}(\re \xi)(\im \xi) + a_{7} (\im \xi)^{2} ]} \nonumber \\
& \times e^{-i \frac{a_{11} (\re \xi)^{3} + a_{12} (\re \xi)^{2}(\im \xi) + a_{13} (\re \xi)(\im \xi)^{2} + a_{14} (\im \xi)^{3}}{\sqrt{n}}} \times e^{-2i \, \re \xi \, \im \xi}.
\end{align}
By combining \eqref{lol31}, \eqref{lol28}, \eqref{lol30}, \eqref{lol32} and \eqref{lol33}, we obtain
\begin{align}
& \tilde{c}_{n}(\xi)\overline{\tilde{c}_{n}(\eta)}K_{n}(z,w) = \bigO(n^{-100})+e^{-\frac{1}{2}nQ(z)-\frac{1}{2}nQ(w)}\sum_{j = 1}^{\lfloor\epsilon_{n}n\rfloor}p_{n-j,n}(z)\overline{p_{n-j,n}(w)} \nonumber \\
& = \bigO(n^{-100})+ \sum_{j = 1}^{\lfloor\epsilon_{n}n\rfloor} \sqrt{n}f_{0}(\tfrac{j}{\sqrt{n}}) \bigg\{ 1 + \frac{1}{\sqrt{n}}f_{1}(\tfrac{j}{\sqrt{n}}) +  \bigO \bigg( \sum_{\ell=0}^{6} \frac{(1+|\xi|^{6-\ell}+|\eta|^{6-\ell})(j/\sqrt{n})^{\ell}}{n} \bigg) \bigg\} \label{lol41}
\end{align}
as $n\to + \infty$ uniformly for $|\xi|,|\eta|\leq M\sqrt{\log n}$ and $z_{0}\in \partial \mathcal{S}$, with
\begin{align*}
& f_{0}(x) = \frac{|\phi_{1}'(z_{0})|}{\pi \sqrt{2}}\bigg( \frac{\Delta Q(z_{0})}{4\pi} \bigg)^{\frac{1}{2}} e^{2i \, (\im \eta \, \re \eta-\im \xi \, \re \xi)} \exp \bigg\{ (1+\tilde{a}_{8}^{2})\big(2 \, \re \xi \, \re \eta-(\re \xi)^{2}-(\re \eta)^{2}\big)  \\
& + \tilde{a}_{9}^{2}\big( 2 \, \im \xi \, \im \eta - (\im \xi)^{2} - (\im \eta)^{2} \big) - 2 \tilde{a}_{8}\tilde{a}_{9} \, \re(\xi-\eta) \, \im(\xi-\eta) - 2i \bigg[ \tilde{a}_{8} \, \re(\xi+\eta) \re(\xi-\eta) \\
& + \tilde{a}_{9} \, \re(\xi + \eta) \im(\xi-\eta) \bigg] \bigg\} \exp \bigg\{ -\bigg( \frac{\sqrt{2}|\phi_{1}'(z_{0})|x}{\sqrt{\Delta Q(z_{0})}} + \re(\xi+\eta) - i \tilde{a}_{8} \re(\xi-\eta) - i \tilde{a}_{9} \im(\xi-\eta) \bigg)^{2} \bigg\} \\
& f_{1}(x) = \alpha_{1}(\re \xi)^{3}+\alpha_{1}(\re \eta)^{3} + \alpha_{2}(\re \xi)^{2}\im \xi + \alpha_{2}(\re \eta)^{2}\im\eta + \alpha_{3} (\re \xi)(\im \xi)^{2} + \alpha_{3} (\re \eta) (\im \eta)^{2} \\
& + \hat{\beta}_{4}x(\re \xi)^{2}+ \overline{\hat{\beta}_{4}}x(\re \eta)^{2} + \hat{\beta}_{5}x(\re \xi)(\im \xi) + \overline{\hat{\beta}_{5}}x(\re \eta)(\im \eta) + \hat{\beta}_{6}x(\im \xi)^{2}+ \overline{\hat{\beta}_{6}}x(\im \eta)^{2} \\
& + (\hat{\beta}_{7}+x^{2}\hat{\beta}_{8})\re \xi + (\overline{\hat{\beta}_{7}}+x^{2}\overline{\hat{\beta}_{8}})\re \eta  + (\hat{\beta}_{9}+x^{2}\hat{\beta}_{10})\im \xi + (\overline{\hat{\beta}_{9}}+x^{2}\overline{\hat{\beta}_{10}})\im \eta + \hat{\gamma}_{1} x + \hat{\gamma}_{2} x^{3} 
\end{align*}
for some $\hat{\beta}_{4},\ldots,\hat{\beta}_{10}\in \C$ and $\hat{\gamma}_{1},\hat{\gamma}_{2}\in \R$ that depend smoothly on $z_{0}$, and where 
\begin{align*}
\tilde{a}_{j} = a_{j} \frac{\sqrt{\Delta Q(z_{0})}}{2\sqrt{2}|\phi_{1}'(z_{0})|}, \qquad j=8,9.
\end{align*}
By the Euler–Maclaurin formula and the fact that $f_{0}(x),f_{0}'(x),f_{0}''(x)$ are $\bigO(e^{-cx^{2}})$ as $x\to + \infty$ for some $c>0$, we can choose $K$ sufficiently small such that
\begin{align}\label{lol34}
\sum_{j = 1}^{\lfloor\epsilon_{n}n\rfloor} \sqrt{n}f_{0}(\tfrac{j}{\sqrt{n}}) = n \int_{0}^{+\infty}f_{0}(x)dx - \frac{1}{2} f_{0}(0) \sqrt{n} + \bigO(1+|f_{0}'(0)|), \qquad \mbox{as } n \to + \infty
\end{align}
uniformly for $|\xi|,|\eta|\leq M\sqrt{\log n}$ and $z_{0}\in \partial \mathcal{S}$. On the other hand, we know from \cite[Corollary 1.7]{HW2021} that the leading term of $K_{n}(z,w)$ is equal to $\frac{\Delta Q(z_{0})n}{4\pi}\mathrm{k}(\xi,\eta)$ (up to unimodular factors). To reconcile \cite[Corollary 1.7]{HW2021} with \eqref{lol35} and \eqref{lol34}, it is easy to check that we must have $\tilde{a}_{8}=0$ and $\tilde{a}_{9}=-1$, and therefore $f_{0}$ simplifies to
\begin{align*}
& f_{0}(x) = \frac{|\phi_{1}'(z_{0})|}{\pi \sqrt{2}}\bigg( \frac{\Delta Q(z_{0})}{4\pi} \bigg)^{\frac{1}{2}} e^{2\xi \overline{\eta}-(|\xi|^{2}+|\eta|^{2})} \exp \bigg\{ -\bigg( \frac{\sqrt{2}|\phi_{1}'(z_{0})|x}{\sqrt{\Delta Q(z_{0})}} + \xi + \overline{\eta} \bigg)^{2}\bigg\}.
\end{align*}
In particular, 
\begin{align}\label{lol42}
-\frac{1}{2}f_{0}(0) = \hat{\gamma}_{3} \frac{e^{-2(\re \xi)^{2}-2(\re \eta)^{2}}}{e^{2i(\re \xi \, \im \xi - \re \eta \, \im \eta)}}, \qquad |f_{0}'(0)| \leq C |\xi + \overline{\eta}|e^{-2(\re \xi)^{2}-2(\re \eta)^{2}},
\end{align}
for some $\hat{\gamma}_{3}\in \R$ that depends smoothly on $z_{0}$ and some $C>0$ that is independent of $z_{0}$. Using the change of variables $y=\frac{\sqrt{2}|\phi_{1}'(z_{0})|x}{\sqrt{\Delta Q(z_{0})}}$, we obtain
\begin{align}\label{change of var x to y}
f_{0}(x)dx = \frac{\Delta Q(z_{0})}{4\pi} e^{2\xi \overline{\eta}-(|\xi|^{2}+|\eta|^{2})} \frac{1}{\sqrt{\pi}} e^{-(y+\xi+\overline{\eta})^{2}}dy,
\end{align}
and therefore, 
\begin{align}
& \sum_{j = 1}^{\lfloor\epsilon_{n}n\rfloor} f_{0}(\tfrac{j}{\sqrt{n}})f_{1}(\tfrac{j}{\sqrt{n}}) = \sqrt{n} \int_{0}^{+\infty}f_{0}(x)f_{1}(x)dx + \bigO((f_{0}f_{1})'(0)) \nonumber \\
& = \bigg[\Big( \tilde{\beta}_{1} (\re \xi)^{3} + \tilde{\beta}_{1} (\re \eta)^{3} + \tilde{\beta}_{2} (\re \xi)^{2} \im \xi + \tilde{\beta}_{2} (\re \eta)^{2} \im \eta + \tilde{\beta}_{3} \re \xi (\im \xi)^{2}+ \tilde{\beta}_{3} \re \eta (\im \eta)^{2} \nonumber \\
& + \tilde{\beta}_{4} \re \xi + \overline{\tilde{\beta}_{4}} \re \eta + \tilde{\beta}_{5} \im \xi + \overline{\tilde{\beta}_{5}} \im \eta \Big) \mathrm{k}(\xi,\eta) + \Big( \tilde{\beta}_{6} (\re \xi)^{2} + \overline{\tilde{\beta}_{6}} (\re \eta)^{2} + \tilde{\beta}_{7} (\re \xi)(\im \xi) \nonumber \\
&  + \overline{\tilde{\beta}_{7}} (\re \eta)(\im \eta) + \tilde{\beta}_{8} (\im \xi)^{2} + \overline{\tilde{\beta}_{8}} (\im \eta)^{2} + \tilde{\gamma}_{1} \Big) \Big( (\xi + \overline{\eta})\mathrm{k}(\xi,\eta) - \frac{e^{-2 (\re \xi)^{2}-2 (\re \eta)^{2}}}{2\sqrt{\pi}e^{2i(\re \xi \, \im \xi - \re \eta \, \im \eta)}} \Big) \nonumber \\
& + \tilde{\gamma}_{2} (\xi + \overline{\eta})\mathrm{k}(\xi,\eta) + \Big( \tilde{\beta}_{9} \re \xi + \overline{\tilde{\beta}_{9}} \re \eta + \tilde{\beta}_{10} \im \xi + \overline{\tilde{\beta}_{10}} \im \eta \Big) \bigg\{ (\xi + \overline{\eta})^{2}\mathrm{k}(\xi,\eta) \nonumber \\
& - \frac{(\xi + \overline{\eta})e^{-2 (\re \xi)^{2}-2 (\re \eta)^{2}}}{2\sqrt{\pi}e^{2i(\re \xi \, \im \xi - \re \eta \, \im \eta)}} \bigg\} + \tilde{\gamma}_{3} \bigg\{ (\xi + \overline{\eta})^{3}\mathrm{k}(\xi,\eta) - \frac{(\xi + \overline{\eta})^{2}e^{-2 (\re \xi)^{2}-2 (\re \eta)^{2}}}{2\sqrt{\pi}e^{2i( \re \xi \, \im \xi - \re \eta \, \im \eta)}} \bigg\}\bigg]\sqrt{n} \nonumber \\
& + \bigO\big( 1+|\xi|^{4}+|\eta|^{4} \big)  \label{lol39}
\end{align}
as $n\to + \infty$ uniformly for $|\xi|,|\eta|\leq M \sqrt{\log n}$ and $z_{0}\in \partial \mathcal{S}$, and where the coefficients $\tilde{\beta}_{4},\ldots,\tilde{\beta}_{10}\in \C$ and $\tilde{\beta}_{1},\tilde{\beta}_{2},\tilde{\beta}_{3},\tilde{\gamma}_{1},\tilde{\gamma}_{2},\tilde{\gamma}_{3} \in \R$ are smooth functions of $z_{0}$ that can be explicitly expressed in terms of $\alpha_{1},\alpha_{2},\alpha_{3},\hat{\beta}_{4},\ldots,\hat{\beta}_{10}, \hat{\gamma}_{1},\hat{\gamma}_{2}$. The identity \eqref{change of var x to y} implies that $\int_{0}^{+\infty}f_{0}(x)dx = \frac{\Delta Q(z_{0})}{4\pi}\mathrm{k}(\xi,\eta)$. Using that
\begin{align}\label{lol40}
\sum_{j = 1}^{\lfloor\epsilon_{n}n\rfloor} \sqrt{n}f_{0}(\tfrac{j}{\sqrt{n}}) \bigg( \sum_{\ell=0}^{6} \frac{(1+|\xi|^{6-\ell})(j/\sqrt{n})^{\ell}}{n} \bigg)  = \bigO\big( 1+|\xi|^{6}+|\eta|^{6} \big), \qquad \mbox{as } n \to + \infty
\end{align}
uniformly for $|\xi|,|\eta| \leq M \sqrt{\log n}$ and $z_{0}\in \partial \mathcal{S}$, by combining \eqref{lol41}, \eqref{lol34}, \eqref{lol42}, \eqref{lol39} and \eqref{lol40}, we obtain
\begin{multline}
\tilde{c}_{n}(\xi)\overline{\tilde{c}_{n}(\eta)}K_{n}(z,w) = n \frac{\Delta Q(z_{0})}{4\pi}\mathrm{k}(\xi,\eta) \\ + \bigg(\int_{0}^{+\infty}f_{0}(x)f_{1}(x)dx-\frac{1}{2}f_{0}(0)\bigg) \sqrt{n} + \bigO\big( 1+|\xi|^{6}+|\eta|^{6} \big) \label{lol43}
\end{multline}
as $n\to + \infty$ uniformly for $|\xi|,|\eta| \leq M \sqrt{\log n}$ and $z_{0}\in \partial \mathcal{S}$. Let $\hat{c}_{n}(\xi)=\hat{c}_{n}(\xi;z_{0})$ be the unimodular function defined by
\begin{align*}
\hat{c}_{n}(\xi)=e^{-\frac{i}{\sqrt{n}}(\frac{\Delta Q(z_{0})}{4\pi})^{-1}\big( \im \tilde{\beta}_{4} \, \re \xi + \im \tilde{\beta}_{5} \, \im \xi \big)},
\end{align*}
and set $c_{n}(\xi):=\tilde{c}_{n}(\xi)\hat{c}_{n}(\xi)$. By multiplying both sides of \eqref{lol43} by $\hat{c}_{n}(\xi)\overline{\hat{c}_{n}(\eta)}$, we find that the term of order $\sqrt{n}$ in the large $n$ asymptotics of $c_{n}(\xi)\overline{c_{n}(\eta)}K_{n}(z,w)$ is independent of $\im \tilde{\beta}_{4},\im \tilde{\beta}_{5}$, and can therefore be rewritten in the form $\mathrm{k}_{2}(\xi,\eta;z_{0}) \sqrt{n}$, where $\mathrm{k}_{2}(\xi,\eta;z_{0})$ is as in Theorem \ref{thm:Charlier}. 

\section{Consistency checks for Theorem \ref{thm:Charlier}}\label{appendix:consistency checks}

In this section, we provide two consistency checks for Theorem \ref{thm:Charlier}.

\subsection{Elliptic Ginibre kernel}

The elliptic Ginibre kernel $K_{n}(z,w)$ is given by \eqref{def of Kn} with $Q(z)=\frac{1}{1-\tau^{2}}(|z|^2-\tau \re z^{2})$ for some $\tau \in [0,1)$. In this setting, the subleading term in the large $n$ asymptotics of $K_{n}$ near the edge is explicitly known: the case $\xi=\eta$ was first established in \cite{LR2016}, then extended in \cite[Proposition 3.5]{ByunEbke} to distinct but bounded $\xi,\eta$, and further generalized in \cite[Proposition 5.1]{Molag2023} to the regime of mildly growing $\xi$ and $\eta$. These results allow for a direct verification of Theorem \ref{thm:Charlier} in the elliptic Ginibre case. Let $c_{n}$ be the unimodular factor from \cite[Proposition 5.1]{Molag2023}, and let $\tilde{c}_{n}$ be given by
\begin{align*}
\tilde{c}_{n}(\xi) = \tilde{c}_{n}(\xi;z_{0})  = e^{\frac{2\pi i \tilde{\kappa}}{\sqrt{n}}\big((\im \xi)^{3}-3 \, \im \xi \, (\re \xi)^{2}\big)}, \qquad \tilde{\kappa} := \frac{\sqrt{2}}{3\pi}\kappa,
\end{align*}
where $\kappa=\kappa(z_{0})$ is the curvature of $\partial \mathcal{S}$ at $z_{0}$. Using \cite[Proposition 5.1 with $u=\sqrt{2}\xi$ and $v=\sqrt{2}\eta$]{Molag2023}, and multiplying both sides by $\tilde{c}_{n}(\xi)\overline{\tilde{c}_{n}(\eta)}$, yields 
\begin{multline}
c_{n}(\xi)\overline{c_{n}(\eta)}\tilde{c}_{n}(\xi)\overline{\tilde{c}_{n}(\eta)}K_{n}\bigg(z_{0}+\mathrm{n} \frac{\sqrt{2}\xi}{\sqrt{\Delta Q(z_{0})n/4}},  z_{0}+\mathrm{n} \frac{\sqrt{2}\eta}{\sqrt{\Delta Q(z_{0})n/4}}\bigg) \\
= \frac{n}{\pi}\mathrm{k}(\xi,\eta) + \tilde{\kappa}\sqrt{n}\bigg\{ \frac{e^{-2 (\re \xi)^{2}-2 (\re \eta)^{2}}}{2\sqrt{\pi}e^{2i(\re \xi \, \im \xi - \re \eta \, \im \eta)}}\big( 2\xi^{2}+2\overline{\eta}^{2} - 2\xi \overline{\eta} -1 \big) \\ + 2i \mathrm{k}(\xi,\eta)\big( (\im \xi)^{3} - (\im \eta)^{3} + 3 (\re \eta)^{2} \im \eta - 3(\re \xi)^{2}\im \xi \big) \bigg\} + o(n^{-1+\epsilon}) \label{lol36}
\end{multline}
as $n\to + \infty$ uniformly for $|\xi|,|\eta| \leq M\sqrt{\log n}$, and where $\epsilon>0$ and $M>0$ are arbitrary but fixed. It is now straightforward to check that the subleading term in \eqref{lol36} can be written as $\mathrm{k}_{2}(\xi,\eta;z_{0})\sqrt{n}$, where $\mathrm{k}_{2}(\xi,\eta;z_{0})$ is as in Theorem \ref{thm:Charlier} with $\gamma_{2} = \gamma_{4} = \gamma_{5} = \gamma_{6} = \beta_{4} = \beta_{5} = 0$ and
\begin{align*}
& \gamma_{1} = 2\tilde{\kappa}, \quad \gamma_{3} = - 6 \tilde{\kappa}, \quad \gamma_{8} = - \gamma_{7} = \tilde{\kappa}, \quad \beta_{3} = -\beta_{1} = 3\tilde{\kappa}, \quad \beta_{2} = -6i\tilde{\kappa}.
\end{align*}

\subsection{Kernels of rotation invariant potentials}

Let $K_{n}(z,w)$ be given by \eqref{def of Kn}, where $Q$ is as in Theorem \ref{thm1} and such that $Q(z) = Q(|z|)$ for all $z\in \C$. The result \cite[Theorem 1.11 with $N=0$, $b_{N}=z_{0}$, and with $t$ replaced by $2t$]{ACC2022} yields the following asymptotics for the kernel along the diagonal $\xi = \eta = t \in \R$:
\begin{multline}
K_{n}\bigg(z_{0}+\mathrm{n} \frac{\sqrt{2}t}{\sqrt{\Delta Q(z_{0})n/4}},  z_{0}+\mathrm{n} \frac{\sqrt{2}t}{\sqrt{\Delta Q(z_{0})n/4}}\bigg) = \frac{n \Delta Q(z_{0})}{4\pi} \mathrm{k}(t,t) - \frac{\sqrt{n \Delta Q(z_{0})}}{12\pi z_{0} \sqrt{2}} \\
\times \bigg\{ \frac{e^{-4 t^{2}}}{2\sqrt{\pi}} \bigg( \frac{\partial_{\mathrm{n}}\Delta Q(z_{0})}{\Delta Q(z_{0})} z_{0}(5+8t^{2}) + 4-8t^{2} \bigg) - 12 z_{0} \frac{\partial_{\mathrm{n}}\Delta Q(z_{0})}{\Delta Q(z_{0})} t \, \mathrm{k}(t,t) \bigg\} + \bigO\big((\log n)^{4}\big) \label{lol37}
\end{multline}
as $n\to + \infty$ uniformly for $|t| \leq \log n$. It is easy to check that the subleading term in \eqref{lol37} can be written as $\mathrm{k}_{2}(\xi,\eta;z_{0})\sqrt{n}$, where $\mathrm{k}_{2}(\xi,\eta;z_{0})$ is as in Theorem \ref{thm:Charlier} with $\beta_{1}=\beta_{2}=\beta_{3}=\beta_{4}=\beta_{5} = \gamma_{2} = \gamma_{3} = \gamma_{4} = \gamma_{5} = 0$ and
\begin{align*}
& \gamma_{6} = \frac{\partial_{\mathrm{n}}\Delta Q(z_{0})}{2\pi \sqrt{2 \Delta Q(z_{0})}}, & & \gamma_{7} = -\frac{4 \Delta Q(z_{0}) + 5 z_{0}\partial_{\mathrm{n}}\Delta Q(z_{0})}{12\pi z_{0}\sqrt{2\Delta Q(z_{0})}}, & & \gamma_{8} = -\frac{\gamma_{1}}{4} = \frac{z_{0}\partial_{\mathrm{n}}\Delta Q(z_{0}) - \Delta Q(z_{0})}{6\pi z_{0}\sqrt{2\Delta Q(z_{0})}}.
\end{align*}

\section{The function $\mathrm{k}(\xi,\eta)$ is bounded for $\xi,\eta\in \C$}\label{appendix:k bounded}

Recall from Theorem \ref{thm:Charlier} that $\mathrm{k}$ is given by
\begin{align}\label{lol52}
\mathrm{k}(\xi,\eta) = e^{2\xi \overline{\eta}-(|\xi|^{2}+|\eta|^{2})}\frac{\mathrm{erfc}(\xi+\overline{\eta})}{2}.
\end{align}
By \cite[7.12.1]{NIST}, as $\xi+\overline{\eta} \to \infty$ with $|\arg (\xi+\overline{\eta})| \leq \frac{2\pi}{3}$, we have
\begin{align*}
& \erfc (\xi+\overline{\eta}) = e^{-(\re \xi + \re \eta)^{2}}e^{(\im \xi-\im \eta)^{2}}\frac{e^{-2i (\im\xi - \im \eta)(\re \xi + \re \eta)}}{\sqrt{\pi}(\xi+\overline{\eta})}\bigg(1+\bigO\bigg(\frac{1}{(\xi+\overline{\eta})^{2}}\bigg)\bigg),  
\end{align*}
while if $\xi+\overline{\eta} \to \infty$ with $\frac{\pi}{3} \leq |\arg (\xi+\overline{\eta})| \leq \pi$, then
\begin{align*}
& \erfc (\xi+\overline{\eta}) = 2+ e^{-(\re \xi + \re \eta)^{2}}e^{(\im \xi-\im \eta)^{2}}\frac{e^{-2i (\im\xi - \im \eta)(\re \xi + \re \eta)}}{\sqrt{\pi}(\xi+\overline{\eta})}\bigg(1+\bigO\bigg(\frac{1}{(\xi+\overline{\eta})^{2}}\bigg)\bigg).
\end{align*}
Since $|e^{2\xi \overline{\eta}-(|\xi|^{2}+|\eta|^{2})}|=e^{-(\re \xi-\re \eta)^{2}-(\im \xi-\im \eta)^{2}}$ and
\begin{align*}
|e^{2\xi \overline{\eta}-(|\xi|^{2}+|\eta|^{2})}|e^{-(\re \xi + \re \eta)^{2}}e^{(\im \xi-\im \eta)^{2}} = e^{-2(\re \xi)^{2}-2(\re \eta)^{2}},
\end{align*}
we thus have
\begin{align}\label{lol53}
\mathrm{k}(\xi,\eta) = \bigO \bigg( e^{-(\re \xi-\re \eta)^{2}-(\im \xi-\im \eta)^{2}} + \frac{e^{-2(\re \xi)^{2}-2(\re \eta)^{2}}}{|\xi+\overline{\eta}|} \bigg), \qquad \mbox{as } \xi + \overline{\eta} \to \infty
\end{align}
in any sector. Let $M_{1},M_{2}>0$ be some constants. It follows from \eqref{lol53} that $M_{1}$ can be chosen large enough so that $\mathrm{k}(\xi,\eta)$ remains bounded for $\{(\xi,\eta)\in \C^{2}: |\xi+\overline{\eta}|\geq M_{1}\}$. On the other hand, if $\xi, \eta \to \infty$ in such a way that $\xi + \overline{\eta}$ remains bounded, then, by \eqref{lol52} and $|e^{2\xi \overline{\eta}-(|\xi|^{2}+|\eta|^{2})}|=e^{-(\re \xi-\re \eta)^{2}-(\im \xi-\im \eta)^{2}}$, we have
\begin{align*}
\mathrm{k}(\xi,\eta) = \bigO(e^{-(\re \xi-\re \eta)^{2}-(\im \xi-\im \eta)^{2}}).
\end{align*}
Thus we can choose $M_{2}>0$ large enough so that $\mathrm{k}(\xi,\eta)$ remains bounded on $\{(\xi,\eta)\in \C^{2}: |\xi|\geq M_{2}, |\xi+\overline{\eta}|\leq M_{1}\}$. Finally, since $\mathrm{k}(\xi,\eta)$ is smooth on the compact $\{(\xi,\eta)\in \C^{2}: |\xi|\leq M_{2}, |\xi+\overline{\eta}|\leq M_{1}\}$, it is also bounded there. Hence $\mathrm{k}(\xi,\eta)$ is bounded for $\xi,\eta\in \C$.

 
\paragraph{Acknowledgements.} 
The author is grateful to Yacin Ameur, Sung-Soo Byun, Leslie Molag and Aron Wennman for useful discussions. The author is a Research Associate of the Fonds de la Recherche Scientifique - FNRS, and also acknowledges support from the European Research Council (ERC), Grant Agreement No. 101115687. 

\footnotesize

\end{document}